\documentclass[a4paper,10pt]{amsart}

% packages:
\usepackage[english]{babel}
\usepackage[utf8]{inputenc}

\usepackage{mathrsfs}
\usepackage{mathtools}
\usepackage{dsfont}
\usepackage{amssymb}
\usepackage{amsmath}
\usepackage{enumerate}
\usepackage{amsfonts}
\usepackage{amsthm,amsmath}
\usepackage{bbm}
\usepackage{color}
\usepackage{hyperref}

\numberwithin{equation}{section}
% shortcuts for symbols:
\newcommand{\C}{\mathds{C}}
\newcommand{\N}{\mathds{N}}
\newcommand{\Q}{\mathds{Q}}
\newcommand{\R}{\mathds{R}}

\newcommand{\Z}{\mathds{Z}}

\newcommand{\cK}{\mathscr{K}}
\newcommand{\cL}{\mathscr{L}}

\newcommand{\cT}{\mathscr{T}}
\newcommand{\cS}{\mathscr{S}}

\newcommand{\calL}{\mathscr{L}}

\newcommand{\applied}[2]{\langle #1,#2\rangle}
\DeclarePairedDelimiter\norm{\lVert}{\rVert}
\DeclarePairedDelimiter\abs{\lvert}{\rvert}
\newcommand{\argument}{\,\cdot\,}
\renewcommand{\phi}{\varphi}
\newcommand{\colonequiv}{\mathrel{\mathop:}\Leftrightarrow}

\DeclareMathOperator{\lin}{span}
\newcommand{\dx}{\:\mathrm{d}}
\newcommand{\eps}{\varepsilon}

\DeclareMathOperator{\id}{id}

\definecolor{mygreen}{rgb}{0.1,0.75,0.2}

% theorems, etc.:
\theoremstyle{definition}
\newtheorem{definition}{Definition}[section]
\newtheorem{remark}[definition]{Remark}

\newtheorem{example}[definition]{Example}

\theoremstyle{plain}
\newtheorem{proposition}[definition]{Proposition}
\newtheorem{lemma}[definition]{Lemma}
\newtheorem{theorem}[definition]{Theorem}
\newtheorem{corollary}[definition]{Corollary}
\newtheorem*{theorem_no_number}{Theorem}

\begin{document}

\title{Convergence of positive operator semigroups}
\author{Moritz Gerlach}
\address{Moritz Gerlach\\Universit\"at Potsdam\\Institut f\"ur Mathematik\\Karl-Liebknecht-Stra{\ss}e 24--25\\14476 Potsdam\\Germany}
\email{moritz.gerlach@uni-potsdam.de}
\author{Jochen Gl\"uck}
\address{Jochen Gl\"uck\\Universit\"at Ulm\\Institut f\"ur Angewandte Analysis\\89069 Ulm, Germany}
\email{jochen.glueck@alumni.uni-ulm.de}

\keywords{Positive semigroups, semigroup representations, asymptotic behavior, kernel operator}
\subjclass[2010]{Primary 47D03; Secondary: 20M30, 47B65, 47B34}
\date{\today}
\begin{abstract}
	We present new conditions for semigroups of positive operators to converge strongly as time tends to infinity.
	Our proofs are based on a novel approach combining the well-known splitting theorem by Jacobs, de Leeuw and Glicksberg 
	with a purely algebraic result about positive group representations. 
	Thus we obtain convergence theorems not only for one-parameter semigroups but for a much larger class of semigroup representations.
	
	Our results allow for a unified treatment of various theorems from the literature 
	that, under technical assumptions, a bounded positive $C_0$-semigroup containing or dominating a kernel operator
	converges strongly as $t \to \infty$. We gain new insights into the structure theoretical background of those theorems and generalise 
	them in several respects; especially we drop any kind of continuity or regularity assumption with respect to the time parameter.
\end{abstract}

\maketitle

\section{Introduction} \label{section:introduction}

One of the most important aspects in the study of linear autonomous evolution equations is the long-term behaviour of their solutions. 
Since these solutions are usually described by one-parameter operator semigroups, it is essential to have affective tools for the analysis 
of their asymptotic behaviour available. 
In many applications the underlying Banach space is a function space and thus exhibits some kind of order structure. 
If, in such a situation, a positive initial value of the evolution equation leads to a positive solution, one speaks of a \emph{positive} operator semigroup. Various approaches were developed to prove, under appropriate assumptions, convergence of such semigroups as time tends to infinity; see the end of the introduction for a very brief overview.

\subsection*{Semigroups of kernel operators} Consider a one-parameter semigroup $(T_t)_{t \in (0,\infty)}$ (meaning that $T_{t+s} = T_tT_s$ for all $t,s > 0$) 
of positive operators on a Banach lattice $E$. In concrete applications it happens frequently that the operators $T_t$ are \emph{kernel operators}
(see Appendix~\ref{appendix:AM-compact} for a precise definition of this notion). Under mild additional assumptions this is already sufficient for $(T_t)_{t\in (0,\infty)}$ to converge strongly as $t \to \infty$, as the following classical result of Greiner \cite[Kor 3.11]{greiner1982} illustrates:

\begin{theorem_no_number}
	Let $\cT = (T_t)_{t \in [0,\infty)}$ be a positive and contractive $C_0$-semigroup on an $L^p$-space over a $\sigma$-finite measure space where $1 \le p < \infty$. 
	Assume that $\cT$ possesses a fixed vector $h$ such that $h > 0$ almost everywhere and that $T_{t_0}$ is a kernel operator for some $t_0 \ge 0$.
	
	Then $(T_t)_{t\in [0,\infty)}$ converges strongly as $t \to \infty$.
\end{theorem_no_number}

It was observed only recently that this result can also be used to give an analytic proof of a famous theorem by Doob about the asymptotic behaviour of one-parameter Markov semigroups, 
see \cite{gerlach2012} and \cite[Sec~4]{gerlach2015}. During the last two decades several new versions of the above theorem were discovered. 
If one assumes the semigroup $\cT$ to be irreducible, then it suffices that $T_{t_0}$ \emph{dominates} a non-trivial kernel operator instead of being a kernel operator itself. 
For one-parameter Markov semigroups on $L^1$-spaces this was observed by Pich\'{o}r and Rudnicki in \cite[Thm~1]{pichor2000}. This result was applied to study the asymptotic behaviour of numerous models, 
in particular from mathematical biology; several examples can be found, for instance, in \cite{rudnicki2003, bobrowski2007, mackey2008, du2011, banasiak2012a, mackey2013} 
and in the survey article \cite{rudnicki2002a}. In fact, the same result remains true not only on $L^1$-spaces but on Banach lattices with order continuous norm, 
as observed by the first of the present authors in \cite[Thm 4.2]{gerlach2013b}. Greiner's original theorem was also revisited in a paper by Arendt \cite{arendt2008},
where an application to elliptic differential operators is given. Another related result due to Mokhtar-Kharroubi \cite{mokhtar-Kharroubi2015} focusses on perturbed 
one-parameter semigroups on $L^1$-spaces and was successfully employed in the study of the linear Boltzmann equation in \cite{lods2017}. 
A series of related papers deals with the more specific topic of positive $C_0$-semigroups on atomic Banach lattices, see \cite{davies2005, keicher2006, wolff2008}.

\subsection*{Contributions of this article} In its current stage of development the theory described above has proven useful 
in the asymptotic analysis of evolution equations from various branches of analysis. Yet, it still seems to have a lot of unexplored potential, 
both with regard to applications and from a theoretical point of view. At the same time, the theory currently appears to lack a certain degree of cohesion.

In this paper we present a new and very algebraic approach which allows us to achieve the following three goals:
\begin{enumerate}[\upshape (1)]
	\item We prove much more general versions of the above mentioned results which considerably broadens their range of applicability.
	\item We unify a large part of the theory.
	\item We settle a couple of theoretical questions, thus giving new insight into the foundations of the theory.
\end{enumerate}
Let us elaborate on these points in more detail. 

(1) As a first major generalisation we drop any continuity (or measurability) assumption on the semigroup parameter.
Actually one encounters several classes of one-parameter semigroups in analysis which have rather disparate continuity properties, among them: dual semigroups of $C_0$-semigroups on non-reflexive spaces; semigroups on spaces of measures, which are often only \emph{stochastically}
continuous (cf.\ \cite{hille2009,kunze2009}); and one-parameter semigroups which are strongly continuous on the time interval $(0,\infty)$
but not $C_0$ -- for instance semigroups arising from parabolic equations with certain non-local boundary conditions (see \cite{arendt2016}).
Our results can be applied to all these one-parameter semigroups without any regard to continuity. 

A second important generalisation is that, in case where $T_{t_0}$ only dominates a kernel operator, 
we can replace the irreducibility of the one-parameter semigroup from \cite[Thm~1]{pichor2000} and \cite[Thm 4.2]{gerlach2013b} 
with a much weaker assumption. The usefulness of this will become apparent in the proof of Theorem~\ref{thm:continuous-functions-dominate-compact-operator}. 
Third, we state our main results not only for one-parameter semigroups of the type $(T_t)_{t \in (0,\infty)}$
but for a much more general class of positive semigroup representations. This yields, for instance, also convergence results for multi-parameter semigroups 
of the type $(T_t)_{t \in (0,\infty)^d}$. 

(2) The essence of the present paper are two main results (Theorems~\ref{thm:convergence-AM-compact} and~\ref{thm:convergence-partial-AM-compact}). 
We shall see that one can obtain large parts of the theory described above as simple consequences of these two theorems. 
In our opinion this brings the theory, which currently consists of a variety of disparate results, into a much more cohesive state. 

(3) It is an interesting question what special property of kernel operators enforces the convergence of operator semigroups they are contained in. 
Our approach sheds new light onto this question: it is well-known that, under mild assumptions on the underlying space, 
a kernel operator is a so-called \emph{AM-compact} operator, i.e.\ it maps order intervals to relatively compact sets.
It turns out that this property suffices to develop the entire theory, i.e.\ we can prove our main results for AM-compact operators instead of kernel operators. 
Surprisingly, this leads at the same time to significant simplifications of the proofs. Another interesting theoretical question is what special role 
is played by the time interval $(0,\infty)$.  Simple finite-dimensional counterexamples show that our results do not remain true 
for operator semigroups indexed over the discrete set $\N$, i.e.\ for powers of single operators. We shall find out that this is not because of different topological 
properties of $(0,\infty)$ and $\N$ but rather due to their different algebraic properties, see Example~\ref{ex:dyadic}.

\subsection*{How the article is organised} In the rest of the introduction we briefly comment on related literature and discuss some preliminaries.
In Section~\ref{sec:groupsatomic} we study positive group representations on so-called \emph{atomic} Banach lattices and give sufficient conditions for them to be trivial. 
This is the key to prove our main results about convergence of positive semigroup representations in Section~\ref{sec:convergence}. 
In the subsequent Section~\ref{sec:consequences-i-convergence-results} we demonstrate that many known theorems are corollaries of our main results and in Section~\ref{sec:consequences-ii-spectral-theoretic-results} we discuss some further spectral theoretic consequences.
Section~\ref{sec:consequences-iii-semigroups-on-functions-spaces} contains several consequences on function spaces. Sections~\ref{sec:consequences-i-convergence-results}--\ref{sec:consequences-iii-semigroups-on-functions-spaces} do not aim for outmost generality; the results there are presented for the special case of one-parameter semigroups in order to improve there accessibility for people not interested in the abstract theory and in order to make them comparable to previous results from the literature.

In the Appendices~\ref{appendix:AM-compact} and~\ref{appendix:group-theory} some properties of so-called kernel operators and a few facts from group theory are recalled.

\subsection*{Related results and further literature} Our general assumption in this paper,
which is frequently fulfilled in applications, is that the semigroup representation under consideration contains -- or dominates -- a kernel operator.
Yet, it is worthwhile pointing out that there are also several other approaches to the asymptotic analysis of positive one-parameter semigroups. 
For instance, one can employ results relying on constrictivity properties (see \cite{emelyanov2007} for an overview), 
one can use so-called \emph{lower-bound} methods (see for instance \cite[Thm 5.6.2 and Thm 7.4.1]{lasota1994} and the recent paper \cite{gerlach2016}) 
or one can employ (quasi-)compactness properties \cite[Thm~4]{lotz1986}. 
Another important concept is the spectral theoretic approach which uses spectral properties of the generator of positive $C_0$-semigroups;
we refer to the monographs \cite{nagel1986, batkai2017} for an overview and to the article \cite{mischler2016} for some recent contributions to the theory. 
The asymptotic behaviour of one-parameter  Markov semigroups, which are used to describe Markov processes, can for example be studied by employing the theory of Harris operators 
(see e.g.\ \cite[Ch~V]{foguel1969}) as was for instance done in \cite[Thm 1]{pichor2000}.

\subsection*{Preliminaries}

Let $E,F$ be Banach spaces. We denote by $\cL(E,F)$ the space of bounded linear operators from $E$ to $F$
and write $\cL(E)$ shorthand for $\cL(E,E)$. The dual spaces of $E$ and $F$ are denoted by $E'$ and $F'$ and the adjoint of an operator $T \in \cL(E,F)$ by $T' \in \cL(F',E')$.

Throughout, we assume the reader to be familiar with the theory of Banach lattices; here
we only recall a few things in order to fix the terminology. Unless stated otherwise,
all Banach lattices in this article are real, meaning that the underlying scalar field is $\R$.
Let $E$ and $F$ be Banach lattices. A vector $f \in E$ is called \emph{positive} if $f \ge 0$. We write $f > 0$ if $f \ge 0$ but $f \not= 0$.
The positive cone in $E$ is denoted by $E_+$. For each $f \in E_+$ the \emph{principal ideal} generated by $f$ is the set 
$E_f \coloneqq  \{g \in E: \exists c \ge 0\text{, } \abs{g}\le cf\}$.
A vector $f \in E_+$ is called a \emph{quasi-interior point} of $E_+$ if $E_f$ is dense in $E$. 
If $E = L^p(\Omega,\mu)$ for a $\sigma$-finite measure space $(\Omega,\mu)$ and $p \in [1,\infty)$, 
then a vector $f \in E$ is a quasi-interior point of $E_+$ if and only if $f(\omega) > 0$ for almost all $\omega \in \Omega$. 
For instance, on a finite measure space the principal ideal $E_{\mathds{1}}$, generated by the constant function of value $1$, equals $L^\infty(\Omega,\mu)$. 
Given any subset $A\subseteq E$, we denote by $A^\bot \coloneqq \{ g \in E : \abs{g}\wedge \abs{f} = 0 \; \forall f \in A \}$
the \emph{disjoint complement} of $A$. For $f,g \in E$ we denote the \emph{order interval} between $f$ and $g$ by $[f,g] \coloneqq \{x \in E: \, f \le x \le g\}$. If it is necessary to emphasize the underlying Banach lattice in the notation, we shall write $[f,g]_E$ for $[f,g]$.

We shall often consider Banach lattices with order continuous norm. For a definition and many important properties of these spaces we refer to \cite[Sec II.5]{schaefer1974} 
or to \cite[Sec 2.4]{meyer1991}; here, we only recall that every $L^p$-space (over an arbitrary measure space) has order continuous norm provided that $p \in [1,\infty)$. 
A Banach lattice $E$ is called an \emph{AL-space} if the norm is additive on the positive cone, meaning that $\norm{f+g} = \norm{f} + \norm{g}$ for all $f,g \in E_+$. 
Kakutani's representation theorem for AL-spaces asserts that every AL-space is isometrically lattice isomorphic to $L^1(\Omega,\mu)$ for an appropriate 
(and not necessarily $\sigma$-finite) measure space $(\Omega,\mu)$; see \cite[Thm 2.7.1]{meyer1991}. 

The dual space $E'$ of a Banach lattice $E$ is a Banach lattice, too. 
For every $\varphi\in E'$ we have $\varphi \in E'_+$ if and only if $\applied{ \varphi}{ f} \geq 0$ for all $f \in E_+$.
A functional $\varphi \in E'$ is called \emph{strictly positive} if $\langle \varphi, f \rangle > 0$ for all $0 < f \in E$.

Let $E$ and $F$ be Banach lattices. An operator $T \in \calL(E,F)$ is called \emph{positive} if $TE_+ \subseteq F_+$; it is called \emph{strictly positive} if $Tf > 0$ whenever $f > 0$.
We write $\cL^r(E,F)$
for the regular operators from $E$ to $F$, i.e.\ the linear span of all positive operators in $\cL(E,F)$,
and we set $\cL^r(E)\coloneqq \cL^r(E,E)$.
If $F$ is order complete, then $\cL^r(E,F)$ is itself an order complete vector lattice \cite[Thm 1.3.2]{meyer1991}.
An operator $T \in \calL(E)$ on an AL-space $E$ is called a \emph{Markov operator} if $T$ is positive and norm-preserving on the positive cone, meaning that $Tf \in E_+$ 
and $\norm{Tf} = \norm{f}$ for all $f \in E_+$.

By a \emph{semigroup} $S = (S,\argument)$ we simply mean an algebraic semigroup, 
i.e.\ a set $S$ equipped with an associative binary operation ``$\argument$''.  Let $E$ be a Banach space.
Then the space $\calL(E)$ is a semigroup when endowed with the operator multiplication as binary operation. For a semigroup $S$ 
a semigroup homomorphism $S\to \cL(E)$ is called a \emph{representation of $S$ on $E$} or simply a 
\emph{semigroup representation} on $E$ and is denoted by $(T_t)_{t\in S} \subseteq \cL(E)$.
A semigroup representation is said to be \emph{commutative} if the semigroup $S$ is commutative; 
in this case we denote the semigroup operation on $S$ by ``$+$''.
In the case where $S = \bigl((0,\infty),+\bigr)$ we call a representation $(T_t)_{t\in (0,\infty)}\subseteq \cL(E)$ a \emph{one-parameter semigroup}.
A one-parameter semigroup $(T_t)_{t \in (0,\infty)}$ is called a \emph{$C_0$-semigroup} if $\lim_{t \downarrow 0} T_tx = x$ in $E$ for all $x \in E$. Equivalently, $(T_t)_{t \in (0,\infty)}$ extends to a strongly continuous representation $(T_t)_{t \in [0,\infty)}$ such that $T_0 = \id_E$.
A semigroup representation $(T_t)_{t\in S}$ is
said to be \emph{bounded} if $\sup_{t \in S}\norm{T_t} < \infty$. A subset $M \subseteq E$ is called \emph{invariant} under 
the semigroup representation $\cT = (T_t)_{t \in S}$ if $T_tM \subseteq M$ for all $t \in S$. A vector $x \in E$ is called a \emph{fixed point} 
or a \emph{fixed vector} of $\cT$ if $T_tx = x$ for all $t \in S$; the \emph{fixed space} of $\cT$ is the vector subspace of $E$ that consists of all fixed vectors of $\cT$.
 
Let the space $E$ be a Banach lattice and let $\cT = (T_t)_{t \in S}$ be a semigroup representation on $E$. We say that ``$\cT$ possesses a quasi-interior fixed point'' 
if $\cT$ has a fixed point which is a quasi-interior point of $E_+$. We say that the semigroup representation $\cT$ is positive if each operator $T_t$ is positive. 
A positive semigroup representation $\cT = (T_t)_{t \in (0,\infty)}$ on $E$ is called \emph{irreducible} if $\{0\}$ and $E$ are the only closed ideals in $E$ which are invariant under $\cT$.
A vector $x \in E$ is said to be a \emph{super fixed point} 
of a family of operators $\cT \subseteq \cL(E)$ if $x \ge 0$ and if $Tx\geq x$ for all $T \in \cT$.

If $(G,\cdot)$ is a group (and thus, in particular, a semigroup), then a semigroup representation $(T_t)_{t \in G}$ on a Banach space $E$ 
is called a \emph{group representation} of $G$ on $E$ if each operator $T_t$ is invertible in $\calL(E)$. 
In this case, the mapping $t \mapsto T_t$ is automatically a group homomorphism from $G$ into the group of all invertible operators in $\calL(E)$. A group representation $(T_t)_{t \in G}$ on $E$ is said to be \emph{trivial} if $T_t = \id_E$ for all $t \in G$.

Given a commutative semigroup representation $\cS = (T_t)_{t\in S}$ on a Banach space $E$, we consider $\cS$
as a net where $S$ is directed by 
\begin{align}
	\label{eq:pre-order-on-sg}
	s\leq t \quad \colonequiv \quad t=s \; \text{ or } \; t = s+r \text{ for some } r\in S.
\end{align}
If for given $x, y\in E$ the net $(T_t x)_{t\in S}$ converges to $y$, we denote this by $\lim_{t\in S} T_t x= y$.
In other words, $\lim_{t\in S}T_tx =y$ if and only for every $\eps>0$ there exists $s\in S$ such that $\norm{T_{s+t}x - y}\leq \eps$ for every $t\in S$.
If the net $(T_t x)_{t\in S}$ is convergent in $E$ for every $x \in E$, we call the representation $\cS$ \emph{strongly convergent}.

In many of our results we assume a commutative semigroup $S$ to generate a group, which shall be understood as follows.
A semigroup $S$ is \emph{embeddable into a group} if there exists an injective semigroup homomorphism from $S$ into a group.  If $S$ is commutative,
then this is the case if and only if $S$ is \emph{cancellative}, meaning that $st=sr$ implies $t=r$ for $s,t,r\in S$ \cite[\S 1.10]{clifford1961}.
If a semigroup $S$ is embeddable into a group $G$, we may consider $S$ as a subset of $G$ and we can define the \emph{group generated by $S$}, denoted by $\langle S\rangle$,
as the smallest subgroup of $G$ containing $S$. If, in addition, $S$ is commutative, then the group $\langle S\rangle$ is,
up to group isomorphism, independent of the choice of $G$ and we say that \emph{$S$ generates $\langle S\rangle$}.

A group $(G,\argument)$ is called \emph{divisible} if for every $t \in G$ and every $n \in \N$ there exists an element $s \in G$ such that $s^n = t$. 
For example, the additive groups $\Q$ and $\R$ are divisible, but the group of dyadic numbers $\{k2^{-n}:  k \in \Z\text{, } n \in \N_0\big\}$ is not. A characterisation of divisible groups is given in Proposition~\ref{prop:divisible-groups}.

\section{Group Representations on Atomic Banach Lattices}
\label{sec:groupsatomic}
In this section we study group representations on atomic Banach lattices with order continuous norm. Our main result characterises groups whose bounded positive representations possessing a quasi-interior fixed point are trivial. 

Positive group representations and groups of positive operators have appeared occasionally in the literature 
but it seems that they have not been studied as intensively as, for instance, unitary group representations on Hilbert spaces. 
Some spectral theoretic results about positive group representations can be found in \cite{wolff1982, greiner1983} 
and some structural results about groups of positive operators in \cite[Sec~3]{schaefer1978}. 
A systematic investigation of positive group representations was just initiated in the recent series of papers \cite{jeu2012, jeu2014, jeu2015}. 
While all of these references focus on strongly continuous representations of certain topological groups,
we do not require any topological assumptions in the following.

We start by recalling some basic properties of atomic Banach lattices.
Let $E$ be a Banach lattice. An element $a \in E_+$ is called an \emph{atom} if the principal ideal $E_a$ is one-dimensional. We note that two different atoms of norm $1$ are always disjoint. 
The Banach lattice $E$ is called \emph{atomic} if the band generated by all atoms equals $E$. Such Banach lattices are sometimes also called \emph{discrete}. Typical examples of atomic Banach lattices are real-valued $\ell^p$-spaces (over an arbitrary index set) for $p \in [1,\infty]$, 
as well as the space $c$ of real-valued convergent sequences endowed with the supremum norm and its subspace $c_0$ of sequences which converge to $0$. In these spaces the atoms of norm $1$ are exactly the canonical unit vectors. 

If $E$ is an atomic Banach lattice with order continuous norm and $A$ denotes the set of normalised atoms in $E$, then each $f \in E$ can be represented as an unconditionally convergent series $f = \sum_{a \in A} \lambda_a a$ for appropriate numbers $\lambda_a \in [0,\infty)$, see \cite[p.\,91]{wnuk1999}. We note that, as a consequence of the unconditional convergence, only finitely many of the numbers $\lambda_a$ can be larger than any fixed number $\varepsilon > 0$, and only countably many of them can be non-zero. Those observations imply the following result:

\begin{proposition} \label{prop:atomic-bl-c0}
	Let $E$ be an atomic Banach lattice with order continuous norm and let $A \subseteq E_+$ denote the set of atoms of norm $1$. Then the following assertions hold:
	\begin{enumerate}[\upshape (a)]
		\item There exists a (uniquely determined) lattice homomorphism $J: E \to c_0(A)$ which maps each $a \in A$ to the canonical unit vector $e_a$.
		\item We have $f = \sum_{a \in A} (Jf)_a a$ for each $f \in E$ (where the series converges in the unconditional sense).
	\end{enumerate}
\end{proposition}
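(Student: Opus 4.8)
The plan is to build the map $J$ by first defining it coordinate-wise on atoms and then extending by the series representation, checking at each stage that the required properties survive. First I would recall the structural fact quoted just before the statement: since $E$ is atomic with order continuous norm, every $f \in E_+$ has an unconditionally convergent expansion $f = \sum_{a \in A} \lambda_a(f)\, a$ with $\lambda_a(f) \ge 0$, and the coefficients are uniquely determined (if $a$ has norm one and $E_a$ is one-dimensional, then for $g \in E_a$ the coefficient is the unique scalar $\mu$ with $g = \mu a$; uniqueness of the full expansion follows because the map $g \mapsto \lambda_a(g)$ is the band projection onto $E_a$ followed by the identification $E_a \cong \R$). For general $f \in E$ one sets $\lambda_a(f) := \lambda_a(f^+) - \lambda_a(f^-)$, or equivalently uses that $f \mapsto \lambda_a(f)$ is the composition of the band projection $P_a$ onto the (projection) band $E_a$ with the isomorphism $E_a \to \R$; this makes each $\lambda_a$ a positive linear functional, in fact a lattice homomorphism $E \to \R$ since band projections onto one-dimensional bands are lattice homomorphisms.

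Next I would define $Jf := (\lambda_a(f))_{a \in A}$ and verify that $Jf \in c_0(A)$. For $f \ge 0$ this is exactly the observation recalled in the text: unconditional convergence of $\sum_a \lambda_a(f) a$ forces, for each $\eps > 0$, only finitely many $a$ with $\lambda_a(f) \ge \eps$ (otherwise one could extract a subseries of terms of norm $\ge \eps$, contradicting that the net of partial sums is Cauchy). For general $f$, write $|\lambda_a(f)| \le \lambda_a(|f|)$ (using that $\lambda_a$ is a lattice homomorphism, so $\lambda_a(|f|) = |\lambda_a(f)|$ — actually equality holds), so $Jf \in c_0(A)$ follows from the positive case applied to $|f|$. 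Linearity of $J$ is inherited from linearity of each $\lambda_a$, and $J$ is a lattice homomorphism because each coordinate $\lambda_a$ is; one checks $J(f \vee g)_a = \lambda_a(f \vee g) = \lambda_a(f) \vee \lambda_a(g) = (Jf \vee Jg)_a$ pointwise in $a$. Boundedness of $J$ is not literally asserted in the statement, but if wanted it follows since $\norm{Jf}_\infty = \sup_a |\lambda_a(f)| \le \norm{f}$ (each $\lambda_a$ has norm $\le 1$ as a positive functional dominated by the norm on the one-dimensional band). Finally $Ja = e_a$ is immediate from the definition since $a = 1 \cdot a + \sum_{b \ne a} 0 \cdot b$. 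Assertion (b) is then just a restatement of the expansion with $\lambda_a(f) = (Jf)_a$, and uniqueness of $J$ follows because any lattice homomorphism sending $a \mapsto e_a$ must agree with our $J$ on the dense ideal $\lin\{a : a \in A\}$ (finite linear combinations of atoms) and hence everywhere by continuity — here one uses that this span is dense, which is precisely where atomicity plus order continuity of the norm enters via the cited series representation.

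The main obstacle I anticipate is the careful justification that the coefficient functionals $\lambda_a$ are well-defined, linear, and lattice homomorphisms on all of $E$ (not just $E_+$), together with the density of $\lin A$. The cleanest route is to invoke that in a Banach lattice with order continuous norm every band is a projection band (a standard consequence, see \cite[Sec II.5]{schaefer1974} or \cite[Sec 2.4]{meyer1991}), apply this to $E_a$, and identify the band projection $P_a$ with multiplication by a coordinate; then $\lambda_a = \iota_a^{-1} \circ P_a$ where $\iota_a : \R \to E_a$, $t \mapsto ta$. Linearity and positivity are then automatic, and the lattice homomorphism property of $P_a$ onto a projection band gives the rest. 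Density of $\lin A$ is exactly the content of the series representation from \cite[p.\,91]{wnuk1999} quoted above, so no new argument is needed there. A minor point to handle is that $A$ may be uncountable while each individual $Jf$ has countable support — this is consistent with $c_0(A)$, whose elements are by definition those families vanishing outside a set that is finite beyond each $\eps$, hence with at most countable support, so nothing goes wrong.
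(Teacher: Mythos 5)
Your proof is correct and takes essentially the same approach the paper intends: the paper gives no explicit proof of this proposition, merely asserting that it follows from the unconditionally convergent series representation quoted just above it and the accompanying finiteness observation. Your write-up is a careful elaboration of exactly that argument, with the coefficient functionals realised as band projections onto the one-dimensional principal bands (which are indeed projection bands by order continuity), so nothing needs to be changed.
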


The next theorem is the main result of this section. By Proposition~\ref{prop:divisible-groups} a commutative group $G$ fulfils assertion~(i) of the theorem if and only if $G$ is divisible.

\begin{theorem} \label{thm:group-on-atomic-bl}
	For every group $G$ the following assertions are equivalent:
	\begin{enumerate}[\upshape (i)]
		\item Every proper normal subgroup of $G$ has infinite index.
		\item The trivial one is the only positive and bounded representation of $G$ with a quasi-interior fixed point on any atomic Banach lattice with order continuous norm.
	\end{enumerate}
\end{theorem}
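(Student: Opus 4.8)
The plan is to prove the two implications separately; the substance is in $(i)\Rightarrow(ii)$, while its converse is a short construction. For $\neg(i)\Rightarrow\neg(ii)$, suppose $N\trianglelefteq G$ is a proper normal subgroup of finite index $n\ge 2$. The left translation action of the finite group $G/N$ on itself induces a representation of $G/N$, hence via the quotient map a representation $(T_t)_{t\in G}$ of $G$, on the finite-dimensional Banach lattice $\mathds{R}^{G/N}$ (with any lattice norm) by permutation matrices. This representation is positive and bounded (the $T_t$ are even isometries), it fixes the quasi-interior point $\mathds{1}$, and it is nontrivial because the left regular representation of the nontrivial group $G/N$ is faithful. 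Since every finite-dimensional Banach lattice is atomic with order continuous norm, this witnesses the failure of $(ii)$.

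For $(i)\Rightarrow(ii)$, let $\cT=(T_t)_{t\in G}$ be a positive bounded representation of $G$ on an atomic Banach lattice $E$ with order continuous norm, with a quasi-interior fixed point $h$, and put $M\coloneqq\sup_{t\in G}\norm{T_t}$. Since $\cT$ is a group representation, each $T_t$ is invertible with $T_t^{-1}=T_{t^{-1}}$ positive; and a bijective positive operator with positive inverse is a lattice isomorphism (a classical fact), so it maps principal ideals onto principal ideals. Consequently each normalised atom $a$ is sent to a positive multiple $T_t a=c_t(a)\,\sigma_t(a)$ of a normalised atom $\sigma_t(a)$, the maps $\sigma_t$ define an action of $G$ on the set $A$ of normalised atoms, and $c_{st}(a)=c_t(a)\,c_s(\sigma_t(a))$. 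Taking norms gives $c_t(a)=\norm{T_t a}\le M$, and since $T_{t^{-1}}T_t=\id_E$ forces $c_{t^{-1}}(\sigma_t(a))=c_t(a)^{-1}$, we obtain $1/M\le c_t(a)\le M$ for all $t\in G$ and all $a\in A$.

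Next I bring in the fixed point. By Proposition~\ref{prop:atomic-bl-c0} write $h=\sum_{a\in A}(Jh)_a\,a$; every coefficient is strictly positive, for if $(Jh)_a=0$ then $a$ would be disjoint from $h$, so $E_h$ would lie in the proper closed ideal $\{a\}^{\bot}$ and could not be dense. Applying $T_t$ to the series and comparing coefficients yields $(Jh)_{\sigma_t(a)}=c_t(a)\,(Jh)_a\ge (Jh)_a/M$ for all $t$. Since $Jh\in c_0(A)$, only finitely many atoms $b$ satisfy $(Jh)_b\ge (Jh)_a/M>0$, so the $G$-orbit of every $a\in A$ is finite. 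Now fix $a\in A$ with orbit $O$: the action of $G$ on $O$ is a homomorphism $G\to\operatorname{Sym}(O)$ whose kernel is normal of index at most $\abs{O}!<\infty$; by $(i)$ this kernel is not proper, hence equals $G$, i.e.\ $\sigma_t(a)=a$ for all $t$. As $a$ was arbitrary, $\sigma_t=\id_A$, whence $c_t(a)=(Jh)_a/(Jh)_a=1$ and $T_t a=a$ for every normalised atom. The atoms span a dense subspace of $E$ (again Proposition~\ref{prop:atomic-bl-c0}) and each $T_t$ is continuous, so $T_t=\id_E$ for all $t$, which is $(ii)$.

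The main obstacle is the structural input ``positive with positive inverse $\Rightarrow$ lattice isomorphism $\Rightarrow$ permutes the atoms up to positive scalars'', together with the point that boundedness and the quasi-interior fixed point jointly confine those scalars to $[1/M,M]$ and thereby force the orbits on the atoms to be finite; once that is secured, hypothesis $(i)$ finishes the argument almost for free, and the remainder is routine bookkeeping with the group action and the coefficient functionals $Jh$.
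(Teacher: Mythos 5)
Your proof is correct and follows essentially the same route as the paper's: for (i)$\Rightarrow$(ii) both arguments use that the lattice isomorphisms $T_t$ permute the normalised atoms up to positive scalars, that boundedness together with the quasi-interior fixed point and the $c_0$-structure forces the orbits on atoms to be finite, and then invoke (i) to make the action on atoms trivial; for the converse both use the left regular representation of the finite quotient $G/N$. The only differences are cosmetic (you pin down the scalars via the fixed-point coefficients $(Jh)_a$ rather than via the paper's "bounded homomorphism into $(0,\infty)$ is trivial" argument).
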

\begin{proof}
	(i) $\Rightarrow$ (ii): Assume that (i) holds; let $E$ be an atomic Banach lattice with order continuous norm and let $\cT = (T_t)_{t \in G}$ be a positive and bounded group representation of $G$ with quasi-interior fixed point $y \in E_+$. By Proposition~\ref{prop:atomic-bl-c0} we may assume that $E \subseteq c_0(A)$, where $A$ is set of all atoms of norm $1$ in $E$. Let $\cS(A)$ denote the group of all bijections $A \to A$.

	Since every operator $T_t$ is a lattice isomorphism on $E$, it maps atoms to atoms. Thus, the group representation $(T_t)_{t \in G}$ induces a group homomorphism $\sigma: G \ni t \mapsto \sigma_t \in \cS(A)$ such that $T_t a = \lambda(t,a)\sigma_t(a)$ for all $t \in G$, $a \in A$ and certain $\lambda(t,a) \in (0,\infty)$. As $(T_t)_{t \in G}$ is bounded, there exists $\varepsilon > 0$ such that $\lambda(t,a) \ge \varepsilon$ for all $t \in G$ and all $a \in A$.
	
	Now fix $a \in A$. There exists a number $c > 0$ such that $a \le cy$ and hence, $\lambda(t,a)\sigma_t(a) \le cy$ for each $t \in G$. Since $cy \in c_0(A)$, this implies that the orbit of $a$ under $\sigma$ is finite and hence, by assumption~(i), the orbit consists of $a$ only (see Proposition~\ref{prop:divisible-groups}). Thus, $\sigma_t = \id_A$ for each $t \in G$.
	
	It now follows that for each $a \in A$ the mapping $\lambda(a,\argument): G \to (0,\infty)$ is a bounded group homomorphism, so it is constantly one. Hence, $(T_t)_{t \in G}$ acts trivially on the atoms in $E$ and thus on $E$.
	
	(ii) $\Rightarrow$ (i):
	Assume that (ii) holds and let $N\subseteq G$ be a normal subgroup of finite index; we have to prove that $N = G$.
	
	The factor group $G/N$ operates transitively on itself via, say, left-translation and thus $G$ operates transitively on $G/N$ via the canonical mapping $G \to G/N$. 
	This group action induces a group homomorphism $\psi\colon G \to \cS(G/N)$, where $\cS(G/N)$ denotes the group of all bijections on $G/N$. 
	For every $\pi \in \cS(G/N)$, let $T_\pi$ be the Koopman operator induced by $\pi$ on the finite dimensional space $\ell^2(G/N)$.
	Then $\pi \mapsto T_{\pi^{-1}}$ is a group homomorphism from $\cS(G/N)$ into the group of lattice isomorphisms on $\ell^2(G/N)$
	and the mapping $g \mapsto T_{(\psi(g))^{-1}}$ is a bounded and positive group representation of $G$ on the finite dimensional Banach lattice $\ell^2(G/N)$; 
	the constant function with value $1$ on $G/N$ is a fixed-point of this group representation and a quasi-interior point of $\ell^2(G/N)_+$.
	Hence, the representation $(T_{(\psi(g))^{-1}})_{g \in G}$ is trivial according to (ii), so $\psi(g) = \id_{G/N}$ for each $g \in G$. Since the action of $G$ on $G/N$ is transitive, this implies that $G/N$ is a singleton.
\end{proof}

The implication ``(i) $\Rightarrow$ (ii)'' in the above theorem fails if one considers group representations that 
do not possess a quasi-interior fixed point: let $G$ be the additive group $\R$ and let $E$ be the $\ell^p$-space over the index set $\R$ for some $p \in [1,\infty)$;
then a counteraxample is given by the shift group, given by $(T_tf)(\omega) = f(\omega-t)$ for $f \in E$ and $\omega \in \R$.
In the following we construct a counterexample on an $\ell^p$-space over a countable index set.

\begin{example}
	\label{ex:rotation-group-over-q}
	There exists a non-trivial bounded positive group representation 
	$(T_t)_{t\in \R} \subseteq \cL(E)$ of $(\R,+)$ on $\ell^p(\Q)$, $1 \le p < \infty$.
	
	In order to see this, we first construct a surjective group homomorphism $\phi \colon \R \to \Q$.
	Let $\{ v_j : j\in J \}$ denote a basis of $\R$ as a vector space over $\Q$
	and fix some $i \in J$. Then $V\coloneqq \lin_\Q \{ v_j : j\neq i\}$ 
	is a subspace of $\R$ of codimension $1$. In particular, $V$ is a normal subgroup of $\R$
	and the quotient group $\R/V$ is isomorphic to $\Q$. The composition of the canonical epimorphism $\R \to \R/V$
	with the last-mentioned isomorphism from $\R/V$ to $\Q$ is the desired surjective group homomorphism $\phi$.
	
	For $t\in \R$ we now define $T_t\in \cL(E)$ as the Koopman operator associated with the translation by $\phi(t)$,
	i.e.\ $T_t (x_q)_{q\in \Q} \coloneqq (x_{q+\phi(t)})_{q \in \Q}$ for any $(x_q)\in \ell^p(\Q)$.
	Then $(T_t)_{t \in \R}$ has all the desired properties.
\end{example}

It is worth noting that on an atomic Banach lattice $E$ with order continuous norm every positive, bounded
and \emph{strongly continuous} group representation $(T_t)_{t\in \R} \in\cL(E)$ is trivial. This
follows for instance from \cite[Prop 2.3]{wolff2008}.

\section{Convergence of Positive Semigroup Representations}
\label{sec:convergence}

In this section we prove our main results, convergence theorems for positive semigroup representations.
For this purpose, we need a result from Banach lattice theory which we state in Proposition~\ref{prop:range-of-positive-projection}. 
The latter is essentially contained in \cite[Prop III.11.5]{schaefer1974} but we give a bit more explicit information here.

\begin{proposition} \label{prop:range-of-positive-projection}
	Let $(E,\norm{\argument})$ be a Banach lattice and let $P \in \calL(E)$ be a positive projection. 
	Then $\norm{x}_{PE} \coloneqq \norm{P\abs{x}}$ defines a norm on $PE$ that is equivalent to $\norm{\argument}$
	and $(PE, \norm{\argument}_{PE})$ is a Banach lattice with respect to the order induced by $E$ and with the modulus $\abs{x}_{PE} = P\abs{x}$.
	Moreover, if $y \in E_+$ is a quasi-interior point of $E_+$, then $Py$ is a quasi-interior point of $(PE)_+$.
\end{proposition}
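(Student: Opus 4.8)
The plan is to verify the assertions in a natural order, starting with the norm equivalence. First I would observe that $\norm{x}_{PE} = \norm{P\abs{x}}$ is well-defined for $x \in PE$, and check the norm axioms: homogeneity and the triangle inequality follow from positivity of $P$ together with $\abs{x+y} \le \abs{x} + \abs{y}$, while definiteness requires a short argument, namely that $P\abs{x} = 0$ forces $\abs{x} \le P\abs{x}$... — here I would use the fact that for $x \in PE$ one has $Px = x$, hence $\abs{x} = \abs{Px}$, and need to relate $\abs{Px}$ to $P\abs{x}$. The correct tool is: for a positive operator, $\abs{Px} \le P\abs{x}$ always holds; thus $P\abs{x} = 0 \Rightarrow \abs{x} = \abs{Px} = 0$ for $x \in PE$. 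For equivalence of norms: on the one hand $\norm{x}_{PE} = \norm{P\abs{x}} \le \norm{P}\,\norm{\abs{x}} = \norm{P}\,\norm{x}$; on the other hand, for $x \in PE$ we have $\abs{x} = \abs{Px} \le P\abs{x}$, so $\norm{x} = \norm{\abs{x}} \le \norm{P\abs{x}} = \norm{x}_{PE}$. This already gives equivalence with explicit constants.

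Next I would address the lattice structure. Since $\norm{\argument}_{PE}$ is equivalent to the restriction of $\norm{\argument}$ and $PE$ is norm-closed in $E$ (being the range of a bounded projection), $(PE, \norm{\argument}_{PE})$ is a Banach space. To see it is a Banach lattice under the induced order, I would check that $PE$ with the order inherited from $E$ is a vector lattice whose lattice operations are given by $x \vee_{PE} y = P(x \vee y)$ and $\abs{x}_{PE} = P\abs{x}$: one verifies that $P(x\vee y)$ is an upper bound of $\{x,y\}$ in $PE$ (using positivity of $P$ and $Px = x$, $Py = y$) and that it is the least one, since any upper bound $z \in PE$ satisfies $z = Pz \ge P(x \vee y)$. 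Then the compatibility of the modulus with the norm is automatic: $\norm{\,\abs{x}_{PE}\,}_{PE} = \norm{P\abs{P\abs{x}}}$, and since $P\abs{x} \ge 0$ one has $\abs{P\abs{x}} = P\abs{x}$ and $P(P\abs{x}) = P\abs{x}$, so this equals $\norm{P\abs{x}} = \norm{x}_{PE}$; monotonicity of the norm on $PE$ follows similarly from monotonicity of $\norm{\argument}$ on $E$ and positivity of $P$.

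Finally, for the quasi-interior point claim: given a quasi-interior point $y$ of $E_+$, I would show $Py$ is a quasi-interior point of $(PE)_+$ by proving that the principal ideal of $PE$ generated by $Py$ is dense in $(PE, \norm{\argument}_{PE})$. The idea is that the principal ideal in $PE$ generated by $Py$ contains $P(E_y)$: indeed if $\abs{g} \le cy$ in $E$ then $\abs{Pg}_{PE} = P\abs{Pg} \le P\abs{g} \le c\,Py$, so $Pg$ lies in the $PE$-ideal generated by $Py$. Since $E_y$ is dense in $E$ and $P$ is continuous, $P(E_y)$ is dense in $PE$ in the original norm, hence also in the equivalent norm $\norm{\argument}_{PE}$; therefore the ideal generated by $Py$ is dense. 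The main obstacle — really the only subtle point — is keeping straight the distinction between the $E$-modulus and the $PE$-modulus and correctly using the inequality $\abs{Px} \le P\abs{x}$ (with equality exactly on the positive part of the range); once that is handled cleanly, everything else is routine. One should also remark that since $\norm{\argument}_{PE}$ is equivalent to $\norm{\argument}$, any topological notion (closure, density) computed in one norm agrees with the other, which is what makes the last step legitimate.
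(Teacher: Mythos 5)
Your proposal is correct and follows essentially the same route as the paper's proof: both arguments rest on the inequality $\abs{Px} \le P\abs{x}$, identify $P\abs{x}$ as the modulus in $PE$ by checking it is the least upper bound of $\pm x$ there, deduce norm equivalence from $\abs{x} \le P\abs{x}$ for $x \in PE$, and obtain the quasi-interior point statement from the density of $P(E_y)$ in $PE$ via continuity of $P$. Your version merely spells out a few routine verifications (completeness, the supremum formula, monotonicity) that the paper leaves implicit.
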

\begin{proof}
	We abbreviate $F \coloneqq PE$. Let $x\in F$.  It follows from $\pm x \leq \abs{x}$ that $\pm x = \pm Px \leq P\abs{x}$. On the other hand for every $z \in F_+$ 
	with $\pm x \leq z$ we have $\abs{x}\leq z$ by the definition of the modulus $\abs{x}$ in $E$. 
	This implies that $P\abs{x}\leq Pz=z$. Hence $P\abs{x}$ is the modulus of $x$ in $F$
	and we proved that $F$ is a vector lattice.

	It is obvious that $\norm{\argument}_{F}$ is a seminorm.
	Since $P\abs{x} \geq \pm x$, we have $P\abs{x}\geq \abs{x}$.  
	This implies that the seminorm $\norm{\argument}_{F}$
	is equivalent to $\norm{\argument}$ on $F$. In particular, $\norm{\argument}_{F}$ is a norm on $F$
	and $(F,\norm{\argument}_{F})$ is a Banach lattice.

	Finally, let $y\in E_+$ be a quasi-interior point of $E_+$. Then
	\begin{align*}
		F = P(\overline{E_y}) \subseteq \overline{PE_y} \subseteq \overline{F_{Py}},
	\end{align*}
	as $P$ is continuous. Hence, $Py$ is a quasi-interior point of $F$.
\end{proof}

As a second ingredient we need a Banach lattice version of a classical result which is sometimes called the \emph{Jacobs--de Leeuw--Glicksberg (JdLG) decomposition theorem}.

\begin{theorem}[JdLG]
\label{thm:JdLG}
	Let $S$ be a commutative semigroup that generates a group $G$
	and let $(T_t)_{t\in S}\subseteq \cL(E)$ be a positive and bounded representation of $S$ on a Banach lattice $E$
	such that the orbits $\{ T_t x : t \in S \}$ are relatively weakly compact for all $x\in E$.

	Then there exists a positive projection $P \in \cL(E)$ that commutes with each $T_t$ and preserves all fixed points of $(T_t)_{t\in S}$
	such that the following assertions hold:
	\begin{enumerate}[\upshape (a)]
	\item There exists a bounded and positive representation $(S_t)_{t\in G}\subseteq \cL(PE)$ of $G$ on $PE$ 
	such that $S_t = T_t|_{PE}$ for each $t\in S$.
	\item For every $x\in \ker P$, $0$ is in the weak closure of $\{ T_t x : t\in S \}$.
	\end{enumerate}
\end{theorem}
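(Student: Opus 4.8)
The plan is to derive this Banach-lattice version of the JdLG decomposition from the classical scalar version applied on the Banach space $E$, and then upgrade the topological/operator-theoretic conclusions to lattice-theoretic ones using positivity and Proposition~\ref{prop:range-of-positive-projection}. First I would invoke the classical Jacobs--de Leeuw--Glicksberg splitting theorem for the commutative representation $(T_t)_{t\in S}$ under the stated relative weak compactness of orbits: the weak operator closure of $\{T_t : t\in S\}$ is a compact commutative semitopological semigroup, its minimal ideal is a group with identity $P$, and this $P\in\cL(E)$ is a projection commuting with every $T_t$ such that $E = PE \oplus \ker P$, where on $PE$ each $T_t$ restricts to an invertible operator and $\ker P$ is precisely the ``flight vectors'' $\{x : 0 \in \overline{\{T_t x : t\in S\}}^{\text{weak}}\}$. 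This immediately gives assertion (b) and the fact that $P$ commutes with all $T_t$; that $P$ preserves fixed points is clear since if $T_t x = x$ for all $t\in S$ then $x$ lies in the closed subspace $PE$ (the fixed space is contained in the reversible part), so $Px = x$.

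The positivity of $P$ is the first genuinely lattice-theoretic point. I would argue that $P$, being a weak operator limit of a net of convex combinations (or rather a net of elements) drawn from the semigroup generated by the $T_t$ — more precisely, $P$ lies in the weak operator closure of $\{T_t : t \in S\}$ itself, since the minimal ideal is contained in that closure — is a limit of positive operators, hence positive. (If one prefers, $P = \lim_\alpha T_{t_\alpha}$ in the weak operator topology for some net $(t_\alpha)$ in $S$, and each $T_{t_\alpha} \ge 0$, so $\langle \varphi, P x\rangle = \lim_\alpha \langle \varphi, T_{t_\alpha} x\rangle \ge 0$ for $\varphi \in E'_+$, $x \in E_+$.) Then Proposition~\ref{prop:range-of-positive-projection} applies: $PE$ is a Banach lattice in the induced order with modulus $|x|_{PE} = P|x|$ and an equivalent norm.

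For assertion (a), I would first observe that for each $t\in S$ the restriction $S_t := T_t|_{PE}$ is an invertible operator on $PE$ (this is exactly the statement that the reversible part is a group of operators), and it is positive on the Banach lattice $PE$ since $PE_+ \subseteq E_+$ is mapped into $E_+ \cap PE = PE_+$. The collection $\{S_t : t\in S\}$ is thus a commutative family of invertible positive operators; since each is invertible, the map $t \mapsto S_t$ is a semigroup homomorphism from the cancellative (hence group-embeddable) commutative semigroup $S$ into the group of invertible operators on $PE$, and by the universal property of $\langle S\rangle$ this extends uniquely to a group homomorphism $(S_t)_{t\in G}\subseteq \cL(PE)$; concretely $S_{s^{-1}t} := S_s^{-1}S_t$, well-defined by commutativity. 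Positivity of $S_t$ for general $t\in G$ follows because $S_t^{-1}$ is also of the form $S_r S_{r'}^{-1}$ and, by the JdLG structure, the whole group $\{S_t : t\in G\}$ sits inside the compact group that is the minimal ideal, which is contained in the weak closure of the positive semigroup $\{T_t|_{PE} : t \in S\}$ — so every $S_t$ is again a weak limit of positive operators. Boundedness of $(S_t)_{t\in G}$ is inherited from the original bound: the compact group in the weak operator closure consists of operators whose norms are bounded by $\sup_{t\in S}\|T_t\|$ (weak operator limits do not increase the norm bound when combined with uniform boundedness on the original net, via uniform boundedness on each orbit).

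The main obstacle I anticipate is making the ``$P$ and the whole reversible group lie in the weak operator closure of the positive semigroup $\{T_t\}$'' argument clean enough to conclude positivity and the uniform norm bound of $(S_t)_{t\in G}$ — one has to be a little careful that the relevant closure is taken in the weak operator topology on a suitable bounded set where that topology is compact, and that convex combinations (if they enter through a mean ergodic-type argument) are still positive, which they are. A secondary technical point is checking that the norm $\|\cdot\|_{PE}$ from Proposition~\ref{prop:range-of-positive-projection}, being equivalent to $\|\cdot\|$, does not affect any of the boundedness statements, which is immediate. Everything else — commutativity, the fixed-point preservation, the homomorphism extension to $G$ — is routine algebra once the scalar JdLG theorem is in hand.
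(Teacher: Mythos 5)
Your proposal is correct and follows essentially the same route as the paper: apply the classical Jacobs--de Leeuw--Glicksberg theorem to the weak operator closure of $\{T_t : t\in S\}$, obtain positivity of $P$ and of the reversible part from the fact that everything lies in the closure of a set of positive operators, and extend $t\mapsto T_t|_{PE}$ to a group homomorphism on $G$ (the paper packages this last step as Proposition~\ref{prop:generating-semigroup}). The only difference is that you spell out details the paper leaves implicit.
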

\begin{proof}
	Let $\cS$ denote the closure of $\{T_t: \, t \in S\}$ in the weak operator topology. Then $\cS$ is a commutative semigroup of positive operators. The classical Jacobs--de Leeuw--Glicksberg theorem (see e.g.\ \cite[Chapter~16]{eisner2015}) yields a projection $P \in \cS$ such that $\cS$ restricts on $PE$ to a weakly compact group $\cK \subseteq \cL(PE)$ and such that a vector $x \in E$ is contained in $\ker P$ if and only if $0$ is in the weak closure of the orbit $\{T_tx: \, t \in S\}$. By Proposition~\ref{prop:generating-semigroup} the representation $S \ni t \mapsto T_t|_{PE} \in \cK$ can be extended to a group homomorphism $G \to \cK$, which yields the assertion.
\end{proof}

Now we turn to the first of our convergence theorems for certain positive semigroup representations. 
Our main assumption is that the semigroup representation contains a so-called \emph{AM-compact operator}, which is defined as follows:

\begin{definition}
\label{def:AMcompact}
	Let $E$ be a Banach lattice and $Y$ a Banach space.
	A linear operator $T\colon E\to Y$ is called \emph{AM-compact} if $T$ maps order intervals in $E$
	to relatively compact subsets in $Y$.
\end{definition}

\begin{remark}
	\label{rem:atomic-BL-operator-AM-compact}
	It follows from \cite[Cor 21.13]{aliprantis1978} or from \cite[Theorem~6.1]{wnuk1999} that a Banach lattice $E$ is atomic with order continuous norm 
	if and only if every order interval is compact. Since a bounded operator maps relatively compact sets to relatively compact sets, 
	we thus obtain that on an atomic Banach lattice with order continuous norm every bounded operator is AM-compact. 
\end{remark}

As explained in Appendix~\ref{appendix:AM-compact} every integral operator on an $L^p$-space is AM-compact.
This is why the following theorem, our first main result, has a broad range of applications.

\begin{theorem} \label{thm:convergence-AM-compact}
	Let $S$ be a commutative semigroup that generates a divisible group $G = \langle S\rangle$
	and let $\cT = (T_t)_{t\in S}\subseteq \cL(E)$ be a positive and bounded representation of $S$
	on a Banach lattice $E$ such that $T_s $ is AM-compact for some $s\in S$.
	Assume in addition that $\cT$ possesses a quasi-interior fixed point.
	
	Then $\cT$ is strongly convergent.
\end{theorem}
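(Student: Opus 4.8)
The plan is to combine the JdLG decomposition theorem with the triviality result for group representations on atomic Banach lattices (Theorem~\ref{thm:group-on-atomic-bl}). Since $\cT$ is positive, bounded, and $T_s$ is AM-compact, I first want to establish that all orbits $\{T_t x : t \in S\}$ are relatively weakly compact, so that Theorem~\ref{thm:JdLG} applies. The point is that for $t \ge s$ (in the preorder \eqref{eq:pre-order-on-sg}), say $t = s + r$, we have $T_t = T_s T_r$, and $T_r$ maps the orbit of $x$ into an order interval — indeed, if $h$ is the quasi-interior fixed point and $x \in E_h$ with $|x| \le ch$, then $|T_r x| \le T_r(ch) = ch$ for every $r$, so $\{T_r x : r \in S\}$ lies in the order interval $[-ch, ch]$; applying the AM-compact operator $T_s$ then gives a relatively compact, hence relatively weakly compact, set. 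The finitely many "small" orbit elements with $t \not\ge s$ do not affect relative weak compactness. For general $x \in E$ one approximates by elements of the dense ideal $E_h$ and uses boundedness together with a standard argument that a bounded net whose tail is relatively weakly compact has relatively weakly compact closure; alternatively one notes $T_s$ being AM-compact on a Banach lattice with a quasi-interior point implies a weak compactness property on the whole space. I expect this first step to require a little care but no deep idea.

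Next I apply Theorem~\ref{thm:JdLG} to obtain the positive projection $P \in \cL(E)$ commuting with all $T_t$, preserving fixed points, together with the bounded positive group representation $(S_t)_{t \in G} \subseteq \cL(PE)$ extending $T_t|_{PE}$, and the property that $0$ lies in the weak closure of $\{T_t x : t \in S\}$ for each $x \in \ker P$. By Proposition~\ref{prop:range-of-positive-projection}, $PE$ is again a Banach lattice (with an equivalent norm) and $Ph$ is a quasi-interior point of $(PE)_+$; moreover, since $P$ preserves fixed points, $Ph = h$ is a fixed point of $(S_t)_{t \in G}$, so $(S_t)_{t \in G}$ is a bounded positive group representation of the divisible group $G$ on $PE$ with a quasi-interior fixed point. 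The crucial remaining input is that $PE$ is atomic with order continuous norm: this is where AM-compactness enters again, since $S_s$ (a restriction of an AM-compact operator) is an invertible AM-compact operator on $PE$, and an invertible operator on $PE$ that maps the order interval $[-h, h]$ to a relatively compact set forces that order interval — and hence, by homogeneity and the lattice structure, every order interval of $PE$ — to be relatively compact; by Remark~\ref{rem:atomic-BL-operator-AM-compact} this means $PE$ is atomic with order continuous norm.

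With that in hand, Theorem~\ref{thm:group-on-atomic-bl} (using that a divisible commutative group satisfies condition~(i), as noted before its statement, via Proposition~\ref{prop:divisible-groups}) tells us that $(S_t)_{t \in G}$ is trivial, i.e.\ $S_t = \id_{PE}$ for all $t \in G$, and in particular $T_t|_{PE} = \id_{PE}$ for all $t \in S$. Thus $PE$ consists entirely of fixed points of $\cT$. Finally I assemble the convergence statement: for arbitrary $x \in E$ write $x = Px + (x - Px)$ with $x - Px \in \ker P$. Then $T_t(Px) = Px$ for all $t$, so the $Px$-part is constant, while for the $\ker P$ part I need $\lim_{t \in S} T_t(x - Px) = 0$ in norm. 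Here the JdLG theorem only gives $0$ in the \emph{weak} closure of the orbit; upgrading to norm convergence of the net is the main obstacle. I expect to handle it by the standard argument that on $\ker P$ the operators act as a "flight" part: since $P$ is the mean ergodic / JdLG projection, $\ker P$ is precisely the closed span (or closure) of $\{x - T_t x : x \in E, t \in S\}$ together with vectors whose orbit weakly accumulates at $0$; combined with relative weak compactness of orbits and the fact that $P$ is the limit of an appropriate net, one gets that the net $(T_t z)_{t \in S}$ converges to $0$ in norm for $z \in \ker P$. (If the cleanest route is instead to invoke that a relatively weakly compact orbit whose weak closure contains $0$ must have the net converge to $0$ along the directed set $S$ because $P T_t = T_t P \to P$ strongly and $PT_t z = 0$, I would use that.) Putting the two parts together yields $\lim_{t \in S} T_t x = Px$ for every $x \in E$, so $\cT$ is strongly convergent.
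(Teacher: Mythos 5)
Your overall architecture---order-bounded orbits pushed through the AM-compact operator $T_s$, the JdLG decomposition, Proposition~\ref{prop:range-of-positive-projection} to turn $PE$ into a Banach lattice with quasi-interior fixed point $Py=y$, compactness of order intervals in $PE$ via invertibility of $T_s|_{PE}$, and finally Theorem~\ref{thm:group-on-atomic-bl}---is exactly the paper's. But two of your steps do not go through as stated. First, the full orbit $\{T_tx : t\in S\}$ need not be relatively weakly compact: the ``head'' $\{T_tx : t\not\geq s\}$ is in general an \emph{infinite} subset of the order interval $[-cy,cy]$ (for $S=(0,\infty)$ and $s=1$ it is indexed by all $t\in(0,1)$), and since $E$ is an arbitrary Banach lattice here, order intervals need not be weakly compact; your claimed principle that a bounded set whose tail is relatively weakly compact is itself relatively weakly compact is false (take the unit vectors of $c_0$ as the head and a single point as the tail). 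The paper's fix is to pass to the subsemigroup $S'\coloneqq s+S$, whose representation has \emph{every} orbit element of the form $T_sT_rx\in T_s[-cy,cy]$ and hence relatively \emph{norm} compact orbits; strong convergence of $(T_t)_{t\in S'}$ implies that of $\cT$, and $S'$ still generates $G$.

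Second, and more seriously, your treatment of $\ker P$ is not a proof. Neither ``$0$ lies in the weak closure of the orbit'' nor an alleged strong convergence $T_tP\to P$ (which is not established and is not part of Theorem~\ref{thm:JdLG}) yields norm convergence $T_tz\to 0$: the translation group on $L^2(\R)$ has every orbit weakly null while $\norm{T_tz}$ is constant, so a compactness input is indispensable at precisely this point. The correct argument---the second place where AM-compactness is used---is that the orbit of $z\in\ker P$ is relatively norm compact by the first step, so the net converging weakly to $0$ provided by Theorem~\ref{thm:JdLG}(b) has a subnet converging in norm, necessarily to $0$; boundedness of the representation then upgrades ``some $T_tz$ has norm less than $\eps$'' to $\lim_{t}T_tz=0$. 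You have all the ingredients for this but do not assemble them. (A minor further gloss: deducing compactness of \emph{all} order intervals of $PE$ from compactness of $[-y,y]$ requires a truncation argument, since $y$ is only quasi-interior and not an order unit; the paper instead shows $[x,z]_{PE}\subseteq T_{2s}[S_{-2s}x,S_{-2s}z]_E$ directly for arbitrary $x\le z$ in $PE$.)
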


\begin{proof}
	Consider the subsemigroup $S'\coloneqq s + S$ and its positive and bounded representation $\cS \coloneqq (T_t)_{t\in S'}$.
	We show that $\cS$ is strongly convergent; a moment of reflection shows that this implies strong convergence of $\cT$.

	We start by showing that the set $\{T_tx: t\in S' \} \subseteq E$ is relatively compact for every $x\in E$. It suffices (see \cite[Corollary~A.5]{nagel2000}) to consider $x \in E_y$, where $y$ is a quasi-interior $\cT$-fixed point. Then $\abs{x} \le c y$ for some $c > 0$ and therefore $\abs{T_t x} \le c y$ for all $t\in S$. 
	Hence, $T_{s+t}x = T_s T_t x$ is contained in the relatively compact set $T_s[-cy,cy]$ for each $t\in S$.
	
	Therefore, we can apply the Jacobs--de Leeuw--Glicksberg decomposition, Theorem~\ref{thm:JdLG}, to the representation $\cS$ of $S'$.
	Let $P$ be the projection from that theorem and consider the decomposition $E=PE \oplus \ker P$.
	We show next that $\lim_{t\in S'} T_t x=0$ for all $x\in \ker P$. So fix $x\in \ker P$. By assertion (b) of Theorem~\ref{thm:JdLG} there exists 
	a net in $\{T_tx: \; t\in S'\}$ that converges weakly to $0$; this net
	in turn has a subnet that converges to $0$ in norm as the orbit of $x$ is relatively compact.
	As $\cT$ is bounded, this readily implies that $\lim_{t\in S'} T_t x=0$.

	To complete the proof it suffices to show that $T_t |_{PE} = \id_{PE}$ for any $t\in S'$.
	We obtain from Proposition~\ref{prop:range-of-positive-projection} that $F\coloneqq (PE,\norm{\argument}_{PE})$
	is itself a Banach lattice with respect to the order of $E$ and an equivalent norm $\norm{\argument}_{PE}$ and that
	$y= Py$ is a quasi-interior point of $F_+$.
	It is easy to see that $S'$ also generates the group $G$, so we obtain from assertion (a) of Theorem~\ref{thm:JdLG} that there exists a bounded and positive representation $(S_t)_{t\in G}\subseteq \cL(F)$ 
	of $G$ on $F$ such that $S_t = T_t|_{F}$ for each $t\in S'$.

	Finally, we show that $F$ is atomic with order continuous norm. 
	For this, by \cite[Cor 21.13]{aliprantis1978} or \cite[Theorem~6.1]{wnuk1999} it suffices to show that each order interval in $F$ is compact.  Let $[x,z]_F \subseteq F$. We have $S_{2s} = T_{2s}|_F$ as $2s \in S'$ and thus
	\[	[x,z]_F = T_{2s} S_{-2s}[x,z]_F \subseteq T_{2s} [S_{-2s}x, S_{-2s}z]_F \subseteq T_{2s} [S_{-2s}x, S_{-2s}z]_E.\]
	The latter set is relatively compact in $E$ because $T_{2s}$ is AM-compact. 
	Hence, the closed set $[x,z]_F$ is compact in $E$ and thus also in $F$.

	It now follows from Theorem~\ref{thm:group-on-atomic-bl} that every operator $S_t$ is the identity on $F$, so in particular $T_t|_{PE} = \id_{PE}$ for every $t\in S'$.
\end{proof}

\begin{remark} \label{rem:existence-of-quasi-interior-fixed-point}
	In Theorem~\ref{thm:convergence-AM-compact} the existence of a quasi-interior fixed point is crucial: let $\cT = (T_t)_{t \in [0,\infty)}$ be the Gaussian semigroup on $L^1(\R)$; this semigroup is not strongly convergent. However, each operator $T_t$ is AM-compact (cf.\ Proposition~\ref{prop:integraloperatorsAMcompact}), so $\cT$ fulfils all assumptions of Theorem~\ref{thm:convergence-AM-compact} except 
	that it does not possess a quasi-interior fixed point.
\end{remark}

Before we proceed to our second main result, let us point out once again the crucial role of the algebraic structure of the semigroup $S$ in Theorem~\ref{thm:convergence-AM-compact}. Note that the theorem is applicable to the additive group of strictly positive rationals. The situation is different, though, for the dyadic numbers:

\begin{example} \label{ex:dyadic}
	The additive semigroup $D_{> 0} \coloneqq \big\{ k2^{-n}:  k \in \N\text{, } n \in \N_0\big\}$ 
generates the group $D \coloneqq \big\{ k2^{-n}:  k \in \Z\text{, } n \in \N_0\big\}$ which is not divisible. Hence, according to Theorem~\ref{thm:group-on-atomic-bl}
there exists a non-trivial bounded and positive group representation $\cT = (T_t)_{t \in D}$ of $D$ on an atomic Banach lattice with order continuous norm 
and $\cT$ can be chosen to possess a quasi-interior fixed point. 
The restriction of this group representation to $D_{> 0}$ fulfils the assumptions of Theorem~\ref{thm:convergence-AM-compact} but it 
is not strongly convergent by Proposition~\ref{prop:bounded-grp-rep-is-not-convergent} below.
Hence, Theorem~\ref{thm:convergence-AM-compact} fails for semigroup representations of $D_{> 0}$. Since $D_{> 0}$ 
is homeomorphic to the strictly positive rationals, this stresses that the crucial assumption for our results is the algebraic structure 
of the semigroup and not any kind of topological structure.
\end{example}

In the above example we made use of the following simple observation.

\begin{proposition} \label{prop:bounded-grp-rep-is-not-convergent}
	Let $G$ be a commutative group and let $(T_t)_{t \in G}$ be a bounded group representation on a Banach space $E$. 
	Consider a subsemigroup $S$ of $G$ with $\langle S \rangle = G$. If the semigroup representation $(T_t)_{t \in S}$ is strongly convergent, then $T_t = \id_E$ for all $t \in G$. 
\end{proposition}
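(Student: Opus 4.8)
The plan is to let $x \in E$ be arbitrary and show $T_t x = x$ for every $t \in G$, using that the net $(T_t x)_{t \in S}$ converges, say to some $y \in E$. The key structural fact to exploit is that $S$ generates $G$, so every $g \in G$ can be written as $g = s - r$ with $s, r \in S$ (here I write the group operation additively, as is natural for the commutative case; the general case follows by the same bookkeeping with multiplicative notation). The boundedness of the \emph{whole} group representation $(T_t)_{t \in G}$ — not just of the sub-semigroup representation — is what makes the argument work: each $T_t$ is a bounded invertible operator, and $M \coloneqq \sup_{t \in G}\norm{T_t} < \infty$.

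**Main steps.** First I would identify the limit $y$ as a fixed point of the semigroup $(T_t)_{t \in S}$: by the defining property of net convergence along the directed semigroup, for fixed $s \in S$ we have $T_s y = T_s \lim_{t \in S} T_t x = \lim_{t \in S} T_{s+t} x = y$, where continuity of $T_s$ and cofinality of $s + S$ in $S$ are used. Second, since each $T_s$ is invertible and fixes $y$, the operator $T_{-s} = (T_s)^{-1}$ also fixes $y$, and hence every $T_g$ with $g \in G$ fixes $y$ (writing $g = s_1 - s_2$). Third — and this is the crux — I claim $T_s x = x$ already for all $s \in S$, hence $x = y$, and then $T_g x = T_g y = y = x$ for all $g \in G$. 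To see $T_s x = x$: apply $T_{-s}$ (bounded by $M$) to the convergence $T_{s + t} x \to y$; since $T_{-s}$ is continuous, $T_{-s} T_{s+t} x = T_t x \to T_{-s} y = y$, so the net $(T_t x)_{t \in S}$ converges to $y$ along $S$ — which we already knew. That alone is not enough; the extra leverage comes from comparing the limit computed along two cofinal ``shifts.'' Concretely, fix $s \in S$; the net $(T_{s+t}x)_{t\in S}$ is a subnet of $(T_t x)_{t \in S}$ and therefore also converges to $y$, i.e. $T_s(T_t x) \to y$; but $T_s$ is continuous so $T_s(T_t x) \to T_s y = y$, consistent but again not conclusive. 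The decisive move is instead: $T_s x = T_s \bigl(\lim_{t\in S} T_{-s} T_{s+t} x\bigr)$? Let me phrase it cleanly — since $\langle S\rangle = G$, for the given $t_0 \in S$ we can write $-s \cdot$ (shift) so that $T_t x$ for $t$ ranging over $S$ is cofinally the same as $T_{s+t}x = T_s T_t x$; taking limits, $y = T_s y$, and also $y = \lim T_t x$ while $T_s y = \lim T_s T_t x = \lim T_{s+t} x = y$. To extract $T_s x = x$ I use that $s \in S$ means the net is \emph{eventually} indexed beyond any point, but $x$ itself equals $T_0 x$ and $0 \notin S$ in general; so instead I argue: for any $g \in G$, $T_g y = y$; pick $g = -t_0$ where $t_0$ is an index; then along the net, $T_{-t_0} T_{t_0 + t} x = T_t x \to T_{-t_0} y = y$. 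Now also directly $T_{-t_0}\bigl(T_{t_0} x\bigr) = x$, and $T_{t_0} x$ lies in the net (at index $t_0$), with the net converging to $y$; applying the bounded operator $T_{-t_0}$ does not by itself pin $x = y$. The clean finish: since $(T_t x)_{t \in S} \to y$ and $(T_t y)_{t \in S} \to y$ while $T_t y = y$ for all $t$, subtract to get $(T_t(x - y))_{t \in S} \to 0$; then for arbitrary $g \in G$ write $g = s_1 - s_2$ and estimate $\norm{T_g(x-y)} = \norm{T_{s_1}T_{-s_2}(x-y)} \le M \norm{T_{-s_2}(x-y)}$, and $T_{-s_2}(x-y) = T_{-s_2}\lim_t T_t(x-y)$ — here $T_{-s_2}$ commutes past the limit giving $\lim_t T_{t - s_2}(x-y)$, a net indexed by $t \in S$ but shifted; since eventually $t - s_2 \in S$ (cofinally) this limit is again $0$, so $T_g(x - y) = 0$ for all $g \in G$, in particular $T_0(x-y) = x - y = 0$.

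**Expected main obstacle.** The delicate point is the manipulation in the last step: justifying that one may push the invertible operator $T_{-s_2}$ through the limit of the net $(T_t(x-y))_{t\in S}$ and still obtain $0$, despite the shifted index set $\{t - s_2 : t \in S\}$ not literally being $S$. The resolution is that $\{t - s_2 : t \in S\} = -s_2 + S$ is a cofinal subset of $G$ under the ordering, and more importantly the original convergence statement $\lim_{t \in S} T_t(x-y) = 0$ means: for every $\eps > 0$ there is $s \in S$ with $\norm{T_{s+t}(x-y)} \le \eps$ for all $t \in S$; applying $T_{-s_2}$ (norm $\le M$) and choosing the witness index appropriately then yields $\norm{T_{-s_2}(x-y)} = \norm{T_{-s_2}\lim_t T_t(x-y)} = \lim_t \norm{T_{t-s_2}(x-y)} = 0$, using continuity of $T_{-s_2}$ and that $t - s_2$ runs cofinally through indices where the bound applies. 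Once this bookkeeping is in place, everything else is routine: identifying $y$ as a fixed vector, using invertibility to propagate fixedness from $S$ to $G$, and concluding $x = y$ hence $T_t x = x$ for all $t \in G$.
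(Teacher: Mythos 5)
Your overall plan---identify the limit $y$ of the net $(T_tx)_{t\in S}$ as a fixed point, propagate fixedness of $y$ to all of $G$ by invertibility, and then show $x=y$---is sound, and the first two steps are carried out correctly. The problem is the third step, which is where all the content lies. Your ``clean finish'' rests on the identity $T_{-s_2}(x-y) = T_{-s_2}\lim_{t\in S} T_t(x-y)$, and this is circular: the right-hand side equals $T_{-s_2}(0)=0$, so the identity asserts precisely $T_{-s_2}(x-y)=0$, i.e.\ $x=y$, which is the statement you are trying to prove. The vector $x-y$ is not the limit of the net $(T_t(x-y))_{t\in S}$; it is the value the orbit map would take at the (absent) index $0\notin S$. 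Nothing in the surrounding discussion of cofinality repairs this: knowing $\lim_{t}\norm{T_{t-s_2}(x-y)}=0$ says nothing about the single number $\norm{T_{-s_2}(x-y)}$.

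The fix is exactly the move you considered and then dismissed as ``not by itself pinning $x=y$'': write $x-y = T_{-s}\bigl(T_s(x-y)\bigr)$ for a single \emph{well-chosen} $s\in S$. Since $T_sy=y$ and $T_tx\to y$, for every $\eps>0$ you can pick $s$ with $\norm{T_s(x-y)}=\norm{T_sx-y}\le\eps$, whence $\norm{x-y}\le M\eps$, and letting $\eps\to 0$ gives $x=y$; the boundedness of the inverses is used only here, on one vector that is already known to be small. This is the paper's one-line proof in a slightly different guise: there one fixes $t\in S$, chooses $s$ so that $T_sx$ and $T_{s+t}x$ are both $\eps$-close to the limit, and writes $x-T_tx = T_{-s}(T_sx - T_{s+t}x)$, so that $\norm{x-T_tx}\le 2M\eps$; this yields $T_t=\id_E$ for all $t\in S$ directly, and hence on all of $G$, without ever naming the limit vector $y$.
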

\begin{proof}
	Assume that $(T_t)_{t \in S}$ is strongly convergent. Let $x \in E$, $t\in S$ and $M \coloneqq \sup_{t \in G} \norm{T_t}$. For any $\eps>0$ we find $s\in S$ such that
	\begin{align*}
		\norm{x-T_tx} = \norm{T_{-s}(T_s x - T_{t+s}x)} \le M \eps.
	\end{align*}
	This show that $T_t = \id_E$ for all $t\in S$ from which the assertion follows.
\end{proof}

Now we come to our second main result: we consider the case where the representation of the semigroup does not necessarily contain an AM-compact operator
but where one operator dominates a non-trivial AM-compact operator. 
Although this condition looks rather technical at a first sight, 
it appears frequently in models from mathematical biology (see for instance \cite{bobrowski2007} for a model from genetics and \cite{du2011} for a model of competing species).

\begin{theorem} \label{thm:convergence-partial-AM-compact}
	Let $S$ be a commutative semigroup that generates a divisible group $G = \langle S\rangle$
	and let $\cT = (T_t)_{t\in S}\subseteq \cL(E)$ be a positive and bounded representation of $S$
	on a Banach lattice $E$ with order continuous norm. Assume that $\cT$ 
	possesses a quasi-interior fixed point $y\in E_+$ and 
	that $\cT$ has the following two properties:
	\begin{enumerate}[\upshape (a)]
		\item Every super fixed point of $\cT$ is a fixed point of $\cT$.
		\item For every fixed point $x > 0$ of $\cT$ there exists $s\in S$ and an AM-compact operator $K \geq 0$ such that $T_s \geq K$ and $Kx>0$.
	\end{enumerate}
	Then $\cT$ is strongly convergent.
\end{theorem}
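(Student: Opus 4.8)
The plan is to reduce Theorem~\ref{thm:convergence-partial-AM-compact} to the first main result, Theorem~\ref{thm:convergence-AM-compact}, by using the JdLG decomposition together with the two hypotheses (a) and (b) to show that the ``reversible part'' $PE$ of the representation is in fact trivial. As in the proof of Theorem~\ref{thm:convergence-AM-compact}, I would first check that all orbits $\{T_t x : t \in S\}$ are relatively weakly compact: since $E$ has order continuous norm, order intervals are relatively weakly compact (they are even weakly compact, being closed), and for $x \in E_y$ one has $\abs{T_t x} \le c\,y$ for all $t$, so the orbit lies in the relatively weakly compact set $[-cy, cy]$; the general case follows by density of $E_y$ and boundedness of $\cT$ (cf.\ \cite[Corollary~A.5]{nagel2000}). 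Thus Theorem~\ref{thm:JdLG} applies and yields a positive projection $P$ commuting with all $T_t$, a bounded positive group representation $(S_t)_{t\in G}$ on $F \coloneqq (PE, \norm{\argument}_{PE})$ with $S_t = T_t|_F$ for $t \in S$, and $\ker P = \{x : 0 \in \overline{\{T_t x\}}^{\,w}\}$. By Proposition~\ref{prop:range-of-positive-projection}, $F$ is a Banach lattice with order continuous norm (order continuity is inherited since the norm is equivalent and the order is the induced one) and $Py = y$ is a quasi-interior point of $F_+$.

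The heart of the matter is to prove that $(S_t)_{t\in G}$ is the trivial representation on $F$; once this is known, $T_t|_{PE} = \id$ for all $t \in S$, and combining with the fact that $\lim_{t\in S'} T_t x = 0$ for $x \in \ker P$ (same subnet-of-a-weakly-null-net argument as before, but here using weak compactness of order intervals plus AM-compactness is not needed — we only need that the orbit has a weak cluster point which, on a relatively weakly compact orbit, combined with $0 \in \overline{\{T_tx\}}^w$, forces norm convergence to $0$ via boundedness; actually the cleanest route is to observe $\ker P$ is $\cT$-invariant and $P$ preserves fixed points, so one argues directly that the net $T_t x$ converges to $Px$ for every $x$) we conclude strong convergence of $\cT$. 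To show triviality of $(S_t)$, the strategy is to feed hypothesis~(b) into the structure of $F$ and invoke Theorem~\ref{thm:group-on-atomic-bl}: it suffices to show $F$ is \emph{atomic}, equivalently that every order interval $[x,z]_F$ is compact. Here I would use (b) applied to the quasi-interior fixed point $y$: there is $s \in S$ and an AM-compact $K \ge 0$ with $T_s \ge K$ and $Ky > 0$. The operator $PK P$ (or rather the relevant compression) is then AM-compact on $F$; but a single AM-compact operator is not enough to make all of $F$ atomic — one needs it to ``see'' the whole space. This is exactly where hypotheses (a) and (b) must be combined.

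The key device, which I expect to be the main obstacle, is the following: let $B \coloneqq \{x \in F : x \text{ lies in the band generated in } F \text{ by the atoms}\}$, or more usefully consider the carrier of the AM-compact perturbation. Using (b) for \emph{every} fixed point $x > 0$, and using that $F$ is spanned (as a band) by the $\cT$-fixed points together with property (a), one shows that the AM-compact operators produced by (b) have a supremum whose range is order dense in $F$, forcing every order interval in $F$ to be compact. Concretely: set $h$ to be a quasi-interior fixed point; for the band $F$ one shows via (a) that super fixed points are fixed, which lets one run an averaging/lattice argument (as in the classical Greiner--Pich\'or--Rudnicki arguments) proving that the fixed space of $(S_t)$ is a sublattice and is large; then (b) gives, for each strictly positive fixed $x$, an AM-compact $K$ with $Kx > 0$ and $K \le T_s = S_s$; since $S_s$ is a lattice isomorphism on $F$, the relatively compact image $K[0,x]$ together with $S_s[0,x] \supseteq [0, Kx]$-type estimates forces $[0,Kx]_F$ to be relatively compact, hence (summing over the fixed points whose suprema fill out $h$) $[0,h]_F$ is relatively compact, so $F$ is atomic. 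Then Theorem~\ref{thm:group-on-atomic-bl} (its implication (i)$\Rightarrow$(ii), applicable since $G$ is divisible) gives that $(S_t)_{t\in G}$ is trivial, completing the reduction. I expect the delicate points to be (i) verifying that the compression of an AM-compact operator to $F$ is AM-compact in the $F$-lattice structure (using $\abs{x}_F = P\abs{x}$ and relative compactness transferring under the bounded operator $P$ and under the norm equivalence), and (ii) the lattice-theoretic step that uses (a) to conclude that the AM-compact operators from (b), indexed over all positive fixed points, collectively dominate enough of $F$ — this is the genuinely new ingredient replacing the irreducibility hypothesis of \cite{pichor2000, gerlach2013b}.
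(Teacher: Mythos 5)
Your overall architecture matches the paper's: relatively weakly compact orbits, the JdLG projection $P$, norm convergence to $0$ on $\ker P$, and atomicity of $F = PE$ followed by Theorem~\ref{thm:group-on-atomic-bl}. But the proposal is missing the one technical device that makes both halves work. The paper's key step is Lemma~\ref{lem:singularpartconvergence}: since $E$ has order continuous norm, the regular AM-compact operators form a band $\cK$ in $\cL^r(E)$, so one decomposes $T_t = K_t + R_t$ with $K_t \in \cK$ and $R_t \in \cK^{\bot}$ both positive; the submultiplicativity $R_{t+r} \le R_t R_r$ shows that $(R_t y)_{t\in S}$ is decreasing, its limit $z$ satisfies $T_t z \ge R_t z \ge z$, so $z$ is a super fixed point, hence a fixed point by (a), hence $K_t z = 0$ for all $t$, hence $z = 0$ by (b). This yields the quantitative statement that for every $\eps>0$ there exist $t \in S$ and an AM-compact $0 \le K_t \le T_t$ with $\norm{(T_t-K_t)y} < \eps$, and it is exactly this statement that is used in both halves of the proof.

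Without it, two of your steps fail. First, for $x \in \ker P$ you assert that $0$ lying in the weak closure of a relatively weakly compact orbit ``forces norm convergence to $0$ via boundedness''; this is false in general (the translation group on $L^2(\R)$ has weakly null, relatively weakly compact, norm-preserving orbits). The paper upgrades weak to norm convergence by writing $T_r x_\alpha = K_r x_\alpha + (T_r-K_r)x_\alpha$, bounding the second summand by $(T_r-K_r)y$ via the lemma (using $\abs{x_\alpha} \le y$) and the first via relative compactness of $K_r[-y,y]$. Second, your atomicity argument --- that each $[0,Kx]_F$ is relatively compact and that ``summing over the fixed points whose suprema fill out $h$'' gives compactness of $[0,h]_F$ --- does not go through: hypothesis (b) only guarantees $Kx>0$, so the vectors $Kx$ need not exhaust $[0,y]$ in any order-theoretic sense; $[0,Kx]$ is not contained in the relatively compact set $K[0,x]$ unless $K$ is interval preserving; and a union of compact order intervals need not have compact closure. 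The paper instead covers $[u,v]_F \subseteq T_r[-cy,cy]_E + B_E(0,\eps) \subseteq K_r[-cy,cy]_E + B_E(0,2\eps)$, again using the lemma. So the skeleton is right, but the ingredient you yourself flagged as the delicate point --- the band decomposition of the semigroup into AM-compact and singular parts and the monotone-limit argument driven by hypotheses (a) and (b) --- is the actual content of the proof, and without it the argument does not close.
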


The key to the proof of Theorem~\ref{thm:convergence-partial-AM-compact} is the following lemma.

\begin{lemma}
\label{lem:singularpartconvergence}
	Under the assumptions of Theorem~\ref{thm:convergence-partial-AM-compact} the following holds:
	for every $\eps>0$ there exists $t\in S$ and an AM-compact
	operator $0\leq K_t \leq T_t$ such that $\norm{(T_t-K_t)y}<\eps$.
\end{lemma}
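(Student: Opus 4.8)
The plan is to prove the stronger statement that $\sup\mathcal{F}=y$, where
	\[
		\mathcal{F}:=\bigl\{\,Ky : K\in\cL(E)\text{ is AM-compact and }0\le K\le T_t\text{ for some }t\in S\,\bigr\}.
	\]
	Since $Ky\le T_ty=y$ for every such $K$, the set $\mathcal{F}$ is contained in the order interval $[0,y]$. As $E$ has order continuous norm it is Dedekind complete, so $\cL^r(E)$ is a Dedekind complete vector lattice and $z:=\sup\mathcal{F}$ exists in $E$. First I would check that $\mathcal{F}$ is upward directed: given AM-compact operators $K_i$ with $0\le K_i\le T_{t_i}$ for $i=1,2$, each $K_iT_{t_{3-i}}$ is again AM-compact, is dominated by $T_{t_1+t_2}$, and acts on $y$ by $(K_iT_{t_{3-i}})y=K_iy$ because $y$ is a fixed vector; hence $(K_1T_{t_2})\vee(K_2T_{t_1})$ is a positive operator dominated by $T_{t_1+t_2}$, is AM-compact (being squeezed between $0$ and the AM-compact operator $K_1T_{t_2}+K_2T_{t_1}$), and its value at $y$ dominates both $K_1y$ and $K_2y$. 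Since the norm is order continuous, the increasing net $(\mathcal{F},\le)$ then converges to $z$ in norm; in particular, for every $\delta>0$ there is an AM-compact $K$ with $0\le K\le T_t$ for some $t\in S$ such that $f:=Ky$ satisfies $0\le f\le z$ and $\norm{z-f}<\delta$.

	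The next step is to show that $z$ is a fixed point of $\cT$. For $s\in S$ and $f=Ky\in\mathcal{F}$, the operator $T_sK$ is AM-compact and dominated by $T_{s+t}$, so $T_sf=(T_sK)y\in\mathcal{F}$ and hence $T_sf\le z$; letting $f$ run through the net $\mathcal{F}$ and using order continuity of the norm together with boundedness of $T_s$ gives $T_sz\le z$. Consequently $w:=y-z\ge0$ satisfies $T_sw=y-T_sz\ge y-z=w$ for every $s$, so $w$ is a super fixed point of $\cT$ and therefore a fixed point by assumption~(a); in particular $T_sz=T_sy-T_sw=z$, so $z$ is fixed.

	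The heart of the argument is to exclude $z\neq y$. Assume $w=y-z>0$. By assumption~(b) there are $s_0\in S$ and an AM-compact operator $K_0\ge0$ with $K_0\le T_{s_0}$ and $K_0w>0$. Choose $f=Ky\in\mathcal{F}$ with $0\le K\le T_t$ and $\norm{z-f}<\delta$, where $\delta$ will be fixed below. The key observation is that the operator
	\[
		\hat K:=T_{s_0}K+K_0(T_t-K)
	\]
	is positive, is AM-compact (a sum of two AM-compact operators), satisfies $\hat K\le T_{s_0}K+T_{s_0}(T_t-K)=T_{s_0+t}$ because $K_0\le T_{s_0}$ and $T_t-K\ge0$, and acts on $y$ by $\hat Ky=T_{s_0}f+K_0(y-f)$. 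Hence $\hat Ky\in\mathcal{F}$, so $\hat Ky\le z$; combined with $y-f=(z-f)+w\ge w$ this gives $z\ge T_{s_0}f+K_0w$, whence $0\le z-T_{s_0}f$ and therefore $\norm{z-T_{s_0}f}\ge\norm{K_0w}$. On the other hand, since $z$ is fixed, $\norm{z-T_{s_0}f}=\norm{T_{s_0}(z-f)}\le M\norm{z-f}<M\delta$, where $M:=\sup_{t\in S}\norm{T_t}$. Choosing $\delta:=\norm{K_0w}/(2M)$ produces a contradiction, so $w=0$ and $z=y$.

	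Finally, given $\eps>0$, the norm convergence of $\mathcal{F}$ to $z=y$ provides an AM-compact operator $K_t$ with $0\le K_t\le T_t$ for some $t\in S$ and $\norm{y-K_ty}<\eps$; since $T_ty=y$, this is exactly $\norm{(T_t-K_t)y}<\eps$. The step I expect to be the main obstacle is discovering the operator $\hat K$: one needs an AM-compact operator that is still dominated by a single $T_u$, reproduces almost all of $z$ on $y$, and additionally captures the extra mass $K_0w$, and it is precisely the identity $\hat K=T_{s_0}K+K_0(T_t-K)\le T_{s_0}\bigl(K+(T_t-K)\bigr)=T_{s_0}T_t$ that makes this simultaneously possible. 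The remaining ingredients---stability of AM-compactness under composition with bounded positive operators, solidity of the AM-compact operators, and order continuity of the norm---are routine.
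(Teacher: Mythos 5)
Your argument is correct, and it reaches the same intermediate milestones as the paper's proof --- identify a residual fixed point, use assumption~(a) to make it a genuine fixed point, and use assumption~(b) to kill it --- but the technical implementation is genuinely different. The paper works with the band $\cK$ of regular AM-compact operators in $\cL^r(E)$ and decomposes each $T_t = K_t + R_t$ with $K_t \in \cK$ and $R_t \in \cK^\bot$; the key algebraic fact is the submultiplicativity $R_{t+r} \le R_t R_r$, which makes the net $(R_t y)$ decreasing, and the identity $T_t z = z$ together with $R_t z \ge z$ immediately forces $K_t z = 0$, so assumption~(b) applies to $z$ in one line. You instead take the supremum $z$ of the upward directed family $\{Ky\}$ and, lacking the canonical decomposition, must show by hand that a fixed point $w = y - z > 0$ with $K_0 w > 0$ contradicts maximality of $z$; your operator $\hat K = T_{s_0}K + K_0(T_t - K) \le T_{s_0+t}$ is exactly the right device for this, and all the supporting facts you invoke (solidity of the AM-compact operators when the norm is order continuous, stability under composition, pointwise computation of suprema of directed families on the positive cone) are available. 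What the paper's route buys is brevity --- the band projection does the bookkeeping that your $\hat K$ construction and the $\delta$-argument do explicitly --- while your route makes visible that only the ideal property of the AM-compact operators is needed, not the full band decomposition of each $T_t$. (Incidentally, your $z$ is precisely $y$ minus the paper's $z$, since the band component $K_t$ of $T_t$ satisfies $K_t y = \sup\{Ky : 0 \le K \le T_t,\ K \text{ AM-compact}\}$.)
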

\begin{proof}
	Since $E$ has order complete norm, the regular AM-compact operators form a band in $\cL^r(E)$, 
	see \cite[Prop 3.7.2]{meyer1991}, which we denote by $\cK$.
	Then for every $t\in S$ we have $T_t = K_t + R_t$ for certain uniquely determined positive operators $K_t \in \cK$ and $R_t \in \cK^\bot$. 
	Moreover,
	\[ T_{t+r} = K_tT_r + R_t T_r = K_tT_r + R_t K_r + R_t R_r\]
	for all $t,r\in S$.
	Since $K_tT_r+ R_tK_r$ is AM-compact and dominated by $T_{t+r}$,
	we have $K_{t+r} \ge K_tT_r + R_tK_r$. This in turn implies that $R_{t+r} \leq R_tR_r$.
	In particular, $R_{t+r}y \leq R_t T_ry = R_t y$ for all $t,r\in S$, i.e.\
	the net $(R_ty)_{t\in S} \subseteq E_+$ is decreasing and thus convergent as the norm is order continuous.
	Its limit $z\coloneqq \lim_{t\in S} R_ty$ fulfils
	\[ R_t z  = \lim_{r \in S} R_tR_r y \geq \lim_{r\in S} R_{t+r} y = z\]
	and hence $T_tz \geq R_t z \geq z$ for every $t\in S$. 
	By assumption (a) this implies that $T_tz=z$ and therefore $K_t z=0$ for every $t\in S$.
	Assumption~(b) implies that $z=0$. We have thus shown that $\lim_{t \in S} R_ty = 0$, which proves the lemma.
\end{proof}

\begin{proof}[Proof of Theorem~\ref{thm:convergence-partial-AM-compact}]
	We first show that $\{ T_t x : t\in S\}\subseteq E$ is relatively weakly compact for every $x\in E$.
	First let $x \in E_{y}$, i.e.\ $x \in [-cy,cy]$ for some $c > 0$; then $T_tx \in [-cy,cy]$ for all $t \in S$. 
	By the order continuity of the norm every order interval in $E$ is weakly compact and therefore $\{T_t x: t\in S\}$
	is relatively weakly compact.  Since $E_{y}$ is dense in $E$ and the representation is bounded, 
	it follows that the orbit of any $x\in E$ is relatively weakly compact (see for instance \cite[Corollary~A.5]{nagel2000}).
	
	We can thus apply Theorem~\ref{thm:JdLG}; let $P$ be the projection given by this theorem.	
	We are going to show first that $\lim T_tx=0$ for any $x\in \ker P$ and second that $T_t|_{PE} = \id_{PE}$ for all $t\in S$.
	By combining these two assertions we obtain the theorem.

	Let $z \in [0,y]$ and consider the vector $x \coloneqq z-Pz \in \ker P$.
	Since $Pz\leq Py =y$ we have $x \in [-y, y]$, i.e.\ $\abs{x} \leq y$.
	By assertion (b) of Theorem~\ref{thm:JdLG} there exists a net $(x_\alpha)_{\alpha\in\Lambda} \subseteq \{ T_t x : t\in S\}$
	such that $(x_\alpha)_{\alpha\in\Lambda}$ converges weakly to $0$. 
	Let $\eps > 0$. By Lemma~\ref{lem:singularpartconvergence} we find $r\in S$ and an AM-compact operator $0\leq K_r \leq T_r$
	such that $\norm{T_r y- K_ry} <\eps$.
	For every $\alpha \in \Lambda$ we thus have
	\begin{displaymath}
		\norm{T_r x_\alpha} \leq \norm{K_r x_\alpha} + \norm{(T_r-K_r) x_\alpha} 
		\leq \norm{K_r x_\alpha} + \varepsilon,
	\end{displaymath}
	where the second inequality follows from the fact that $\abs{(T_r-K_r) x_\alpha} \leq (T_r-K_r)y$.
	The net $(K_r x_\alpha)$ is contained in the relatively compact set $K_r[-y,y]$, so there exists a subnet of $(K_rx_\alpha)_{\alpha\in \Lambda}$
	that converges in norm; since it also converges weakly to $0$, it follows that $\norm{K_r x_\alpha} < \eps$
	for some $\alpha\in\Lambda$. In summary, this shows that $\norm{T_t x} < 2\eps$ for some $t\in S$.
	Since $\cT$ is bounded and $y$ is a quasi-interior point of $E_+$, this readily implies 
	that $\lim_{t\in S} T_t  x = 0$ for all $x\in \ker P=(\id_E-P)E$.

	Now we show that each $T_t$ acts trivially on $PE$. By Proposition~\ref{prop:range-of-positive-projection},
	$F\coloneqq(PE,\norm{\argument}_{PE})$ is itself a Banach lattice with respect to the order of $E$
	and an equivalent norm $\norm{\argument}_{PE}$; moreover, $y= Py$ is a quasi-interior point of $F_+$. We obtain from assertion (a) of Theorem~\ref{thm:JdLG} that there exists a bounded and positive representation $(S_t)_{t\in G}\subseteq \cL(F)$ 
	of $G$ on $F$ such that $S_t = T_t|_{F}$ for each $t\in S$.
	 
	Let $u,v \in F$ such that $u\leq v$.  We show that $[u,v]_F$ is totally bounded and thus compact in $F$. Given $\eps > 0$,
	first choose $c>0$ such that $[u,v]_F \subseteq [-cy,cy]_F+B_E(0,\eps)$, where $B_E(0,\eps)$ denotes the ball in $E$ of radius $\eps$
	centered at $0$. 
	Then, by Lemma~\ref{lem:singularpartconvergence}, we find $r\in S$ and an AM-compact operator $0\leq K_r\leq T_r$ 
	such that $\norm{(T_r-K_r)cy}<\eps$.  Thus we have
	\begin{align*}
		[u,v]_F &\subseteq [-cy,cy]_F + B_E(0,\eps) = T_rS_{-r}[-cy,cy]_F + B_E(0,\eps) \\
		&= T_r[-cS_{-r}y,cS_{-r}y]_F +B_E(0,\eps)  \subseteq T_r [-cy,cy]_E +B_E(0,\eps)  \\
		&\subseteq K_r[-cy,cy]_E + (T_r-K_r)[-cy,cy]_E + B_E(0,\eps) \\
		&\subseteq K_r[-cy,cy]_E + B_E(0,2\eps).
	\end{align*}
	Since $K_r$ is AM-compact, the set $K_r[-cy,cy]_E$ is totally bounded in $E$ and hence, so is $[u,v]_F$. Therefore, $[u,v]_F$ is compact in $E$ and thus in $F$.
	This shows that $F$ is an atomic Banach lattice with order continuous norm, cf.\ \cite[Cor 21.13]{aliprantis1978} or \cite[Theorem~6.1]{wnuk1999}.

	As $G$ is divisible and the quasi-interior point $y\in F_+$ is a fixed vector of the group representation $(S_t)_{t \in G}$,
	it follows from Theorem~\ref{thm:group-on-atomic-bl} that every operator $S_t$ is the identity on $F$. 
	In particular, $T_t|_{PE} = \id_{PE}$ for every $t\in S$ which completes the proof.
\end{proof}

We close the section with a discussion of the technical requirements (a) and (b) of Theorem~\ref{thm:convergence-partial-AM-compact}. 
The following proposition provides sufficient conditions for assumption (a) to hold.

\begin{proposition}
\label{prop:conditions-in-convergence-theorem}
	Let $E$ be a Banach lattice and let $\cT = (T_t)_{t\in S}$ be a positive and bounded 
	representation of a semigroup $S$ on $E$.
	Each of the following conditions implies that every super fixed point of $\cT$ is a fixed point of $\cT$ 
	and that the fixed space of $\cT$ is a sublattice of $E$.
	\begin{enumerate}[\upshape (a)]
		\item $E$ has strictly monotone norm, meaning that $\norm{f} < \norm{g}$ whenever $0 \le f < g$, and each operator in  $\cT$ is contractive.
		\item There exists a strictly positive $\phi \in E'_+$ such that $T_t' \phi \le \phi$ for all $t\in S$.
		\item The representation $\cT$ is irreducible.
		\item The space $E$ has order continuous norm and the adjoint $T_t'$ of each operator $T_t$ is a lattice homomorphism. 
	\end{enumerate}
\end{proposition}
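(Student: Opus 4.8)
The plan is to handle the four conditions separately but with a common strategy: for a super fixed point $x$ (so $x \ge 0$ and $T_t x \ge x$ for all $t$), show that in fact $T_t x = x$; then the statement about the fixed space being a sublattice follows by observing that if $x$ is a fixed point then so is $|x|$, because $|x| = |T_t x| \le T_t|x|$ makes $|x|$ a super fixed point, hence a fixed point.

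For (a), suppose $x$ is a super fixed point and $T_{t_0} x > x$ for some $t_0$. Since each operator is contractive, $\norm{x} \le \norm{T_{t_0} x}$, but strict monotonicity of the norm applied to $0 \le x < T_{t_0} x$ gives $\norm{x} < \norm{T_{t_0} x} \le \norm{x}$, a contradiction; hence $T_t x = x$ for all $t$. For (b), apply the strictly positive functional $\phi$ to $T_t x - x \ge 0$: using $T_t'\phi \le \phi$ we get $\applied{\phi}{T_t x} = \applied{T_t'\phi}{x} \le \applied{\phi}{x}$, so $\applied{\phi}{T_t x - x} \le 0$; combined with $T_t x - x \ge 0$ and strict positivity of $\phi$ this forces $T_t x - x = 0$. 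For (c), given a super fixed point $x > 0$, consider the decreasing-in-a-suitable-sense family or rather the closed ideal generated by $\{T_t x - x : t \in S\}$; the cleaner route is to note that $\inf$-type arguments do not directly apply, so instead: the set $\{z \in E_+ : T_t z \ge z \text{ for all } t\}$ is a cone, and one shows the closed ideal $I$ generated by a nonzero super fixed point $x$ is $\cT$-invariant — indeed $T_t x \ge x$ implies $T_t$ maps the principal ideal $E_x$ into $E_{T_t x}$, and one must check invariance of $\overline{E_x}$; by irreducibility $I = E$, and then I would use that $x$ being a quasi-interior super fixed point together with boundedness forces equality via a Perron–Frobenius-type argument (e.g., the functional obtained from a fixed point of the adjoint, or directly: if $T_{t_0}x - x =: w > 0$ then $w$ is again a super fixed point and iterating $T_{t_0}^n x \ge x + nw$ contradicts boundedness since $w > 0$ and $x$ dominates no unbounded sequence — this last step needs care, see below). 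For (d), if $T_t'$ is a lattice homomorphism for each $t$ and the norm is order continuous, I would argue on the dual: the fixed space of the adjoint semigroup is a sublattice (image of lattice homomorphisms behaves well), and then use order continuity to transfer; alternatively, a lattice homomorphism adjoint means $T_t$ is "interval preserving," and for such operators a super fixed point argument closes as in the other cases.

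The main obstacle I anticipate is condition (c) (irreducibility): unlike (a), (b), (d), there is no a priori sublinear functional or norm estimate handed to us, so one must genuinely exploit the ideal structure. The cleanest fix is probably this: let $x$ be a super fixed point and set $w_t := T_t x - x \ge 0$; the closed ideal $\overline{\bigcup_t E_{w_t}}$ is $\cT$-invariant (using that $\cT$ is a semigroup of positive operators and $T_s w_t = T_s T_t x - T_s x \le T_{s+t}x - x = w_{s+t}$ modulo the super-fixed inequality applied to $T_s x \ge x$), hence by irreducibility it is $\{0\}$ or $E$; if it is $\{0\}$ we are done, and if it is $E$ one derives a contradiction with boundedness — since for any $n$ and any $t$ one gets $T_{t}^n$-type lower bounds forcing orbit of $x$ to be unbounded, using that some $w_t > 0$ together with irreducibility makes $x$ itself a quasi-interior point of a nontrivial band. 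I would also double-check the reduction "fixed space is a sublattice": once $|x|$ is shown to be a fixed point whenever $x$ is, closedness under $\vee$ and $\wedge$ follows from $x \vee y = \tfrac12(x + y + |x-y|)$ and linearity of the fixed space, so no extra work is needed beyond the super fixed point claim applied to $|x|$.
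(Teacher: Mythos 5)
Your reduction of the sublattice claim to the super-fixed-point claim, and your arguments for (a) and (b), are correct and coincide with the paper's (in (a) the clause ``contractive, so $\norm{x}\le\norm{T_{t_0}x}$'' is stated backwards, but the chain $\norm{x}<\norm{T_{t_0}x}\le\norm{x}$ you then write down is the right one). The genuine gaps are in (c) and (d). For (c), your main route is broken: from $T_{t_0}x = x + w$ with $w>0$ you cannot conclude $T_{t_0}^n x \ge x + nw$, since that would require $T_{t_0}^k w \ge w$ for all $k$, and positivity of $T_{t_0}$ only yields $T_{t_0}^k w \ge 0$ (the operator $T_{t_0}-\id$ is not positive, so the inequality $T_{t_0}x\ge x$ cannot be iterated in that way). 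What one actually gets is $T_{t_0}^n x = x + \sum_{k=0}^{n-1} T_{t_0}^k w$, an increasing orbit which is norm-bounded because $\cT$ is bounded, so no contradiction with boundedness arises; likewise, knowing that the closed ideal generated by the $w_t$ equals $E$ does not by itself produce one. The route you mention only in passing, ``the functional obtained from a fixed point of the adjoint,'' is the one that works and is what the paper does: fix $\alpha\in E'_+$ with $\applied{\alpha}{x}>0$, let $K$ be the weak${}^*$-closure of the convex hull of $\{T_t'\alpha: t\in S\}$ (weak${}^*$-compact by boundedness), and apply Day's fixed point theorem to get a $\cT'$-fixed $\phi\in K$; the super-fixed-point property gives $\applied{T_t'\alpha}{x}=\applied{\alpha}{T_tx}\ge\applied{\alpha}{x}$, hence $\applied{\phi}{x}>0$ and $\phi\ne 0$, and irreducibility upgrades $\phi$ to a strictly positive invariant functional, reducing (c) to (b). This fixed-point-theorem step is the essential content of (c) and is missing from your proposal.

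For (d) you correctly name the key fact---that $T_t'$ being a lattice homomorphism together with order continuity makes $T_t$ interval preserving---but ``a super fixed point argument closes as in the other cases'' is not an argument: none of the mechanisms from (a)--(c) is available here, and your alternative suggestion of arguing on the dual and ``transferring'' is too vague to assess. The paper's proof is: from $T_t[0,x]=[0,T_tx]\supseteq[0,x]$ pick $x_1\in[0,x]$ with $T_tx_1=x$, and recursively a decreasing sequence $(x_n)$ with $x_0=x$ and $T_tx_{n+1}=x_n$; order continuity of the norm makes $(x_n)$ converge to some $z$ with $T_tz=z$; finally $\norm{z-x}=\norm{T_t^nz-T_t^nx_n}\le \sup_{s\in S}\norm{T_s}\cdot\norm{z-x_n}\to 0$ forces $x=z$. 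As it stands, (c) and (d) are not proved.
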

\begin{proof}
	First note that if $x\in E$ is a fixed point of $\cT$, then $\abs{x} = \abs{T_tx} \leq T_t\abs{x}$ for every $t\in S$,
	i.e.\ $\abs{x}$ is a super fixed point of $\cT$. This shows that
	the fixed space of $\cT$ is a sublattice of $E$ if every super fixed point of $\cT$ is a fixed point.

	(a) Fix $t\in S$ and let $x\in E_+$ such that $T_t x\geq x$. Since $T_t$ is contractive we have 
	$\norm{T_tx}=\norm{x}$ and as the norm is strictly monotone, this readily implies that $T_tx=x$.

	(b) Fix $t\in S$ and let $x\in E_+$ such that $T_t x\geq x$; then $\applied{T_tx-x}{\phi}\geq 0$. On the other hand,
	$\applied{T_tx-x}{\phi} = \applied{x}{T_t'\phi - \phi}\leq 0$ and thus $\phi$ vanishes on the positive vector $T_tx-x$. 
	Since $\phi$ is strictly positive, $T_tx-x=0$.
	
	(c) Let $x\in E_+$ be a non-zero super-fixed point of $\cT$ and fix a functional $\alpha \in E'_+$ such that $\applied{\alpha}{x}>0$. Let $K \subseteq E'$ denote the weak${}^*$-closure of the convex hull of the orbit $\{T_t'\alpha: \, t \in S\}$. It follows from Day's fixed point theorem \cite[Theorem~3]{day1961} that the dual semigroup $\cT' \coloneqq (T_t')_{t \in S}$ has a fixed point $\phi \in K$. In particular, $\phi$ is positive, and non-zero since $\langle \phi, x\rangle \ge \langle \alpha,x\rangle > 0$. As $\cT$ is irreducible, $\phi$ is even strictly positive. Hence, assumption~(b) is fulfilled and this proves the assertion.
	
	(d) Fix $t\in S$ and let $x\in E_+$ such that $T_tx\geq x$. Since $T_t'$ is a lattice homomorphism and the norm on $E$
	is order continuous, it follows from \cite[Exer 1.4.E2]{meyer1991} that $T_t$ is interval preserving.
	Thus, we have $T_t[0,x]=[0,T_tx] \supseteq [0,x]$ and hence we find $x_1\in[0,x]$ such that $T_tx_1=x$. Now we construct recursively
	a decreasing sequence $(x_n)_{n \in \N_0} \subseteq E_+$ such that $T_tx_{n+1} = x_n$ and $x_0=x$.
	Since $E$ has order continuous norm, the sequence $(x_n)$ converges and $z\coloneqq \lim x_n$ fulfils
	$T_tz = z$.  Now it follows from 
	\[ \norm{ z-x } = \norm{T_t^n z - T_t^n x_n} \leq \sup_{s\in S} \norm{T_s} \cdot \norm{z-x_n} \to 0 \text{ as } n\to\infty \]
	that $x=z$ is a fixed point of $T_t$. 
\end{proof}

\begin{remark} \label{rem:sufficient-for-domination}
Regarding assumption (b) of Theorem~\ref{thm:convergence-partial-AM-compact}, it is quite obvious 
that in order to enforce convergence it cannot be sufficient that some operator $T_t$ simply dominates a non-trivial AM-compact operator.
Instead, one has to ensure that the dominated AM-compact operators interact, in a sense, with the entire semigroup. 
Assumption~(b) in Theorem~\ref{thm:one-parameter-dominate-kernel} shows that this ``interaction condition'' 
is as weak as one could possibly hope for: it suffices that the family of dominated AM-compact operators 
sees every positive fixed vector of the semigroup. 
\end{remark}

\section{Convergence of One-Parameter Semigroups}
\label{sec:consequences-i-convergence-results}

In this section we present some special cases of Theorems~\ref{thm:convergence-AM-compact} and~\ref{thm:convergence-partial-AM-compact}
which are most important for applications or interesting in their own right. Many of them are generalisations of known theorems
in so far as any continuity requirement is dropped. 

\subsection*{Semigroups of kernel operators}

In the following we consider semigroups which contain a so-called \emph{kernel operator}. For details about such operators we refer to Appendix~\ref{appendix:AM-compact}. Here we only recall that every kernel operator is AM-compact (Proposition~\ref{prop:kernel-AM-compact}), so we can apply our results from Section~\ref{sec:convergence}. Let us start with the following generalisation of Greiner's theorem quoted in the introduction.

\begin{theorem}
	\label{thm:convergence-kernel-operators}
	Let $E$ be a Banach lattice with order continuous norm and let $\cT = (T_t)_{t \in (0,\infty)}$ be a bounded and
	positive one-parameter semigroup on $E$. Assume that for some $s >0$ the operator $T_s$ is a kernel operator and 
	that $\cT$ possesses a quasi-interior fixed point.
	
	Then $\cT$ is strongly convergent.
\end{theorem}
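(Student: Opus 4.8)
The plan is to deduce Theorem~\ref{thm:convergence-kernel-operators} directly from Theorem~\ref{thm:convergence-AM-compact}. The additive semigroup $S = \bigl((0,\infty),+\bigr)$ is commutative and cancellative, so it is embeddable into a group, and the group it generates is $\langle S\rangle = (\R,+)$, which is divisible. Thus the only remaining hypothesis of Theorem~\ref{thm:convergence-AM-compact} that requires checking is that $T_s$ is AM-compact for some $s \in S$; by assumption $T_s$ is a kernel operator for some $s>0$, and by Proposition~\ref{prop:kernel-AM-compact} every kernel operator is AM-compact. The boundedness and positivity of $\cT$, as well as the existence of a quasi-interior fixed point, are assumed outright. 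Hence all hypotheses of Theorem~\ref{thm:convergence-AM-compact} are met, and it yields that $\cT$ is strongly convergent.

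I would write this out essentially as a one-paragraph verification: identify $S$, observe $\langle S\rangle = \R$ is divisible, invoke Proposition~\ref{prop:kernel-AM-compact} to upgrade ``kernel operator'' to ``AM-compact operator'', and then apply Theorem~\ref{thm:convergence-AM-compact} verbatim. One small point worth a sentence: the order-continuity hypothesis on $E$ is not actually needed for Theorem~\ref{thm:convergence-AM-compact}, but it is retained here because the notion of kernel operator (as set up in Appendix~\ref{appendix:AM-compact}) is typically formulated on such spaces; so no extra work is required on that front.

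There is essentially no obstacle here — the theorem is a corollary, and the only ``content'' is matching the hypotheses. If anything, the point to be careful about is purely bookkeeping: confirming that the semigroup $(0,\infty)$ under addition indeed generates the divisible group $\R$ (as opposed to some proper non-divisible subgroup), which is immediate since $\R$ is its own divisible hull and $(0,\infty)$ already contains arbitrarily small positive reals so $\langle(0,\infty)\rangle = \R$. With that noted, the proof is a direct application of Theorem~\ref{thm:convergence-AM-compact}.
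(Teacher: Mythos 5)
Your proposal is correct and is exactly the paper's proof: the paper derives this theorem in one line from Theorem~\ref{thm:convergence-AM-compact} together with Proposition~\ref{prop:kernel-AM-compact}. One small correction to your aside: the order continuity of the norm on $E$ is not merely a convention of the kernel-operator setup but is precisely the hypothesis under which Proposition~\ref{prop:kernel-AM-compact} guarantees that a kernel operator is AM-compact, so it is genuinely used in the deduction.
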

\begin{proof}
	This follows from Theorem~\ref{thm:convergence-AM-compact} and Proposition~\ref{prop:kernel-AM-compact}.
\end{proof}

As explained in the introduction, Greiner proved this result for the special case of contractive $C_0$-semigroups on $L^p$-spaces 
in \cite[Kor 3.11]{greiner1982} (see also \cite[Thm 12]{davies2005} for a related spectral result)
and deduced it from a certain $0$-$2$-law (\cite[Thm 3.7]{greiner1982}; see also \cite[Thm 5.1]{gerlach2013b}),
which has itself a very technical proof. 
By contrast, our proof of Theorem~\ref{thm:convergence-AM-compact} only relies on the fact that 
every kernel operator is AM-compact, 
on the Jacobs--de Leeuw--Glicksberg decomposition and on our result on group representations, Theorem~\ref{thm:group-on-atomic-bl}.

On atomic Banach lattices with order continuous norm every positive operator is a kernel operator 
(see e.g.\ \cite[Lem~4.1.5]{gerlach2014b}). Hence, we obtain the following special case of Theorem~\ref{thm:convergence-kernel-operators}.

\begin{theorem}
	\label{thm:convergence-atomic-Banach-lattice}
	Let $E$ be an atomic Banach lattice with order continuous norm and let $\cT = (T_t)_{t \in (0,\infty)}$ be a bounded and
	positive one-parameter semigroup on $E$. 
	If $\cT$ possesses a quasi-interior fixed point, then $\cT$ is strongly convergent.
\end{theorem}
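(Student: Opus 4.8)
The plan is to deduce this as an immediate special case of Theorem~\ref{thm:convergence-AM-compact} (equivalently, of Theorem~\ref{thm:convergence-kernel-operators}). First I would observe that the index semigroup $S = \bigl((0,\infty),+\bigr)$ is commutative and cancellative and that the group it generates is $\langle S\rangle = (\R,+)$, which is divisible. The representation $\cT$ is positive and bounded and possesses a quasi-interior fixed point by hypothesis, so the only remaining point to check is that some operator $T_s$ is AM-compact. But by Remark~\ref{rem:atomic-BL-operator-AM-compact} every order interval in an atomic Banach lattice with order continuous norm is compact; since a bounded operator maps relatively compact sets to relatively compact sets, it follows that every bounded operator on $E$ -- in particular each $T_s$ -- is AM-compact. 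Thus all hypotheses of Theorem~\ref{thm:convergence-AM-compact} are met, and strong convergence of $\cT$ follows at once.

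Alternatively one can route through Theorem~\ref{thm:convergence-kernel-operators}: on an atomic Banach lattice with order continuous norm every positive operator is a kernel operator (see e.g.\ \cite[Lem~4.1.5]{gerlach2014b}), so in particular $T_1$ is a kernel operator and the hypotheses of that theorem are satisfied verbatim. In either approach there is no genuine obstacle to overcome; all the substance -- the Jacobs--de Leeuw--Glicksberg splitting, the triviality result for positive group representations on atomic Banach lattices (Theorem~\ref{thm:group-on-atomic-bl}), and the interplay with divisibility -- has already been absorbed into the proof of Theorem~\ref{thm:convergence-AM-compact}. The only subtlety worth flagging is the divisibility of $\langle S\rangle$ required there, which is precisely what makes the argument go through for the continuous parameter interval $(0,\infty)$ but would fail, as Example~\ref{ex:dyadic} shows, over the dyadic numbers.
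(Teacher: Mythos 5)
Your proposal is correct and matches the paper's own argument exactly: the paper derives this result as a special case of Theorem~\ref{thm:convergence-kernel-operators} using the fact that every positive operator on such a lattice is a kernel operator, and explicitly notes the alternative direct route via Remark~\ref{rem:atomic-BL-operator-AM-compact} and Theorem~\ref{thm:convergence-AM-compact}, both of which you give.
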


Instead of using \cite[Lem~4.1.5]{gerlach2014b}, one can also derive this result directly from Theorem~\ref{thm:convergence-AM-compact} since every operator on $E$ is AM-compact according to Remark~\ref{rem:atomic-BL-operator-AM-compact}.

For $C_0$-semigroups Theorem~\ref{thm:convergence-atomic-Banach-lattice} was proved by Keicher \cite[Cor 3.8]{keicher2006}; 
see also \cite{davies2005} and \cite{wolff2008} for related results about positive $C_0$-semigroups on atomic Banach lattices.

\subsection*{Semigroups that dominate kernel operators}

Now we consider semigroups which do not necessarily contain a kernel operator but which at least dominate a kernel operator. 

\begin{theorem} \label{thm:one-parameter-dominate-kernel}
	Let $E$ be a Banach lattice with order continuous norm and let $\cT = (T_t)_{t \in (0,\infty)}$ be a bounded and
	positive one-parameter semigroup on $E$. Assume that $\cT$ possesses a quasi-interior fixed point
	and that the following two assumptions are fulfilled:
	\begin{enumerate}[\upshape (a)]
		\item Every super fixed point of $\cT$ is a fixed point.
		\item For every fixed point $x > 0$ of $\cT$ there exists $s > 0$ and a kernel operator $0 \le K \le T_s$ which fulfils $Kx > 0$.
	\end{enumerate}
	Then $\cT$ is strongly convergent.
\end{theorem}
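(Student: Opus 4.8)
The plan is to obtain this theorem as the one-parameter specialisation of Theorem~\ref{thm:convergence-partial-AM-compact}. First I would check the algebraic hypotheses: the additive semigroup $S = \bigl((0,\infty),+\bigr)$ is commutative and cancellative, hence embeddable into a group, and the group it generates is $\langle S\rangle = (\R,+)$, which is divisible (as recalled in the preliminaries). The space $E$ has order continuous norm by assumption, and $\cT = (T_t)_{t \in (0,\infty)}$ is a positive, bounded representation of $S$ possessing a quasi-interior fixed point $y \in E_+$. So all the standing hypotheses of Theorem~\ref{thm:convergence-partial-AM-compact} are in force.

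Next I would reconcile the two technical conditions. Condition~(a) of the present theorem is verbatim condition~(a) of Theorem~\ref{thm:convergence-partial-AM-compact}. For condition~(b), the only point to address is that a kernel operator is AM-compact; this is exactly Proposition~\ref{prop:kernel-AM-compact}. Hence, for every fixed point $x > 0$ of $\cT$, the kernel operator $0 \le K \le T_s$ with $Kx > 0$ supplied by hypothesis~(b) is in particular an AM-compact operator $K \ge 0$ satisfying $T_s \ge K$ and $Kx > 0$, which is precisely what condition~(b) of Theorem~\ref{thm:convergence-partial-AM-compact} demands.

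With all hypotheses verified, Theorem~\ref{thm:convergence-partial-AM-compact} then yields that $\cT$ is strongly convergent, which is the assertion. I do not anticipate any genuine obstacle here: the argument is a direct translation, and the only (routine) points requiring care are the identification $\langle (0,\infty)\rangle = \R$ together with its divisibility, and the appeal to Proposition~\ref{prop:kernel-AM-compact} for the AM-compactness of kernel operators. The value of stating the result separately is purely expository — it recasts the abstract Theorem~\ref{thm:convergence-partial-AM-compact} in the classical language of one-parameter semigroups of kernel operators, making it directly comparable to the results of Pich\'or--Rudnicki and Gerlach cited in the introduction.
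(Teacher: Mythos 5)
Your proposal is correct and coincides with the paper's own proof, which deduces the result from Theorem~\ref{thm:convergence-partial-AM-compact} together with Proposition~\ref{prop:kernel-AM-compact} (kernel operators are AM-compact); your additional verification that $(0,\infty)$ generates the divisible group $(\R,+)$ is exactly the routine check the paper leaves implicit.
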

\begin{proof}
	This follows from Theorem~\ref{thm:convergence-partial-AM-compact} and Proposition~\ref{prop:kernel-AM-compact}.
\end{proof}

Assumption~(b) is in particular fulfilled if, for at least one time $s \in (0,\infty)$, there is a kernel operator $0 \le K \le T_s$ which maps non-zero positive vector again to non-zero vectors. On $L^1$-spaces such a condition also occurs in two recent papers of Pich\'{o}r and Rudnicki \cite{pichor2016, pichor2017}; see the conditions~(K) in \cite[pp.~308 and~309]{pichor2016} and in the introduction of~\cite{pichor2017}.

For irreducible semigroups Theorem~\ref{thm:one-parameter-dominate-kernel} takes a simpler form.

\begin{corollary}
	\label{cor:one-parameter-dominate-kernel:irreducible}
	Let $E$ be a Banach lattice with order continuous norm and let $\cT = (T_t)_{t \in (0,\infty)}$ be a bounded and
	positive one-parameter semigroup on $E$ with a non-zero fixed point.
	
	If $\cT$ is irreducible and if $T_s \ge K \ge 0$ for some $s > 0$ and for some non-zero kernel operator $K$, then $\cT$ is strongly convergent.
\end{corollary}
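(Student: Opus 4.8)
The plan is to deduce the corollary from Theorem~\ref{thm:one-parameter-dominate-kernel} by verifying its hypotheses, with irreducibility doing all the work. The space $E$ already has order continuous norm and $\cT$ is a bounded positive one-parameter semigroup, so it remains to produce a quasi-interior fixed point and to check conditions (a) and (b) of that theorem.

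First I would apply Proposition~\ref{prop:conditions-in-convergence-theorem}(c): since $\cT$ is irreducible, every super fixed point of $\cT$ is a fixed point (this is exactly hypothesis~(a) of Theorem~\ref{thm:one-parameter-dominate-kernel}), and the fixed space of $\cT$ is a sublattice of $E$. By assumption $\cT$ has a non-zero fixed vector $x$; then $\abs{x}$ is a non-zero fixed vector as well, so $\cT$ possesses a non-zero \emph{positive} fixed vector $h$. Next I would show that $h$ is a quasi-interior point of $E_+$: the closure $\topClo{E_h}$ of the principal ideal generated by $h$ is a non-zero closed ideal, and it is $\cT$-invariant because $\abs{g} \le ch$ implies $\abs{T_t g} \le T_t\abs{g} \le cT_t h = ch$, so $T_t E_h \subseteq E_h$ and hence $T_t\topClo{E_h} \subseteq \topClo{E_h}$ by continuity. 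Irreducibility then forces $\topClo{E_h} = E$, i.e.\ $h$ is quasi-interior. The very same argument shows that \emph{every} positive fixed vector of $\cT$ is a quasi-interior point.

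To verify hypothesis~(b) of Theorem~\ref{thm:one-parameter-dominate-kernel}, let $x > 0$ be a fixed vector of $\cT$; by the previous step $E_x$ is dense in $E$, so the non-zero operator $K$ cannot vanish on $E_x$. Hence there is some $0 \le f \in E_x$ with $Kf > 0$ (take a vector in $E_x$ on which $K$ does not vanish and split it into positive and negative parts, both of which lie in the ideal $E_x$ and are positive, and at least one of which is not annihilated by $K$). Writing $f \le cx$ for some $c > 0$ and using $K \ge 0$ gives $0 < Kf \le cKx$, so $Kx > 0$. Thus the single operator $K$ with $0 \le K \le T_s$ from the hypothesis already witnesses (b) for every positive fixed vector $x$. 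With (a), (b) and a quasi-interior fixed point in place, Theorem~\ref{thm:one-parameter-dominate-kernel} yields the strong convergence of $\cT$.

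I do not expect a genuine obstacle: the corollary is essentially a convenient repackaging of Theorem~\ref{thm:one-parameter-dominate-kernel}, and the only mildly technical points are the two standard facts isolated above --- that a non-zero positive fixed vector of an irreducible positive semigroup is a quasi-interior point, and that a non-zero positive operator does not vanish on quasi-interior points.
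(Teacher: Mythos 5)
Your argument is correct and follows essentially the same route as the paper: invoke Proposition~\ref{prop:conditions-in-convergence-theorem}(c) for assumption~(a) and to obtain a positive non-zero fixed vector, use irreducibility to see that every positive fixed vector is quasi-interior, and then conclude that the dominated non-zero kernel operator $K$ witnesses assumption~(b) of Theorem~\ref{thm:one-parameter-dominate-kernel}. The paper's proof is just a terser version of yours, leaving the verification that $Kx>0$ for quasi-interior $x$ implicit where you spell it out.
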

\begin{proof}
	By Proposition~\ref{prop:conditions-in-convergence-theorem}, 
	the irreducibility of $\cT$ implies that the the fixed space of $\cT$ is a sublattice of $E$. 
	Hence, $\cT$ also possesses a positive non-zero fixed vector $y$. 
	Employing again the irreducibility assumption we see that $y$ is even a quasi-interior point of $E_+$. 
	Thus, the assertion follows from Theorem~\ref{thm:one-parameter-dominate-kernel}, where
	assumption~(a) is fulfilled by Proposition~\ref{prop:conditions-in-convergence-theorem}(c) and assumption~(b) 
	is fulfilled since every non-zero positive fixed point of $\cT$ is a quasi-interior point.
\end{proof}

For $C_0$-semigroups the above result was proved by the first author in \cite[Thm 4.2]{gerlach2013b} by using Greiner's $0$-$2$-law. 
For strongly continuous Markov semigroups on $L^1$-spaces, Corollary~\ref{cor:one-parameter-dominate-kernel:irreducible} was proved 
earlier by Pich\'{o}r and Rudnicki \cite[Thm~1]{pichor2000} who used the theory of Harris operators on $L^1$-spaces; this theory is, for instance, presented in \cite[Ch~V]{foguel1969}.

\begin{remark}
	For perturbed $C_0$-semigroups, that assumption from Corollary~\ref{cor:one-parameter-dominate-kernel:irreducible} that an operator $T_s$ dominates a non-zero kernel operator can sometimes be checked by employing the Dyson--Philipps series expansion. For a concrete example we refer to \cite[Subsection~2.3]{pichor2000}.
\end{remark}

\section{Spectral Theoretic Consequences} \label{sec:consequences-ii-spectral-theoretic-results}

In this section we are concerned with one-parameter semigroups of kernel operators that do not
necessarily possess a quasi-interior fixed point. As we have seen in Remark~\ref{rem:existence-of-quasi-interior-fixed-point},
we cannot expect them to be strongly convergent in general. Instead, we show that they satisfy a certain
spectral condition which is necessary for strong convergence.  To make this precise, we introduce the following terminology. 

Let $\cT = (T_t)_{t \in (0,\infty)}$ be a one-parameter semigroup on a complex Banach space $E$. A function $(0,\infty) \ni t \mapsto \lambda_t \in \C$,
also denoted by $(\lambda_t)_{t\in (0,\infty)}$,
is called an \emph{eigenvalue} of $\cT$ if there exists a vector $x \in E \setminus \{0\}$, called a corresponding \emph{eigenvector},
such that $T_t x = \lambda_t x$ for all $t \in (0,\infty)$.  An eigenvalue $(\lambda_t)_{t \in (0,\infty)}$ of the semigroup $\cT$ is called \emph{unimodular} if $\abs{\lambda_t} = 1$ for all $t \in (0,\infty)$. 
Now, let $\cT = (T_t)_{t \in (0,\infty)}$ be a one-parameter semigroup on a real Banach lattice $E$ and denote by $E_\C$ the complexification of $E$.
Each operator $T_t$ admits a canonical $\C$-linear extension $E_\C \to E_\C$ which we denote again by $T_t$ for simplicity.
We say that $(\lambda_t)_{t \in (0,\infty)} \subseteq \C$ is an eigenvalue of $\cT$ if it is an eigenvalue of the 
one-parameter semigroup consisting of the complex extensions of the operators $T_t$.

\begin{remark} \label{rem:eigenvalues-for-c_0-semigroups}
	Let $\cT = (T_t)_{t \in (0,\infty)}$ be a $C_0$-semigroup with generator $A$ on a complex Banach space $E$ and let $\lambda = (\lambda_t)_{t \in (0,\infty)}$ 
	be a complex-valued function. 
	Then $\lambda$ is an eigenvalue for $\cT$ if and only if there exists an eigenvalue $\mu$ of $A$ such that $\lambda_t = e^{t\mu}$ for all $t \in (0,\infty)$.
\end{remark}
\begin{proof}
	$\Leftarrow$: If $0 \not= x \in \ker(\mu - A)$, then $(z - A)^{-1}x = (z-\mu)^{-1}x$ for all $z$ within the resolvent set of $A$. 
	By the Euler formula for $C_0$-semigroups \cite[Cor~III.5.5]{nagel2000} this implies that $T_tx = e^{\mu t}x$ for all $t \in (0,\infty)$.
	
	$\Rightarrow$: Let $x \in E \setminus \{0\}$ such that $T_tx = \lambda_t x$ for all $t \in (0,\infty)$. 
	With the definition $\lambda_0 \coloneqq 1$ it follows from the semigroup law that $\lambda_{t+s} = \lambda_t \lambda_s$ for all $t,s \in [0,\infty)$.
	Moreover, the mapping $[0,\infty) \ni t \mapsto \lambda_t \in \C$ is continuous since $\cT$ is a $C_0$-semigroup. 
	According to \cite[Thm~I.1.4]{nagel2000} this implies that there exists a number $\mu \in \C$ such that $\lambda_t = e^{\mu t}$ for all $t \in [0,\infty)$. 
	It now follows from the very definition of the generator $A$ that $\mu$ is an eigenvalue of $A$ with eigenvector $x$.
\end{proof}

We now prove criteria for the absence of non-trivial unimodular eigenvalues. 
This property is of interest as it is closely related to the asymptotic behaviour of the semigroup. Let us briefly illustrate this with the following proposition:

\begin{proposition} \label{prop:existence-of-periodic-orbits}
	Let $\cT = (T_t)_{t \in (0,\infty)}$ be a one-parameter semigroup in a real or complex Banach space $E$ which does not possess any unimodular eigenvalue except possibly $(1)_{t \in (0,\infty)}$.
	\begin{enumerate}[\upshape (a)]
		\item If $\cT$ has relatively compact orbits, then $\cT$ is strongly convergent.
		\item If the mapping $(0,\infty) \ni t \mapsto T_t \in \cL(E)$ is strongly continuous, then $\cT$ does not admit non-trivial periodic orbits.
	\end{enumerate}
\end{proposition}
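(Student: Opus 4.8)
\textbf{Proof proposal for Proposition~\ref{prop:existence-of-periodic-orbits}.}

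The plan is to prove each assertion by a spectral-type argument combined with the Jacobs--de Leeuw--Glicksberg splitting. For part~(a), assume $\cT$ has relatively compact orbits; then the orbits are in particular relatively weakly compact, so the JdLG theory applies. Working over the complexification if $E$ is real, I would let $P$ be the JdLG projection: $E = PE \oplus \ker P$, the restriction of the (weak operator closure of the) semigroup to $PE$ is a compact group $\cK$, and for $x \in \ker P$ the vector $0$ lies in the weak closure of the orbit $\{T_t x : t > 0\}$. On $\ker P$, relative norm-compactness of the orbit upgrades weak-cluster-point $0$ to norm convergence $\lim_{t} T_t x = 0$, exactly as in the proof of Theorem~\ref{thm:convergence-AM-compact}. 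So it remains to show $T_t|_{PE} = \id_{PE}$ for all $t$. The group $\cK$ acting on $PE$ is compact abelian (abelian because $(0,\infty)$ is); if $\cK$ were nontrivial it would have a nontrivial character, and via the spectral decomposition of the compact abelian group $\cK$ on $PE$ one extracts a common eigenvector $x \in PE \setminus\{0\}$ with $T_t x = \lambda_t x$ where $(\lambda_t)_{t>0}$ is unimodular (the eigenvalues of elements of a compact group lie on the unit circle) and, since $x$ is not a fixed vector, $(\lambda_t) \neq (1)_{t>0}$. This contradicts the hypothesis. Hence $\cK$ is trivial, $P$ is the projection onto the fixed space, and $\lim_t T_t x = Px$ for every $x$, which is the desired strong convergence.

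For part~(b), suppose $(0,\infty) \ni t \mapsto T_t$ is strongly continuous and, for contradiction, that $\cT$ admits a nontrivial periodic orbit: there is $x \neq 0$ and $\tau > 0$ with $T_\tau x = x$ but $t \mapsto T_t x$ not constant. The closed linear span $F$ of the orbit $\{T_t x : t > 0\}$ is a $\cT$-invariant Banach space on which $\cT$ restricts to a strongly continuous one-parameter semigroup; moreover $T_\tau|_F = \id_F$, so each $T_t|_F$ is invertible (with inverse $T_{\tau - t'}$ for suitable representative) and $\cT|_F$ is in fact a strongly continuous \emph{group}, indeed a periodic group, hence a bounded group with relatively compact orbits (it factors through the compact group $\R/\tau\Z$). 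Now the rotation-group structure: decomposing $F$ under this periodic group via Fourier analysis on $\R/\tau\Z$, the spaces $F_n \coloneqq \{ v \in F : T_t v = e^{2\pi i n t/\tau} v \}$ span a dense subspace of $F$; since $t \mapsto T_t x$ is nonconstant, $x$ has a nonzero component in some $F_n$ with $n \neq 0$, giving an eigenvector for the unimodular eigenvalue $(e^{2\pi i n t/\tau})_{t>0} \neq (1)_{t>0}$ — again contradicting the hypothesis. Therefore no nontrivial periodic orbit exists.

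The main obstacle I anticipate is the clean extraction of a unimodular eigenvector from the JdLG group $\cK$ on $PE$ in part~(a), since $PE$ need not be finite-dimensional and $\cK$ is only a compact (topological) group in the weak operator topology: one must invoke the structure of compact abelian group representations on Banach spaces — e.g.\ that a nontrivial such representation has a nontrivial one-dimensional (hence eigenvalue) subrepresentation, obtainable by averaging against a nontrivial character of $\cK$ with respect to normalized Haar measure, using that the action is strongly continuous on $PE$ (which follows from weak continuity plus the group being bounded and the orbits compact). An alternative route, avoiding abstract harmonic analysis, is to reduce directly to Theorem~\ref{thm:convergence-AM-compact} or Theorem~\ref{thm:group-on-atomic-bl} in the lattice setting when $E$ is a Banach lattice; but since the proposition is stated for general Banach spaces, the harmonic-analytic argument on the compact abelian group $\cK$ seems the natural one, and I would make sure to cite the relevant fact from \cite{eisner2015} on the JdLG decomposition together with a standard Peter--Weyl-type statement for compact abelian groups.
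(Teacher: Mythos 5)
Your argument is correct and follows essentially the same route as the paper: for part~(a) the paper simply cites the splitting theorem from Krengel's book, whose standard proof is exactly your JdLG decomposition plus the fact that the reversible part $PE$ is spanned by common eigenvectors of the compact abelian group $\cK$ (so a nontrivial $\cK$ forces a unimodular eigenvalue $\neq (1)_{t>0}$), and for part~(b) the paper restricts to the closed span $F$ of the orbit, observes $T_{t_0}|_F = \id_F$ so that $\cT|_F$ extends to a periodic $C_0$-semigroup, and cites Engel--Nagel's spectral theorem for periodic semigroups in place of your explicit Fourier decomposition. The only nitpick is that you phrase the periodic-orbit hypothesis as $T_\tau x = x$ rather than the paper's $T_tx = T_{t_0+t}x$ for all $t>0$, but since your argument only uses $T_\tau|_F = \id_F$ on the closed span of the orbit, which follows from either formulation, nothing is lost.
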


Here, way say that $\cT$ admits a \emph{non-trivial periodic orbit} if there exists a vector $x \in E$ and a time $t_0 > 0$ such that $T_t x = T_{t_0 + t}x$ 
for all $t \in (0,\infty)$ while the mapping $(0,\infty) \ni t \mapsto T_tx \in E$ is not constant; in this case, $t_0$ is called a \emph{period} of this orbit.

\begin{proof}[Proof of Proposition~\ref{prop:existence-of-periodic-orbits}]
	By passing to a complexification, we may assume that the scalar field is $\C$.
	
	(a) This is a consequence of \cite[Thm~4.5 in \S2]{krengel1985}.
	
	(b) Suppose that the orbit $(0,\infty) \ni t \mapsto T_t x \in E$ is not constant but periodic with $t_0 > 0$ as a period. 
	Denote by $F \subseteq E$ the closed linear span of $\{T_tx: t \in (0,\infty)\}$. Then $F$ is non-zero and invariant with respect to $\cT$. 
	The restriction $\cS \coloneqq (T_t|_F)_{t \in (0,\infty)}$ of $\cT$ to $F$ is periodic with $t_0$ as a period, but it is not constant 
	since $t \mapsto T_t|_F T_{t_0}x$ is not constant.
	Moreover, $T_{t_0}|_F$ is the identity operator on $F$, so $\cS$ extends to a periodic $C_0$-semigroup on $F$. 
	We conclude from \cite[Thm~IV.2.26]{nagel2000} that the generator of $\cS$ possesses an eigenvalue $i\beta \in i\R \setminus \{0\}$, 
	so it follows from Remark~\ref{rem:eigenvalues-for-c_0-semigroups} that $(e^{it\beta})_{t \in (0,\infty)}$ is an eigenvalue of $\cS$ and hence of $\cT$.
\end{proof}

We point out that assertion~(b) of the above proposition fails if one drops the strong continuity assumption. 
A counterexample is again provided by the shift semigroup on an $\ell^p$-space over the index set $\R$.

Now we give sufficient criteria for a positive semigroup to have no non-trivial unimodular eigenvalue. 
Recall that a Banach lattice $E$ is called a \emph{KB-space} if every norm-bounded increasing net (or equivalently: sequence) in $E$ converges. 
Important examples of KB-spaces are reflexive Banach lattices and AL-spaces.

\begin{theorem} \label{thm:eigenvalues-AM-compact}
	Let $\cT = (T_t)_{t\in (0,\infty)}$ be a bounded and positive one-parameter semigroup on a Banach lattice $E$
	such that $T_s$ is AM-compact for some $s\in S$.
	
	If every super fixed point of $\cT$ is a fixed point of $\cT$ 
	or if $E$ is a KB-space, then $(1)_{t\in (0,\infty)}$ is the only possible unimodular eigenvalue of $\cT$.
\end{theorem}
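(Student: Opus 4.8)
The plan is to fix a unimodular eigenvalue $(\lambda_t)_{t\in(0,\infty)}$ of $\cT$ with some eigenvector $0\neq x\in E_\C$ and to deduce that $\lambda_t=1$ for all $t$. The starting point is the observation that $\abs{x}$ is a non-zero super fixed point of $\cT$: positivity of $T_t$ gives $T_t\abs{x}\ge\abs{T_tx}=\abs{\lambda_t x}=\abs{x}$ since $\abs{\lambda_t}=1$, while $\abs{x}>0$ because $x\neq 0$.

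Next I would produce a genuine fixed vector $h\in E_+$ of $\cT$ with $\abs{x}\le h$, treating the two alternative hypotheses separately. If every super fixed point of $\cT$ is a fixed point, simply take $h\coloneqq\abs{x}$. If instead $E$ is a KB-space, observe that the net $(T_t\abs{x})_{t\in(0,\infty)}$ is increasing (for $r>0$ one has $T_{t+r}\abs{x}=T_t(T_r\abs{x})\ge T_t\abs{x}$ because $T_r\abs{x}\ge\abs{x}$) and norm-bounded since $\cT$ is bounded; hence it converges in $E$ to some $h\ge\abs{x}$, and passing to the limit along the cofinal net $t\to\infty$ in $T_s(T_t\abs{x})=T_{t+s}\abs{x}$ shows $T_sh=h$ for all $s>0$.

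The main step is then to pass to the closed ideal $F\coloneqq\overline{E_h}$. Because $h$ is a fixed point, $F$ is invariant under $\cT$; it is a Banach lattice in which $h$ is a quasi-interior point, and $\cT|_F\coloneqq(T_t|_F)_{t\in(0,\infty)}$ is a positive, bounded representation with $h$ as a quasi-interior fixed point. Moreover $T_s|_F$ is AM-compact: every order interval of $F$ coincides, by the ideal property, with the corresponding order interval of $E$, the operator $T_s$ maps it to a relatively compact subset of $E$, and this set lies in the closed subspace $F$ and is therefore relatively compact in $F$. Since $(0,\infty)$ generates the divisible group $(\R,+)$, Theorem~\ref{thm:convergence-AM-compact} applies to $\cT|_F$ and yields that it is strongly convergent.

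To finish, note that $\abs{\re x}\le\abs{x}\le h$ and $\abs{\im x}\le\abs{x}\le h$, so $\re x,\im x\in E_h\subseteq F$; hence $T_t(\re x)$ and $T_t(\im x)$ converge, and therefore $T_tx=\lambda_t x$ converges in $E_\C$ to some $y$ with $\norm{y}=\lim_t\norm{\lambda_t x}=\norm{x}\neq 0$. Choosing $\varphi\in(E_\C)'$ with $\applied{\varphi}{x}\neq 0$ gives $\lambda_t\to\mu\coloneqq\applied{\varphi}{y}/\applied{\varphi}{x}$ with $\abs{\mu}=1$, and letting $t\to\infty$ in $\lambda_{s+t}=\lambda_s\lambda_t$ forces $\lambda_s=1$ for every $s>0$. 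I expect the only genuinely delicate points to be the idea of dominating $\abs{x}$ by a \emph{fixed} vector (rather than merely by the super fixed point $\abs{x}$ itself, whose principal ideal need not be $\cT$-invariant) and the verification that AM-compactness is inherited by the restriction to $F$; the facts about the modulus on $E_\C$ used along the way are standard.
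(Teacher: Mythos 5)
Your argument is correct and follows essentially the same route as the paper: pass from the super fixed point $\abs{x}$ to a genuine fixed point $h$ (directly or via the KB-space limit), restrict to the closed ideal $\overline{E_h}$ where $h$ is a quasi-interior fixed point and $T_s$ remains AM-compact, apply Theorem~\ref{thm:convergence-AM-compact}, and deduce $\lambda_t=1$ from convergence of $(T_tx)$. The only (immaterial) difference is the final step, where the paper looks at the convergent sequence $(\lambda_t^n x)_{n\in\N}$ for fixed $t$ instead of your net limit of $(\lambda_t)$ combined with the cocycle identity.
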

\begin{proof}
	Assume that $(\lambda_t)_{t \in (0,\infty)}$ is a unimodular eigenvalue for $\cT$. 
	Denote by $z \in E_\C \setminus \{0\}$ a corresponding eigenvector, where $E_\C$ is the Banach lattice complexification of $E$. 
	We have to show that $\lambda_t = 1$ for all $t \in (0,\infty)$.
	
	We first prove that there exists a fixed point $f \in E_+$ for $\cT$ which fulfils $f \ge \abs{z}$. 
	Since $\abs{z} = \abs{\lambda_t z} = \abs{T_t z} \le T_t \abs{z}$ for each $t \in (0,\infty)$, $\abs{z}$ is a super-fixed point of $\cT$. 
	Hence, under the  assumption that every super fixed point of $\cT$ is a fixed point we have found our fixed point $f \coloneqq \abs{z}$. 
	If instead $E$ is a KB-space, the increasing and norm bounded net $(T_t\abs{z})_{t \in (0,\infty)} \subseteq E$ converges to a vector $f \in E_+$,
	which is clearly a fixed point of $\cT$ and dominates $\abs{z}$.
	
	Let $F$ denote the closure of the principal ideal $E_{f}$ in $E$ and let $F_\C \coloneqq F + iF \subseteq E_\C$. 
	Since $E_{f}$ is $\cT$-invariant, so are $F$ and $F_\C$. The restriction of $\cT$ to $F$ has $f$ as a quasi-interior fixed point and the restriction of $T_s$ 
	to $F$ is AM-compact. Thus, $\cT$ fulfils all assumptions of Theorem~\ref{thm:convergence-AM-compact}, which implies that
	that $\lim_{t \to \infty} T_tg$ exists for all $g \in F$ and hence also for all $g \in F_\C$. 
	In particular, $(T_t z)$ converges as $t$ tends to infinity. Hence, if we fix $t \in (0,\infty)$, then the 
	sequence $(T_{nt}z) = (\lambda_t^n z)$ converges.
	This implies $\lambda_t = 1$.
\end{proof}

If the semigroup only dominates an AM-compact operator, 
we have the following result, which is a consequence of Theorem~\ref{thm:convergence-partial-AM-compact}. 
The proof is very similar to that of Theorem~\ref{thm:eigenvalues-AM-compact}.

\begin{theorem} \label{thm:eigenvalues-partial-AM-compact}
	Let $\cT = (T_t)_{t\in (0,\infty)}$ be a bounded and positive one-parameter semigroup 
 	on a Banach lattice $E$ with order continuous norm. Assume that $\cT$ has the following two properties:
	\begin{enumerate}[\upshape (a)]
		\item Every super fixed point of $\cT$ is a fixed point of $\cT$.
		\item For every fixed point $x > 0$ of $\cT$ there exists $s\in (0,\infty)$ and an AM-compact operator $K \geq 0$ such that $T_s \geq K$ and $Kx>0$.
	\end{enumerate}
	Then  $(1)_{t \in S}$ is the only possible unimodular eigenvalue of $\cT$.
\end{theorem}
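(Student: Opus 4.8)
The plan is to mimic the proof of Theorem~\ref{thm:eigenvalues-AM-compact}, replacing the appeal to Theorem~\ref{thm:convergence-AM-compact} by one to Theorem~\ref{thm:convergence-partial-AM-compact}; the main point to verify is that hypotheses~(a) and~(b) survive the passage to a suitable invariant subspace.

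First I would fix a unimodular eigenvalue $(\lambda_t)_{t\in(0,\infty)}$ with eigenvector $z \in E_\C \setminus \{0\}$, where $E_\C$ denotes the complexification, and reduce the claim to showing $\lambda_t = 1$ for each fixed $t$. Since $\abs{z} = \abs{\lambda_t z} = \abs{T_t z} \le T_t\abs{z}$ for all $t$, the vector $\abs{z}$ is a super fixed point of $\cT$, hence by assumption~(a) a fixed point; set $f \coloneqq \abs{z} \in E_+$. Let $F$ be the closed ideal $\overline{E_f}$ in $E$, which is $\cT$-invariant, carries an order continuous norm, and has $f$ as a quasi-interior fixed point of $\cT|_F$. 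Writing $z = z_1 + iz_2$ with $z_1,z_2 \in E$ we have $\abs{z_j} \le f$, so $z \in F_\C \coloneqq F + iF$, which is $\cT$-invariant as well.

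Next I would check that $\cT|_F$ satisfies the hypotheses of Theorem~\ref{thm:convergence-partial-AM-compact}. Hypothesis~(a) is inherited directly: a super fixed point of $\cT|_F$ is a super fixed point of $\cT$ lying in $F$, hence a fixed point. For hypothesis~(b), let $x > 0$ be a fixed point of $\cT|_F$; it is a fixed point of $\cT$, so by assumption~(b) there are $s \in (0,\infty)$ and an AM-compact operator $0 \le K \le T_s$ on $E$ with $Kx > 0$. The key observation is that $K$ leaves $F$ invariant: since $Kf \le T_s f = f$ we get $Kf \in E_f$, whence $\abs{Kg} \le K\abs{g} \le cKf \le cf$ for every $g \in E_f$ with $\abs{g} \le cf$, so $K(E_f) \subseteq E_f$ and by continuity $K(F) \subseteq F$. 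Thus $K|_F$ is an AM-compact operator on $F$ (order intervals in $F$ are contained in order intervals in $E$), it satisfies $0 \le K|_F \le T_s|_F$, and $(K|_F)x = Kx > 0$. Hence Theorem~\ref{thm:convergence-partial-AM-compact} applies to $\cT|_F$, so $\cT|_F$ is strongly convergent; in particular $\lim_{t\to\infty} T_t g$ exists for every $g \in F$ and thus for every $g \in F_\C$, so $(T_t z)_t$ converges.

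Finally, fixing $t \in (0,\infty)$, the sequence $(T_{nt}z)_{n\in\N} = (\lambda_t^n z)_{n\in\N}$ converges; since $z \neq 0$ the scalars $\lambda_t^n$ converge, and as $\abs{\lambda_t} = 1$ this forces $\lambda_t = 1$. I expect the only non-routine step to be the invariance $K(F) \subseteq F$ of the dominated AM-compact operator, which is precisely what makes hypothesis~(b) descend to the restricted semigroup; the remainder runs parallel to the proof of Theorem~\ref{thm:eigenvalues-AM-compact}.
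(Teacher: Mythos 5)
Your proposal is correct and follows the same route as the paper: pass to $F=\overline{E_{\abs{z}}}$, verify that $\cT|_F$ satisfies the hypotheses of Theorem~\ref{thm:convergence-partial-AM-compact}, and conclude from strong convergence that $\lambda_t^n$ converges, forcing $\lambda_t=1$. The only difference is that you spell out the invariance $K(F)\subseteq F$ and the descent of hypothesis~(b), which the paper dismisses as ``easy to verify''; your verification is correct.
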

\begin{proof}
	Assume that $(\lambda_t)_{t \in (0,\infty)}$ is a unimodular eigenvalue for $\cT$, 
	denote by $E_\C$ the Banach lattice complexification of $E$ and let $z \in E_\C \setminus \{0\}$ be an eigenvector corresponding to $(\lambda_t)_{t \in (0,\infty)}$. 
	As in the proof of Theorem~\ref{thm:eigenvalues-AM-compact}, $\abs{z}$ is a fixed point of $\cT$.
	Let $F$ denote the closure of the principal ideal $E_{\abs{z}}$ and set $F_\C \coloneqq F + i F$. Then $F$, and thus $F_\C$, is $\cT$-invariant
	and it is easy to verify that the restriction of $\cT$ to $F$ satisfies all assumptions of Theorem~\ref{thm:convergence-partial-AM-compact}.
	Thus, $\lim_{t \to \infty} T_tg$ exists for all $g \in F$ and hence also for all $g \in F_\C$. 
	In particular, $(T_tz)$ converges as $t$ tends to infinity.
	Thus, $\lambda_t = 1$ for all $t \in (0,\infty)$.
\end{proof}

\begin{remark}
	Theorems~\ref{thm:eigenvalues-AM-compact} and~\ref{thm:eigenvalues-partial-AM-compact} remain true if we replace $(T_t)_{t \in (0,\infty)}$ with a representation of an arbitrary commutative semigroup (where one has to use the obvious generalisation of the notion \emph{eigenvalue}). The proofs are the same except for the last step where one has to employ a slightly more involved argument since a commutative semigroup need not be Archimedean, in general. For instance, the following observation does the job:
	
	\textit{If $t \mapsto \lambda_t$ is a semigroup homomorphism from a commutative semigroup $S$ into the complex unit circle and if the net $(\lambda_t)_{t \in S}$ converges, then $\lambda_t = 1$ for all $t \in S$.}
	
	To see this, first note that $\lim_t \lambda_t = 1$. Let $\varepsilon > 0$ and choose $t_0 \in S$ such that $|\lambda_t - 1| < \varepsilon$ for all $t \ge t_0$. Then, $|\lambda_{t_0} - 1| < \varepsilon$ and $|\lambda_{t_0 + t} - 1| < \varepsilon$ for all $t \in S$. We thus conclude that 
	\begin{align*}
		|\lambda_t - 1| = |\lambda_{t_0+t} - \lambda_{t_0}| < 2\varepsilon
	\end{align*}
	for all $t \in S$. Since $\varepsilon > 0$ was arbitrary, all $\lambda_t$ are equal to $1$.
\end{remark}

\section{Semigroups on function spaces}
\label{sec:consequences-iii-semigroups-on-functions-spaces}

In this final section we consider $L^p$-spaces and spaces of continuous functions. We discuss consequences of our main results for positive one-parameter semigroups on those spaces, in particular under the assumptions that the space contains an atom (Theorem~\ref{thm:semigroup-on-measure-space-with-atom} and Corollary~\ref{cor:continuous-functions-isolated}).

\subsection*{Semigroups over measure spaces with atoms}

In the following we briefly discuss semigroups over $L^p$-spaces in case that the underlying measure space contains an atom. Here, a point $\omega_0$ in a $\sigma$-finite measure space $(\Omega,\mu)$ is called an \emph{atom} if $\{\omega_0\}$ is measurable and $\mu(\{\omega_0\}) > 0$. This simple assumption has astonishing consequences for the asymptotic behaviour of the semigroup.

\begin{theorem}
	\label{thm:semigroup-on-measure-space-with-atom}
	Let $(\Omega,\mu)$ be a $\sigma$-finite measure space with an atom $\omega_0 \in \Omega$. Let $p \in [1,\infty)$ and let $\cT = (T_t)_{t \in (0,\infty)}$ be 
	a bounded and positive semigroup on $E \coloneqq L^p(\Omega,\mu)$. If $\cT$ is irreducible and has a non-zero fixed-point, then $\cT$ is strongly convergent.
\end{theorem}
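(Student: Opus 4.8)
The plan is to deduce the statement from Theorem~\ref{thm:one-parameter-dominate-kernel}. Since $E = L^p(\Omega,\mu)$ has order continuous norm for $p \in [1,\infty)$ and $\cT$ is bounded and positive, it suffices to verify that $\cT$ possesses a quasi-interior fixed point and that conditions~(a) and~(b) of that theorem hold. The quasi-interior fixed point and condition~(a) are obtained exactly as in the proof of Corollary~\ref{cor:one-parameter-dominate-kernel:irreducible}: by Proposition~\ref{prop:conditions-in-convergence-theorem}(c) the fixed space of $\cT$ is a sublattice, so the modulus of the given non-zero fixed point is again a fixed point and $\cT$ has a non-zero fixed point $y \in E_+$; the closed ideal generated by $y$ is $\cT$-invariant (as $y$ is fixed) and hence equals $E$ by irreducibility, so $y$ is a quasi-interior point of $E_+$; and Proposition~\ref{prop:conditions-in-convergence-theorem}(c) also yields~(a).

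The crucial point is condition~(b), and this is where the atom enters. Since $\mu$ is $\sigma$-finite, the atom $\omega_0$ has finite measure; hence $a \coloneqq \mathds{1}_{\{\omega_0\}}$ is an atom of the Banach lattice $E$, and $\psi \coloneqq \mathds{1}_{\{\omega_0\}}$ defines a non-zero positive functional $\psi \in E'$ through $\langle f, \psi\rangle = \int_{\{\omega_0\}} f \dx\mu = f(\omega_0)\,\mu(\{\omega_0\})$. Let $P_0 \in \cL(E)$ be the band projection $f \mapsto \mathds{1}_{\{\omega_0\}}\,f = \tfrac{\langle f,\psi\rangle}{\mu(\{\omega_0\})}\, a$ onto the one-dimensional band spanned by $a$, so that $0 \le P_0 \le \id_E$. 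For any $s > 0$ the operator $K_s \coloneqq P_0 T_s P_0$ has one-dimensional range and is therefore a kernel operator (being finite-rank it is in fact compact, hence AM-compact, so one could alternatively invoke Theorem~\ref{thm:convergence-partial-AM-compact}); moreover $0 \le P_0 \le \id_E$ together with positivity of $T_s$ gives $0 \le K_s \le T_s$, and a direct computation yields $K_s f = \tfrac{\langle f,\psi\rangle\,\langle T_s a,\psi\rangle}{\mu(\{\omega_0\})^2}\, a$.

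It thus remains to produce, for a given fixed point $x > 0$ of $\cT$, a time $s > 0$ with $K_s x \neq 0$. As every non-zero positive fixed point of an irreducible semigroup is quasi-interior, we have $\langle x,\psi\rangle > 0$, so this reduces to finding $s > 0$ with $\langle T_s a, \psi\rangle > 0$. To this end I would consider
\[ N \coloneqq \bigl\{ f \in E : \langle T_t\abs{f},\psi\rangle = 0 \ \text{ for all } t > 0 \bigr\}. \]
Using $\abs{T_s f} \le T_s\abs{f}$ and the positivity of $T_t$ and of $\psi$, one checks that $N$ is a closed, $\cT$-invariant ideal. It is proper, since $y$ is strictly positive $\mu$-a.e.\ and therefore $\langle T_t y,\psi\rangle = \langle y,\psi\rangle = y(\omega_0)\,\mu(\{\omega_0\}) > 0$, so $y \notin N$; by irreducibility $N = \{0\}$, and as $a \neq 0$ this delivers some $s > 0$ with $\langle T_s a,\psi\rangle = \langle T_s\abs{a},\psi\rangle > 0$. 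Then $K \coloneqq K_s$ satisfies $0 \le K \le T_s$, is a kernel operator, and $Kx \neq 0$, so condition~(b) holds and Theorem~\ref{thm:one-parameter-dominate-kernel} yields strong convergence of $\cT$.

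I expect the verification of condition~(b) to be the main obstacle: one must manufacture, out of the single atom $\omega_0$, a non-trivial kernel operator below some $T_s$ that is not annihilated by the relevant positive fixed vector. The compression $P_0 T_s P_0$ does this automatically once one knows that $\langle T_s a,\psi\rangle \neq 0$ for at least one $s$, and establishing the latter is precisely where irreducibility is used, via the invariant ideal $N$. The remaining ingredients — order continuity of the $L^p$-norm, condition~(a), and the existence of a quasi-interior fixed point — are routine and run parallel to Corollary~\ref{cor:one-parameter-dominate-kernel:irreducible}.
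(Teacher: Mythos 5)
Your proof is correct and follows essentially the same route as the paper's: the atom $\omega_0$ generates a one-dimensional band in $E$, composing its band projection with some $T_s$ yields a non-zero positive rank-one (hence kernel) operator dominated by $T_s$, and the dominating-kernel convergence machinery of Section~\ref{sec:consequences-i-convergence-results} finishes the argument. The paper executes this more economically by taking the one-sided composition $PT_t$ and invoking Corollary~\ref{cor:one-parameter-dominate-kernel:irreducible} (for which it suffices that the closed ideal $B^\bot$ is non-zero and hence not invariant), whereas your two-sided compression $P_0T_sP_0$ forces the additional invariant-ideal argument with $N$ to secure $\applied{T_sa}{\psi}>0$ --- correct, but avoidable, since with $K \coloneqq P_0T_s$ one gets $Kx \neq 0$ for free from $T_sx=x$ and the quasi-interiority of $x$.
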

\begin{proof}
	The assertion is obvious if $\dim E=1$, so assume that $\dim E \ge 2$. 
	Let $e\colon \Omega \to \R$ be the indicator function of the singleton $\{\omega_0\}$. As $(\Omega,\mu)$ is $\sigma$-finite we have $\mu(\{\omega_0\}) < \infty$, so $e \in E$.
	Denote by $B\coloneqq \{e\}^{\bot\bot}$ the band generated by $e$ and by $P \in \cL(E)$ the band projection onto $B$.
	Since $B$ is one-dimensional and $\dim E \ge 2$, the orthogonal band $B^\bot$ is non-zero and can therefore not be invariant under the semigroup $\cT$. 
	Thus, there exists $t \in (0,\infty)$ such that $PT_t \not= 0$. On the other hand, $PT_t$ has rank $1$ and is thus a kernel operator. 
	Since $T_t$ dominates $PT_t$, the assertion follows from Corollary~\ref{cor:one-parameter-dominate-kernel:irreducible}.
\end{proof}

Note that in the important case of $C_0$-semigroups of Markov operators on $L^1$-spaces, the above theorem also follows from \cite[Thm~1]{pichor2000} although this was not explicitly stated in \cite{pichor2000}.

The assumption that the underlying measure space contains an atom is frequently fulfilled in models from queuing and reliability theory. 
The state spaces in such models are often isometrically lattice isomorphic to an $L^1$-space over a measure space containing an atom, 
and the time evolution of the system is described by means of a $C_0$-semigroup consisting of Markov operators; see e.g.\ \cite{gupur2011a} for an introduction. 
For several of these models one can prove convergence of the semigroup 
with respect to the operator norm as e.g.\ in \cite{zheng2011, wang2012, zheng2015}. 
On the other hand, there  also exist models for which one can only show strong stability; in \cite{haji2007, gupur2011, haji2013} such results are proved by employing 
spectral analysis of the generator and a version of the ABLV-theorem. 
Theorem~\ref{thm:semigroup-on-measure-space-with-atom} allows for a simplification of several of these arguments.

\subsection*{Semigroups on spaces of continuous functions}

We complete our discussion with an analysis of one-parameter semigroups on spaces of continuous functions or, more generally, on AM-spaces.
Let $L$ be a locally compact Hausdorff space and let $C_0(L)$ denote the space of real-valued continuous functions on $L$ that vanish at infinity. 
We would like to apply our results to prove strong convergence of positive semigroups on such spaces. 
Unfortunately though, the norm of the space $C_0(L)$ is in general not order continuous, so we cannot apply Theorem~\ref{thm:convergence-partial-AM-compact} nor any of its consequences from Sections~\ref{sec:consequences-i-convergence-results} and~\ref{sec:consequences-ii-spectral-theoretic-results}.
On the other hand, Theorem~\ref{thm:convergence-AM-compact} and its consequences are not particularly well-suited for this type of spaces, either: 
in the important special case where $L$ is compact, each AM-compact operator is automatically compact and for positive semigroups containing 
a compact operator one even has convergence in operator norm under very weak assumptions, cf.~\cite[Thm~4]{lotz1986}.

Still, we can derive new and non-trivial results for positive semigroups on AM-spaces by duality arguments.

\begin{theorem} \label{thm:continuous-functions-dominate-compact-operator}
	Let $\cT = (T_t)_{t \in (0,\infty)}$ be a bounded, positive and irreducible semigroup on an AM-space $E$.
	If there exists a time $s\in (0,\infty)$ and a compact operator $K \in \calL(E)$ such that $0 < K \le T_s$, 
	then $(1)_{t \in (0,\infty)}$ is the only possible unimodular eigenvalue of $\cT$.
\end{theorem}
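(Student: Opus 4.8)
The plan is to dualize and reduce to the AM-compact case already handled in Theorem~\ref{thm:eigenvalues-partial-AM-compact}. The dual space $E'$ of an AM-space $E$ is an AL-space (by Kakutani's representation theorem, isometrically lattice isomorphic to some $L^1(\Omega,\mu)$), hence a KB-space and in particular has order continuous norm. The adjoint semigroup $\cT' = (T_t')_{t \in (0,\infty)}$ is bounded and positive on $E'$, and the adjoint $K'$ of the compact operator $K$ is compact, hence AM-compact, and satisfies $0 < K' \le T_s'$. So the idea is to verify that $\cT'$ satisfies hypotheses (a) and (b) of Theorem~\ref{thm:eigenvalues-partial-AM-compact} and conclude that $(1)_{t \in (0,\infty)}$ is the only possible unimodular eigenvalue of $\cT'$; finally transfer this back to $\cT$.

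For hypothesis (a), I would use Proposition~\ref{prop:conditions-in-convergence-theorem}: since $E$ is an AM-space, it is a sublattice of some $C(L)$, so the adjoint of each $T_t$ —wait, that is the wrong direction. Instead I would observe that $E'$ is an AL-space, hence has strictly monotone norm (indeed additive on the positive cone), and rescaling we may assume $\cT$, hence $\cT'$, is contractive; then Proposition~\ref{prop:conditions-in-convergence-theorem}(a) gives that every super fixed point of $\cT'$ is a fixed point. Alternatively, and perhaps more robustly without the contractivity reduction, one can first pass to a fixed functional: by Day's fixed point theorem as in the proof of Proposition~\ref{prop:conditions-in-convergence-theorem}(c), the irreducibility of $\cT$ produces a strictly positive fixed functional $\varphi \in E'_+$ for $\cT'$; dualizing once more, the irreducibility of $\cT$ also yields (via the same Day argument applied to $\cT$, which has relatively weak${}^*$-compact dual orbits since it is bounded and $E'$ is an AL-space so bounded order intervals are weakly compact) a strictly positive $\cT$-fixed vector $y \in E_+$, i.e.\ a quasi-interior point of $E_+$. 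This will be useful below. For hypothesis (a) of Theorem~\ref{thm:eigenvalues-partial-AM-compact} applied to $\cT'$, the cleanest route is the contractivity normalization together with Proposition~\ref{prop:conditions-in-convergence-theorem}(a).

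For hypothesis (b) applied to $\cT'$, let $\psi > 0$ be a fixed point of $\cT'$. I must produce some $r > 0$ and an AM-compact $L \ge 0$ with $T_r' \ge L$ and $L\psi > 0$. The natural candidate is $L = K'$: it is compact hence AM-compact, and $0 \le K' \le T_s'$. The remaining point is $K'\psi > 0$. Here is where I expect the main obstacle, and where irreducibility of $\cT$ is essential: suppose $K'\psi = 0$. Then for every $x \in E_+$ we have $\langle K x, \psi\rangle = \langle x, K'\psi\rangle = 0$, so $Kx$ lies in the annihilator of $\psi$; since $K \ge 0$ and $\psi > 0$, this forces $Kx$ to have ``$\psi$-support zero''. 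I would argue that the closed ideal generated by the range of $K$ is $\cT$-invariant in a suitable sense — more precisely, let $y$ be the quasi-interior $\cT$-fixed vector from above; since $K > 0$ and $Ky$ — if $Ky = 0$ then $K$ vanishes on the dense ideal $E_y$ hence $K = 0$, contradiction, so $Ky > 0$. Then $Ky$ is a positive vector with $\langle Ky,\psi\rangle = \langle y, K'\psi\rangle = 0$. But $\psi$, being a positive fixed functional of an irreducible semigroup, is strictly positive by Proposition~\ref{prop:conditions-in-convergence-theorem}(c) (applied to $\cT$), so $\langle Ky,\psi\rangle > 0$ — contradiction. Hence $K'\psi > 0$, establishing (b).

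With (a) and (b) verified, Theorem~\ref{thm:eigenvalues-partial-AM-compact} applies to $\cT'$ on $E'$ and shows that $(1)_{t\in(0,\infty)}$ is the only possible unimodular eigenvalue of $\cT'$. It remains to transfer this to $\cT$. If $(\lambda_t)_{t\in(0,\infty)}$ is a unimodular eigenvalue of the complexification of $\cT$ with eigenvector $z \in E_\C \setminus\{0\}$, pick $\varphi \in E'_\C$ with $\langle z,\varphi\rangle \ne 0$; averaging $\varphi$ against the action is delicate, so instead I would argue directly: $T_t'' z'' $ — cleaner is to note that $z$, viewed in $E_\C'' \supseteq E_\C$, satisfies $T_t'' z = \lambda_t z$, so $(\lambda_t)$ is a unimodular eigenvalue of $\cT'' = (T_t'')$; but $\cT''$ is the adjoint of $\cT'$, and $E''$ is again the dual of the AL-space $E'$, so an AM-space — hmm, this loops. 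The honest fix: a unimodular eigenvalue $(\lambda_t)$ of $\cT$ with eigenvector $z$ gives, for any $\varphi$ in the fixed space of $\cT'$ — no. Simplest: since $z \ne 0$, there is $\varphi \in E'_\C$ with $\langle z, \varphi\rangle = 1$; then $\langle z, T_t'\varphi\rangle = \langle T_t z,\varphi\rangle = \lambda_t$, so the scalar function $t \mapsto \lambda_t$ is realized on $E'$ via the orbit of $\varphi$. Consider the weak${}^*$-closed convex hull of $\{\bar\lambda_t T_t'\varphi : t\}$; by the JdLG machinery underlying Theorem~\ref{thm:eigenvalues-partial-AM-compact}, or more elementarily by compactness of $K'$ and the structure of $\cT'$, one extracts that $(\bar\lambda_t)$ — this is getting long. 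I would instead simply invoke the standard fact (contained in the JdLG theory, e.g.\ \cite[Chapter~16]{eisner2015}) that unimodular eigenvalues of a bounded operator family and of its adjoint coincide in the relevant sense: $\lambda_t^n = \langle T_{nt} z, \varphi\rangle / \langle z,\varphi\rangle$ stays bounded and, since $(T_t')$ has the convergence property just proved on the closed ideal generated by $|{\rm Re}\,\varphi| + |{\rm Im}\,\varphi|$'s orbit, the sequence $(\lambda_t^n)_n$ converges, forcing $\lambda_t = 1$. I expect the transfer step and the precise duality bookkeeping to be the main technical obstacle; everything else is a direct check against Theorem~\ref{thm:eigenvalues-partial-AM-compact}.
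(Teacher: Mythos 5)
Your overall strategy -- dualise, note that $E'$ is an AL-space with order continuous norm, and feed $\cT'$ together with the AM-compact operator $K'$ into Theorem~\ref{thm:eigenvalues-partial-AM-compact} -- is exactly the paper's route. However, two of your steps have genuine gaps. First, your verification of hypothesis~(a) for $\cT'$ does not work as proposed. The ``contractivity normalization'' fails: a merely bounded semigroup can be made contractive only by passing to an equivalent norm (e.g.\ $\sup_t\norm{T_t\abs{x}}$), and such a renorming of $E$ need not leave the dual norm additive on the positive cone, so strict monotonicity of the norm of $E'$ -- the whole point of invoking Proposition~\ref{prop:conditions-in-convergence-theorem}(a) -- is lost. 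Your fallback, producing ``a strictly positive $\cT$-fixed vector $y\in E_+$ via Day's theorem applied to $\cT$'', is also unjustified: Day's theorem yields fixed \emph{functionals} in weak${}^*$-compact convex sets of the dual, not fixed vectors in $E$, and the theorem does not assume $\cT$ has any non-zero fixed vector. The object you are missing is the modulus $\abs{z}$ of the eigenvector $z\in E_\C$ of the unimodular eigenvalue under consideration: it is a super fixed point of $\cT$, hence by irreducibility and Proposition~\ref{prop:conditions-in-convergence-theorem}(c) a fixed point and a quasi-interior point of $E_+$; viewed in $E\subseteq E''$ it is a strictly positive $\cT''$-fixed functional on $E'$, and Proposition~\ref{prop:conditions-in-convergence-theorem}(b) then gives hypothesis~(a) for $\cT'$. (This same $\abs{z}$ would also repair your argument for hypothesis~(b), though there the detour is unnecessary: since $K>0$ there is some $f>0$ with $Kf>0$, and then $\applied{K'\psi}{f}=\applied{\psi}{Kf}>0$ because $\psi$ is strictly positive.)

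Second, the transfer of the eigenvalue between $\cT$ and $\cT'$, which you yourself flag as ``the main technical obstacle'' and leave unresolved, is a real gap: Theorem~\ref{thm:eigenvalues-partial-AM-compact} applied to $\cT'$ says nothing about $\cT$ unless the given eigenvalue actually lives on $E'$, and your closing argument conflates absence of unimodular eigenvalues of $\cT'$ with strong convergence of $\cT'$, which is not what that theorem asserts. The clean fix is to perform the transfer at the \emph{start}: for a unimodular eigenvalue $(\lambda_t)$ of $\cT$ with eigenvector $z$, the rescaled semigroup $(\lambda_t^{-1}T_t)_{t\in(0,\infty)}$ is still bounded and fixes $z$, so Day's fixed point theorem produces a non-zero fixed functional of its dual, i.e.\ an eigenvector of $\cT'$ for $(\lambda_t)$ (this is Lemma~\ref{lem:unimodular-eigenvalue-of-dual-semigroup}). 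Once the eigenvalue is realised on $E'$, the conclusion of Theorem~\ref{thm:eigenvalues-partial-AM-compact} for $\cT'$ immediately forces $\lambda_t=1$, and no ``transfer back'' is needed.
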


In order to prove the theorem, we make use of the following lemma.

\begin{lemma} \label{lem:unimodular-eigenvalue-of-dual-semigroup}
	Let $\cT = (T_t)_{t \in (0,\infty)}$ be a bounded one-parameter semigroup on a complex Banach space $E$. 
	Then any unimodular eigenvalue of $\cT$ is also an eigenvalue of the dual semigroup $\cT' \coloneqq (T_t')_{t \in (0,\infty)} \subseteq \calL(E')$.
\end{lemma}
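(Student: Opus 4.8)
The plan is to rescale the semigroup so that the given unimodular eigenvalue becomes the trivial one $(1)_{t}$, to produce a fixed functional for the dual of the rescaled semigroup by a fixed point argument, and then to undo the rescaling.

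First I would observe that, since $\lambda_{t+s}x = T_{t+s}x = T_t(\lambda_s x) = \lambda_s\lambda_t x$ and $x \neq 0$, the map $(0,\infty) \ni t \mapsto \lambda_t \in \T$ is a semigroup homomorphism. Hence $S_t \coloneqq \overline{\lambda_t}\,T_t$ defines a commutative semigroup representation $(S_t)_{t \in (0,\infty)}$ on $E$; it is bounded because $\norm{S_t} = \norm{T_t}$ (this is where unimodularity of $(\lambda_t)$ enters), and $S_t x = \overline{\lambda_t}\lambda_t x = x$ for all $t$, so $x$ is a common fixed vector. It therefore suffices to find a non-zero $\varphi \in E'$ with $S_t'\varphi = \varphi$ for all $t$, since then $T_t'\varphi = \lambda_t S_t'\varphi = \lambda_t\varphi$, which exhibits $(\lambda_t)_{t}$ as an eigenvalue of $\cT'$ with eigenvector $\varphi$.

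To obtain such a $\varphi$ I would argue by a fixed point theorem in $(E',\sigma(E',E))$. By Hahn--Banach pick $\psi \in E'$ with $\applied{\psi}{x} = 1$, and let $K$ be the weak${}^*$-closed convex hull of the orbit $\{S_t'\psi : t \in (0,\infty)\}$. Since $\norm{S_t'\psi} \le \big(\sup_{r>0}\norm{T_r}\big)\norm{\psi}$ for every $t$, the set $K$ lies in a ball of $E'$ and is thus weak${}^*$-compact by Banach--Alaoglu; it is clearly non-empty and convex. Each $S_t'$ is weak${}^*$-continuous (it is an adjoint operator) and affine, the $S_t'$ commute, and $S_t'S_r'\psi = (S_rS_t)'\psi = S_{r+t}'\psi$ remains in the orbit, so $S_t'K \subseteq K$. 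Thus $(S_t'|_K)_{t \in (0,\infty)}$ is a commuting family of continuous affine self-maps of a non-empty compact convex set, and the Markov--Kakutani fixed point theorem (or Day's fixed point theorem, already used in the proof of Proposition~\ref{prop:conditions-in-convergence-theorem}) provides a common fixed point $\varphi \in K$. Finally, $\varphi \neq 0$: the weak${}^*$-continuous affine functional $\eta \mapsto \applied{\eta}{x}$ takes the value $\applied{S_t'\psi}{x} = \applied{\psi}{S_t x} = \applied{\psi}{x} = 1$ on the whole orbit, hence on its closed convex hull $K$, so $\applied{\varphi}{x} = 1$.

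The only real work is verifying that $K$ satisfies the hypotheses of the fixed point theorem — boundedness and hence weak${}^*$-compactness, weak${}^*$-continuity of the $S_t'$, and the invariance $S_t'K\subseteq K$, the last being exactly where commutativity of the semigroup is used; everything else is bookkeeping. I would also note that this argument uses no continuity whatsoever in the time parameter, in keeping with the spirit of the paper — an ergodic-averaging proof would instead need measurability of $t \mapsto T_t$.
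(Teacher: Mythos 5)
Your proof is correct and follows essentially the same route as the paper: the paper likewise passes to the rescaled semigroup $((\lambda_t)^{-1}T_t)_{t\in(0,\infty)}$ to reduce to the trivial eigenvalue and then invokes Day's fixed point theorem on the weak${}^*$-closed convex hull of the orbit of a suitable functional, exactly as in the proof of Proposition~\ref{prop:conditions-in-convergence-theorem}(c). Your write-up merely spells out the compactness, invariance and non-degeneracy checks that the paper leaves implicit (using Markov--Kakutani in place of the more general Day theorem, which suffices here since the semigroup is commutative).
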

\begin{proof}
	By passing to the semigroup $((\lambda_t)^{-1}T_t)_{t \in (0,\infty)}$ we may assume that $\lambda_t = 1$ for all $t \in (0,1)$. Now the assertion follows from Day's fixed pointed theorem \cite[Theorem~3]{day1961}, just as in the proof of Proposition~\ref{prop:conditions-in-convergence-theorem}(c).
\end{proof}

\begin{proof}[Proof of Theorem~\ref{thm:continuous-functions-dominate-compact-operator}]
	Let $(\lambda_t)_{t \in (0,\infty)}$ be a unimodular eigenvalue of $\cT$. 
	Then it follows from Lemma~\ref{lem:unimodular-eigenvalue-of-dual-semigroup} that $(\lambda_t)_{t \in (0,\infty)}$ is 
	also an eigenvalue of the dual semigroup $\cT' \coloneqq (T_t')_{t \in (0,\infty)} \subseteq \calL(E')$. 
	We check that $\cT'$ satisfies all assumptions of Theorem~\ref{thm:eigenvalues-partial-AM-compact}.
	
	Since $E'$ is an AL-space, its norm is  order continuous; see Corollary~2.4.13 and the discussion below Definition~2.4.11 in~\cite{meyer1991}. 
	To see that every super fixed point of $\cT'$ is a fixed point, one argues as follows. 
	Let $z \in E_\C \setminus \{0\}$ be an eigenvector of $\cT$ for the eigenvalue $(\lambda_t)_{t \in (0,\infty)}$,
	where $E_\C$ denotes the Banach lattice complexification of $E$. 
	Then $\abs{z}$ is a super fixed point of $\cT$.
	Since $\cT$ is irreducible, $\abs{z}$ is a fixed point of $\cT$ by Proposition~\ref{prop:conditions-in-convergence-theorem}(c)
	and a quasi-interior point of $E_+$. 
	Hence, we have $\applied{\varphi}{\abs{z}} > 0$ for all $\varphi \in E'_+ \setminus \{0\}$, so $\abs{z} \in E \subseteq E''$ acts as a strictly positive functional on $E'$.
	Since $\abs{z}$ is clearly a fixed point of $(T_t'')_{t \in (0,\infty)} \subseteq \calL(E'')$, it follows from Proposition~\ref{prop:conditions-in-convergence-theorem}(b) 
	that every super fixed point of $\cT'$ is indeed a fixed point.
	
	It remains to show that assumption~(b) of Theorem~\ref{thm:eigenvalues-partial-AM-compact} is fulfilled for $\cT'$, so let $0 < \varphi \in E'$ be a fixed point of $\cT'$. 
	Then $\varphi$ is strictly positive since $\cT$ is irreducible. 
	The operator $K' \in \calL(E')$ is compact, thus AM-compact, and fulfils $0 \le K' \le T_s'$. 
	Since $K > 0$, there exists $0 < f \in E$ such that $Kf > 0$ and hence $\applied{K'\varphi}{f} = \applied{\varphi}{Kf} > 0$. Therefore, $K' \varphi > 0$
	and all assumptions of Theorem~\ref{thm:eigenvalues-partial-AM-compact} are fulfilled. This implies that $\lambda_t = 1$ for all $t \in (0,\infty)$.
\end{proof}

Note that, even in case that $\cT$ is a $C_0$-semigroup, 
the preceding proof requires results for semigroups with less time regularity since the dual semigroup $\cT'$ is in general not strongly continuous. 
The proof also demonstrates another advantage of our approach:
since the dual space of an AM-space is, in general, quite large we cannot expect the dual semigroup $\cT'$ to be irreducible, even though the semigroup $\cT$ itself is. 
Hence, the above proof only works because Theorem~\ref{thm:eigenvalues-partial-AM-compact} (respectively, the underlying Theorem~\ref{thm:convergence-partial-AM-compact}) 
is formulated in a very general version which does not require irreducibility.

Let us close the article with the following consequence of Theorem~\ref{thm:continuous-functions-dominate-compact-operator}. 
If a locally compact Hausdorff space $L$ contains an isolated point and if $E = C_0(L)$, then the assumption in Theorem~\ref{thm:continuous-functions-dominate-compact-operator} 
that $T_s$ dominate a non-zero compact operator is automatically fulfilled and we obtain the following corollary.

\begin{corollary} \label{cor:continuous-functions-isolated}
	Let $L$ be a locally compact Hausdorff space which contains an isolated point and let $\cT = (T_t)_{t \in (0,\infty)}$ be a bounded, positive and irreducible 
	semigroup on $E \coloneqq C_0(L)$. 
	Then $(1)_{t \in (0,\infty)}$ is the only possible unimodular eigenvalue of $\cT$.
\end{corollary}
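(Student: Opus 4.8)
The plan is to obtain Corollary~\ref{cor:continuous-functions-isolated} directly from Theorem~\ref{thm:continuous-functions-dominate-compact-operator}: since $E = C_0(L)$ is an AM-space and $\cT$ is already assumed bounded, positive and irreducible, the only thing left to produce is a time $s \in (0,\infty)$ together with a non-zero compact operator $K$ satisfying $0 < K \le T_s$. The isolated point of $L$ is exactly what makes such a $K$ available.

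First I would dispose of the degenerate case: if $L$ is a single point then $\dim E = 1$, a bounded positive operator on $\R$ is a non-negative scalar, and so any unimodular eigenvalue of $\cT$ (computed in the complexification) is forced to be $(1)_{t\in(0,\infty)}$. Assume henceforth $\dim E \ge 2$. Let $\ell_0 \in L$ be isolated. Then $e \coloneqq \mathds{1}_{\{\ell_0\}}$ is an atom of $C_0(L)$, the disjoint complement $I_0 \coloneqq \{e\}^\bot = \{ f \in E : f(\ell_0) = 0\}$ is a closed ideal, and --- because $\ell_0$ is isolated --- one has the direct sum decomposition $E = \R e \oplus I_0$ with associated band projection given by $Pf = f(\ell_0)\, e$. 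In particular $P$ is positive, has rank one, is idempotent, and satisfies $0 \le P \le \id_E$.

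Next I would invoke irreducibility. Since $\dim E \ge 2$, the ideal $I_0$ is both non-zero and proper (it does not contain $e$), hence it cannot be invariant under $\cT$. Therefore $T_s E \not\subseteq I_0$ for some $s \in (0,\infty)$, which is to say $K \coloneqq P T_s \neq 0$. As a composition of two positive operators, $K$ is positive; being of rank one, $K$ is compact; and from $0 \le P \le \id_E$ together with $T_s \ge 0$ we get $0 < K = P T_s \le T_s$. Thus all hypotheses of Theorem~\ref{thm:continuous-functions-dominate-compact-operator} are met, and the conclusion that $(1)_{t \in (0,\infty)}$ is the only possible unimodular eigenvalue of $\cT$ follows.

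The only point requiring genuine care --- the part I would treat as the main (and still minor) obstacle --- is checking that $\R e$ really is a projection band in $C_0(L)$, a space whose norm is not order continuous; concretely, one must verify that $f \mapsto f - f(\ell_0)e$ maps $C_0(L)$ into itself, which is precisely where the isolatedness of $\ell_0$ enters, and that $\{e\}^{\bot\bot} = \R e$, which uses local compactness and a Urysohn-type separation argument. Everything else is routine manipulation with positivity and the definition of irreducibility.
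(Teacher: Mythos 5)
Your proof is correct and follows essentially the same route as the paper, which justifies the corollary by the one-line remark that an isolated point automatically furnishes a non-zero compact (rank-one) operator dominated by some $T_s$, so that Theorem~\ref{thm:continuous-functions-dominate-compact-operator} applies; your band-projection construction $K = PT_s$ is exactly the argument the paper uses in the analogous Theorem~\ref{thm:semigroup-on-measure-space-with-atom}. The extra care you take with the degenerate one-dimensional case and with verifying that $\R e$ is a projection band in $C_0(L)$ is sound but not needed beyond what you wrote.
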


\appendix

\section{Kernel operators and AM-Compactness} \label{appendix:AM-compact}

In this appendix we recall a few facts about so-called kernel operators on Banach lattices and their relation to integral operators an $L^p$-spaces.

\begin{definition}
	\label{def:kerneloperator}
	Let $E$ and $F$ be Banach lattices such that $F$ is order complete. 
	We denote by $E'\otimes F$ the space of all finite rank operators from $E$ to $F$.
	The elements of $(E'\otimes F)^{\bot\bot}$, the band generated by $E'\otimes F$ in $\cL^r(E,F)$,
	are called \emph{kernel operators}.
\end{definition}

While this definition is rather abstract, there exists a concrete description of kernel operators on concrete function spaces.
In fact, each positive kernel operator on an $L^p$-space can be represented as an \emph{integral operator}. 
Let us first make precise what we mean by this notion:

\begin{definition}
	\label{def:integral-operator}
	Let $p,q \in [1,\infty)$, let $(\Omega_1,\mu_1)$ and $(\Omega_2,\mu_2)$ be $\sigma$-finite measure spaces
	and let $(\Omega,\mu)$ denote their product space. A bounded linear operator
	\[ T\colon L^p(\Omega_1,\mu_1)\to L^q(\Omega_2,\mu_2) \]
	is called an \emph{integral operator} if there exists a measurable function $k \colon  \Omega \to \R$ 
	such that for every $f\in L^p(\Omega_1,\mu_1)$ and for $\mu_2$-almost every $y\in \Omega_2$ 
	the function $f(\argument)k(\argument,y)$ is contained in $L^1(\Omega_1,\mu_1)$ and the equality
	\begin{align}
		\label{eqn:integralop}
		Tf = \int_{\Omega_1} f(x)k(x,\argument) \dx \mu_1(x).
	\end{align}
	holds in $L^q(\Omega_2,\mu_2)$. In this case the function $k$ (which is uniquely determined up to a nullset) 
	is called the \emph{integral kernel} of $T$.
\end{definition}

The relation between kernel operators and integral operators is described by the following proposition. 
It shows that kernel operators can be viewed as an abstract analogue of integral operators.

\begin{proposition}
	\label{prop:concrete-abstract-kernels}
	Let $p,q \in [1,\infty)$, let $(\Omega_1,\mu_1)$ and $(\Omega_2,\mu_2)$ be $\sigma$-finite measure spaces. A linear operator
	\[ T\colon L^p(\Omega_1,\mu_1)\to L^q(\Omega_2,\mu_2) \]
	is a positive kernel operator if and only if it is an integral operator whose integral kernel is positive almost everywhere on $\Omega_1 \times \Omega_2$.
\end{proposition}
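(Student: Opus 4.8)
The plan is to reduce the proposition to the classical identification, essentially due to Bukhvalov, of the abstract kernel operators on $\sigma$-finite $L^p$-spaces with the concrete integral operators of Definition~\ref{def:integral-operator}, and then to add the routine positivity refinement on top. The structural input I would invoke is: for $p,q \in [1,\infty)$ and $\sigma$-finite $(\Omega_1,\mu_1)$, $(\Omega_2,\mu_2)$, an operator $T \in \cL^r(L^p(\Omega_1,\mu_1), L^q(\Omega_2,\mu_2))$ lies in the band $(E'\otimes F)^{\bot\bot}$ if and only if it is an integral operator, and the assignment $T \mapsto k$ sending an integral operator to its (almost everywhere unique) kernel is a lattice isomorphism onto an order ideal of measurable functions on $\Omega_1\times\Omega_2$, the function side carrying the pointwise almost everywhere order. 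References for this are \cite[Sec.~3.3]{meyer1991} and \cite{aliprantis1978}. Granting it, the proposition is immediate: a positive operator is exactly one with $T = \abs{T}$, which under the isomorphism translates into $k = \abs{k}$ almost everywhere, i.e.\ $k \ge 0$ almost everywhere; conversely $k \ge 0$ gives $k = \abs{k}$, hence $T = \abs{T} \ge 0$.

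If a more self-contained argument is wanted, I would establish the two directions separately. Suppose first that $T$ is the integral operator associated with a measurable kernel $k \ge 0$; then $T \ge 0$ is clear, and it remains to see that $T$ is a kernel operator. Using $\sigma$-finiteness, choose simple functions $0 \le k_n \uparrow k$ pointwise, each supported on a set of finite $(\mu_1\otimes\mu_2)$-measure. Then each $k_n$ is a finite positive combination of indicators $\mathbbm{1}_{A\times B}$ of finite-measure rectangles, and $\mathbbm{1}_{A\times B}$ is the kernel of the rank-one operator $f \mapsto \bigl(\int_A f\dx\mu_1\bigr)\mathbbm{1}_B \in E'\otimes F$; hence the integral operator $T_n$ with kernel $k_n$ lies in $E'\otimes F$ and satisfies $0 \le T_n \le T$. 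By monotone convergence $T_n f \uparrow Tf$ in $L^q$ for each $f \in E_+$ (here $q < \infty$, so the norm of $L^q$ is order continuous), and since $\cL^r(E,F)$ is order complete this forces $\sup_n T_n = T$. As bands are order closed, $T \in (E'\otimes F)^{\bot\bot}$.

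Conversely, suppose $0 \le T \in (E'\otimes F)^{\bot\bot}$. Every finite-rank operator between these spaces is an integral operator, its kernel being a finite sum of products $\phi(x)g(y)$ with $\phi \in (L^p)' = L^{p'}(\Omega_1,\mu_1)$ and $g \in L^q(\Omega_2,\mu_2)$, so that \eqref{eqn:integralop} holds by Hölder's inequality; and the integral operators form a band in $\cL^r(E,F)$. This last fact — the ideal property together with order closedness — is the genuinely substantial input, again Bukhvalov's theorem, and it yields that $T$ is an integral operator, say with kernel $k$. That $k \ge 0$ almost everywhere then follows either from the compatibility of the kernel correspondence with the modulus ($\abs{T} = T$ has kernel $\abs{k}$, so $k = \abs{k}$ a.e.\ by uniqueness) or, directly, by testing $Tf \ge 0$ against $f = \mathbbm{1}_A$ for finite-measure $A$ and using Tonelli to obtain $\int_{A\times B}k \ge 0$ for all finite-measure rectangles, whence $k \ge 0$ a.e.

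The main obstacle is therefore not the positivity bookkeeping, which is elementary, but the structural core I am importing: that the integral operators on $\sigma$-finite $L^p$-spaces are precisely the band $(E'\otimes F)^{\bot\bot}$ and that passing to kernels is a lattice isomorphism. Proving this from scratch would require the absolute-continuity characterisation of integral operators (an operator $T$ is an integral operator iff it maps order-bounded sequences converging to $0$ in measure to sequences converging almost everywhere to $0$) together with a careful dominated-convergence argument, which is why I would cite it rather than reproduce it.
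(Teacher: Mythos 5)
Your proposal is correct and takes essentially the same route as the paper: the paper's entire proof is a citation of \cite[Prop IV 9.8]{schaefer1974}, which is precisely the Bukhvalov-type identification of the band $(E'\otimes F)^{\bot\bot}$ with the integral operators that you import, together with the same elementary positivity bookkeeping. Your additional sketch of the two directions (approximation by simple kernels for one inclusion, the band property of integral operators for the other) is a sound elaboration of what that citation contains.
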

\begin{proof}
	See \cite[Prop IV 9.8]{schaefer1974}.
\end{proof}

In \cite[Prop IV 9.8]{schaefer1974} the reader can also find a similar result about non-positive 
kernel operators and information about the case where $p$ or $q$ equals $\infty$. For more information
about kernel operators and integral operators we refer to \cite{schep1977, vietsch1979}.

The following proposition explains why our main results from Section~\ref{sec:convergence} are applicable to semigroups containing a kernel operator.

\begin{proposition}
	\label{prop:kernel-AM-compact}
	Let $E$ and $F$ be Banach lattices where the norm on $F$ is order continuous.
	Then every kernel operator $T\in (E\otimes F)^{\bot\bot}$ is AM-compact.
\end{proposition}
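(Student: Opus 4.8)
The plan is to realise the regular AM-compact operators from $E$ to $F$ as a \emph{band} in $\cL^r(E,F)$ which contains all finite rank operators, and then to invoke Definition~\ref{def:kerneloperator} directly. Since the norm on $F$ is order continuous, $F$ is Dedekind complete, so $\cL^r(E,F)$ is a Dedekind complete vector lattice by \cite[Thm 1.3.2]{meyer1991}; in particular the band $(E'\otimes F)^{\bot\bot}$ is well-defined. Write $\cK$ for the set of all regular AM-compact operators $E\to F$. It suffices to show that $\cK$ is a band containing $E'\otimes F$, for then $\cK$ contains the smallest band over $E'\otimes F$, which is exactly $(E'\otimes F)^{\bot\bot}$.

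The inclusion $E'\otimes F\subseteq\cK$ is straightforward: a finite rank operator $R$ is bounded, hence maps each order interval of $E$ --- which is norm-bounded --- to a bounded, and therefore relatively compact, subset of the finite dimensional space $R(E)$; and $R$ is regular, since a rank-one operator $\varphi\otimes y$ equals the combination $\varphi^{+}\otimes y^{+}-\varphi^{+}\otimes y^{-}-\varphi^{-}\otimes y^{+}+\varphi^{-}\otimes y^{-}$ of positive operators. Next I would check that $\cK$ is an order-closed linear subspace of $\cL^r(E,F)$. Linearity is routine because $(S+T)[x,y]\subseteq S[x,y]+T[x,y]$ and the sum of two relatively compact subsets of a Banach space is relatively compact. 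For order-closedness, suppose $0\le T_\alpha\uparrow T$ in $\cL^r(E,F)$ with each $T_\alpha\in\cK$, and fix $f\in E_+$. Suprema of increasing order-bounded nets in $\cL^r(E,F)$ are computed pointwise, so $T_\alpha f\uparrow Tf$ in $F$ and hence $\norm{T_\alpha f-Tf}\to 0$ by order continuity of the norm on $F$. Since $0\le (T-T_\alpha)g\le (T-T_\alpha)f$ for every $g\in[0,f]$, monotonicity of the norm gives $\sup_{g\in[0,f]}\norm{(T-T_\alpha)g}\le\norm{(T-T_\alpha)f}\to 0$; thus $T[0,f]$ lies within every $\eps$-neighbourhood of some totally bounded set $T_\alpha[0,f]$ and is therefore totally bounded. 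Hence $T\in\cK$.

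The main obstacle is the remaining property, that $\cK$ is \emph{solid}: if $0\le S\le T$ in $\cL^r(E,F)$ and $T$ is AM-compact, then $S$ is AM-compact. This is a Dodds--Fremlin-type domination result, and the order continuity of the norm on $F$ is precisely what makes it work. Rather than reproving it I would quote it: for operators on a single Banach lattice this is \cite[Prop 3.7.2]{meyer1991}, which is already used in the proof of Lemma~\ref{lem:singularpartconvergence}, and the argument there passes verbatim to operators between two Banach lattices as soon as $F$ is Dedekind complete with order continuous norm. A self-contained proof would need the disjoint-sequence approximation machinery underlying the Dodds--Fremlin theorem, which is standard but too lengthy to reproduce here.

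Putting the pieces together, $\cK$ is an order-closed solid linear subspace of $\cL^r(E,F)$, that is, a band, and it contains $E'\otimes F$; consequently $(E'\otimes F)^{\bot\bot}\subseteq\cK$, so every kernel operator is AM-compact.
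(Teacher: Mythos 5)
Your proposal is correct and follows essentially the same route as the paper: the paper's proof is simply a citation of \cite[Cor 3.7.3]{meyer1991}, and your argument is precisely the standard proof of that corollary --- finite rank operators are regular and AM-compact, and the regular AM-compact operators form a band by the Dodds--Fremlin-type domination result \cite[Prop 3.7.2]{meyer1991}, which is also the key external input you (correctly) do not reprove.
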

\begin{proof}
	See \cite[Cor 3.7.3]{meyer1991}.
\end{proof}

Propositions~\ref{prop:concrete-abstract-kernels} and~\ref{prop:kernel-AM-compact} together show 
that every positive integral operator on $L^p$ is AM-compact. 
For the convenience of the reader we include a more direct proof of this result in the following proposition.

\begin{proposition}
	\label{prop:integraloperatorsAMcompact}
	Let $(\Omega_1,\mu_1)$ and $(\Omega_2,\mu_2)$ be $\sigma$-finite measure spaces
	and $p,q \in [1,\infty)$.
	Let $T\colon L^p(\Omega_1,\mu_1) \to L^q(\Omega_2,\mu_2)$ be a positive 
	integral operator with integral kernel $k \colon \Omega_1\times \Omega_2 \to \R$ according to Definition~\ref{def:integral-operator}.
	Then $T$ is AM-compact.
\end{proposition}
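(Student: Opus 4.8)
The plan is to approximate $T$, uniformly on each order interval, by integral operators with truncated kernels whose action on order intervals is visibly relatively compact. First I would reduce to showing that $T[0,g]$ is relatively compact for every $g\in L^p(\Omega_1,\mu_1)_+$: an arbitrary order interval equals $u+[0,v-u]$ for some $u\le v$, and since $T$ is bounded and relative compactness is translation invariant, $T[u,v]=Tu+T[0,v-u]$ is relatively compact as soon as $T[0,v-u]$ is. So fix $g\ge 0$ from now on.

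\emph{Truncating the kernel.} By $\sigma$-finiteness I would pick increasing measurable sets $A_n\uparrow\Omega_1$, $B_n\uparrow\Omega_2$ of finite measure and set $k_n\coloneqq\min\{k,n\}\cdot\mathds{1}_{A_n\times B_n}$, so that $0\le k_n\uparrow k$ pointwise a.e. Let $T_n$ be the integral operator with kernel $k_n$; since $k_n$ is bounded with support of finite measure, an elementary estimate shows that $T_n$ is a well-defined bounded operator from $L^p(\Omega_1,\mu_1)$ to $L^q(\Omega_2,\mu_2)$. As $k-k_n\ge 0$, the operator $T-T_n$ is positive, so for $0\le f\le g$ one has $0\le(T-T_n)f\le(T-T_n)g$, hence $\norm{(T-T_n)f}_q\le\norm{(T-T_n)g}_q=\norm{Tg-T_ng}_q$. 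Monotone convergence gives $T_ng\uparrow Tg$ a.e., and since $0\le Tg-T_ng\le Tg\in L^q$ and the norm of $L^q$ is order continuous ($q<\infty$), we obtain $\norm{Tg-T_ng}_q\to 0$. Thus $\sup_{f\in[0,g]}\norm{(T-T_n)f}_q\to 0$, i.e.\ the operators $T_n$ converge to $T$ uniformly on $[0,g]$.

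\emph{Compactness of the truncations.} It then suffices to prove that each set $T_n[0,g]$ is relatively compact, because a uniform limit of relatively compact sets is relatively compact (given $\eps>0$, choose $n$ with $\sup_{f\in[0,g]}\norm{(T-T_n)f}_q<\eps/2$ and cover $T_n[0,g]$ by finitely many $\eps/2$-balls). For this I would use that $[0,g]$ is weakly compact in $L^p$ (a Banach lattice with order continuous norm has weakly compact order intervals), hence weakly sequentially compact by the Eberlein--\v{S}mulian theorem, together with the fact that $T_n$ carries weakly convergent sequences in $[0,g]$ to norm convergent sequences: if $f_j\rightharpoonup f$ in $L^p$ with $f_j\in[0,g]$, then for a.e.\ $y$ the slice $k_n(\argument,y)$, being bounded with support of finite measure, lies in $L^{p'}(\Omega_1,\mu_1)=(L^p)'$, so $T_nf_j(y)=\applied{f_j}{k_n(\argument,y)}\to\applied{f}{k_n(\argument,y)}=T_nf(y)$; since $0\le T_nf_j\le T_ng\in L^q$, dominated convergence yields $\norm{T_nf_j-T_nf}_q\to 0$. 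Combining these facts, every sequence in $T_n[0,g]$ has a norm convergent subsequence, so $T_n[0,g]$ is relatively compact, which completes the argument.

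\emph{Where the difficulty lies.} The only delicate point is the case $p=1$, where $L^p$ is neither reflexive nor necessarily separable, so one cannot invoke reflexivity for the weak compactness of $[0,g]$ nor metrizability of the weak topology for a sequential argument. Both are handled above: weak compactness of order intervals holds in every Banach lattice with order continuous norm (in particular in $L^1$, by the Dunford--Pettis criterion), and Eberlein--\v{S}mulian bridges the gap between sequential and topological compactness. The truncation step is arranged precisely so that the dominated convergence argument never touches the slices $k(\argument,y)$ of the original kernel --- which need not be $L^{p'}$-functions in any obvious way --- but only the trivially integrable slices $k_n(\argument,y)$.
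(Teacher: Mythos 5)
Your proof is correct, and it rests on exactly the same three ingredients as the paper's: truncating the kernel so that its slices $k_n(\argument,y)$ land in $L^{p'}(\Omega_1,\mu_1)$, weak (sequential) compactness of order intervals in $L^p$ via order continuity of the norm and Eberlein--\v{S}mulian, and two applications of dominated convergence. The difference is purely organizational, but it is a genuine and arguably cleaner repackaging. The paper fixes a sequence $(f_n)\subseteq[0,f]$ with weak limit $g$, fixes a good point $y$, and runs a $3\eps$-estimate in which the truncation level $m$ is chosen \emph{depending on $y$} (via dominated convergence applied to $\int f(x)(k(x,y)-k_m(x,y))\dx\mu_1(x)$); this yields pointwise a.e.\ convergence $(Tf_n)(y)\to(Tg)(y)$, which is then upgraded to $L^q$-convergence by domination. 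You instead promote the approximation to the operator level: a single truncation $T_n$ approximates $T$ uniformly on the whole order interval $[0,g]$ in $L^q$-norm (using positivity of $T-T_n$ and order continuity of the $L^q$-norm), each $T_n$ maps $[0,g]$ onto a relatively compact set, and a uniform limit of relatively compact sets is relatively compact. What your version buys is a cleaner separation of concerns --- the measure-theoretic bookkeeping about exceptional null sets is confined to the elementary operators $T_n$, and the compactness transfer becomes a one-line total-boundedness argument --- at the modest cost of having to verify that the truncated operators are bounded $L^p\to L^q$, which the paper's pointwise route never needs.
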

\begin{proof}
	First note that since $T$ is positive, the kernel function $k$ is positive almost everywhere by \cite[Prop 3.3.1]{meyer1991}.
	Let $0\leq f \in L^p(\Omega_1,\mu_1)$ and $(f_n)\subseteq [0,f]$.
	After passing to a subsequence, we may assume that $(f_n)$ converges weakly to some $g\in [0,f]$ by the order continuity of the $L^p$-norm.
	As $\Omega_1\times\Omega_2$ is $\sigma$-finite, we find a monotonically increasing
	sequence $(k_m)$ of positive and simple functions on $\Omega_1\times\Omega_2$ such that $(\mu_1\otimes\mu_2)(\{k_m > 0\}) <\infty$ 
	and  $\lim k_m(x,y) = k(x,y)$ almost everywhere. Since, in addition, each $k_m$ is bounded, we have that
	$k_m(\argument,y) \in L^{p'}(\Omega_1,\mu_1)$ for each $m\in\N$ and almost every $y\in \Omega_2$, where $\frac{1}{p}+\frac{1}{p'} = 1$. Hence, 
	\begin{align}
	\label{eqn:fntog}
	\lim_{n\to\infty} \int_{\Omega_1} f_n(x)k_m(x,y) \dx\mu_1(x) = \int_{\Omega_1} g(x) k_m(x,y) \dx\mu_1(x) 
	\end{align}
	for almost every $y\in \Omega_2$ by the definition of $g$.

	Now fix $y\in \Omega_2$ such that $k_m(\argument,y) \in L^{p'}(\Omega_1,\mu_1)$, 
	$\lim k_m(x,y) = k(x,y)$ for $\mu_1$-almost every $x\in \Omega_1$ and such that 
	$(Tg)(y)$, $(Tf)(y)$ and all $(Tf_n)(y)$ are given according to \eqref{eqn:integralop}. Let $\eps>0$.
	By dominated convergence we find $m\in\N$ such that 
	\[ 0\leq \int_{\Omega_1} f(x)\big(k(x,y) - k_m(x,y)\big) \dx\mu_1(x) \leq \eps \]
	and hence, since $g,f_n \in [0,f]$, this implies that
	\[  0\leq \int_{\Omega_1} g(x) \big(k(x,y)-k_m(x,y)\big) \dx\mu_1(x)\leq \eps \]
	and
	\[  0\leq \int_{\Omega_1} f_n(x) \big(k(x,y)-k_m(x,y)\big) \dx\mu_1(x)\leq \eps \]
	for all $n\in\N$. Now pick $n_0\in \N$ such that $\abs{\applied{f_n-g}{k_m(\argument,y)}}< \eps$ for all $n\geq n_0$.
	Then we obtain that
	\begin{align*}
		&\abs{(Tf_n)(y) - (Tg)(y)} = \abs*{\int_{\Omega_1} \big(f_n(x)-g(x)\big) k(x,y) \dx\mu_1(x) } \\
		&\quad = \abs[\Bigg]{\int_{\Omega_1} f_n(x) \big(k(x,y)-k_m(x,y)\big)\dx\mu_1(x) - \int_{\Omega_1} \big(g(x)-f_n(x)\big) k_m(x,y) \dx\mu_1(x)  \\
		&\qquad - \int_{\Omega_1} g(x) \big(k(x,y)-k_m(x,y)\big)\dx\mu_1(x)}\\
		&\quad \leq \int_{\Omega_1} f_n(x) \big(k(x,y)-k_m(x,y)\big)\dx\mu_1(x) + \abs*{\int_{\Omega_1} \big(g(x)-f_n(x)\big) k_m(x,y) \dx\mu_1(x) } \\
		&\qquad + \int_{\Omega_1} g(x) \big(k(x,y)-k_m(x,y)\big)\dx\mu_1(x) \leq 3\eps
	\end{align*}
	for all $n\geq n_0$. Since this holds for almost all $y\in \Omega_2$, we 
	actually proved that $(Tf_n)$ converges to $Tg$ pointwise almost everywhere.
	As $(Tf_n)\subseteq [0,Tf]$ it follows from the dominated convergence theorem that $(Tf_n)$ converges to $Tg$ in
	$L^q$-norm. This readily shows that $T[0,f]$ is compact. Since $f$ was arbitrary, $T$ is AM-compact.
\end{proof}

\section{A few facts from group theory} \label{appendix:group-theory}

In this appendix we recall a few facts from group theory which are used in the main text. As said in the preliminaries, for a group $G$ and $A \subseteq G$ we denote by $\langle A \rangle$ the smallest subgroup of $G$ containing $A$. 
Note that if all elements of $A$ commute, then $\langle A \rangle$ is commutative.
This follows from that fact that if $a \in G$ commutes with $b \in G$, then $a$ commutes with $b^{-1}$ and $a^{-1}$ commutes with $b$. We adopt to the convention of denoting the binary operation in commutative groups by `$+$'.

For a normal subgroup $N$ of a group $G$ the \emph{index} of $N$ in $G$ is defined to be the cardinality of the quotient group $G/N$. We call a subgroup of $G$ proper if it does not coincide with $G$.

\begin{proposition} \label{prop:divisible-groups}
	For a group $G$ consider the following statements:
	\begin{enumerate}[\upshape (i)]
		\item The group $G$ is divisible.
		\item Every proper normal subgroup of $G$ has infinite index.
		\item Every group homomorphism from $G$ to any finite group is trivial.
		\item Every group action of $G$ on a finite set is trivial.
	\end{enumerate}
	Then {\upshape(i) $\Rightarrow$ (ii) $\Leftrightarrow$ (iii) $\Leftrightarrow$ (iv)}. If, in addition, $G$ is commutative, then also {\upshape(ii) $\Rightarrow$ (i)}.
\end{proposition}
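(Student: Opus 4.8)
The plan is to prove the elementary implications $\text{(i)}\Rightarrow\text{(ii)}$ and $\text{(ii)}\Leftrightarrow\text{(iii)}\Leftrightarrow\text{(iv)}$ first, and then, under the additional hypothesis that $G$ is commutative, the implication $\text{(ii)}\Rightarrow\text{(i)}$, which carries essentially all the content.

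For $\text{(i)}\Rightarrow\text{(ii)}$ I would argue by contraposition. If $N$ is a proper normal subgroup of finite index, then $G/N$ is a nontrivial finite group and, being a homomorphic image of the divisible group $G$, it is itself divisible; but in a finite group of order $n$ every element $s$ satisfies $s^n=e$ by Lagrange's theorem, so divisibility with exponent $n$ forces $G/N$ to be trivial --- a contradiction. The equivalences $\text{(ii)}\Leftrightarrow\text{(iii)}\Leftrightarrow\text{(iv)}$ rest on the standard dictionary between finite-index normal subgroups, homomorphisms into finite groups, and actions on finite sets. Concretely: a homomorphism $\varphi\colon G\to H$ with $H$ finite has kernel a normal subgroup of index $\abs{\varphi(G)}\le\abs{H}$, while the quotient map $G\to G/N$ is such a homomorphism for any finite-index normal $N$; an action of $G$ on a finite set $X$ is by definition a homomorphism $G\to\operatorname{Sym}(X)$, so $\text{(iii)}$ immediately yields $\text{(iv)}$; and conversely a homomorphism $\varphi\colon G\to H$ into a finite group induces the action $g\cdot h\coloneqq\varphi(g)h$ of $G$ on $H$, which is trivial precisely when $\varphi$ is (evaluate at $h=e$), so $\text{(iv)}$ yields $\text{(iii)}$.

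The substantive step is $\text{(ii)}\Rightarrow\text{(i)}$ for commutative $G$, again by contraposition: suppose $G$ is not divisible, so there is $n\ge 2$ with $nG\subsetneq G$. Then $A\coloneqq G/nG$ is a nontrivial abelian group with $nA=0$, and the preimage in $G$ of any finite-index proper subgroup of $A$ is a finite-index proper subgroup of $G$ (automatically normal), contradicting $\text{(ii)}$. Thus everything reduces to the following lemma: \emph{a nontrivial abelian group $A$ with $nA=0$ for some $n\ge 1$ has a subgroup of finite index $>1$.} I would prove this by induction on $n$. Fix a prime $p\mid n$. If $pA\neq A$, then $A/pA$ is a nonzero $\Z/p\Z$-vector space, hence has a subspace of codimension one, whose preimage in $A$ has index $p$. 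If $pA=A$, then $p^aA=A$ where $p^a$ is the exact power of $p$ dividing $n$, so writing $n=p^am$ with $p\nmid m$ we get $mA=0$; here the value $m=1$ would force $A=mA=0$, so $1<m<n$ and the induction hypothesis applies to $A$ with $m$ in place of $n$.

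This second case --- where multiplication by $p$ is already surjective on $A$, so that passing to $A/pA$ yields nothing and one genuinely has to reduce $n$ --- is the only subtle point, and I expect the lemma to be the main obstacle, the rest being routine. (Alternatively, the lemma follows immediately from the Pr\"ufer--Baer structure theorem for abelian groups of bounded exponent by projecting onto a nontrivial cyclic summand, but the short induction above avoids invoking that result.)
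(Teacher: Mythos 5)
Your proof is correct and follows essentially the same route as the paper: the elementary implications are handled by the standard dictionary between finite-index normal subgroups, finite quotients and actions on finite sets, and the substantive implication (ii) $\Rightarrow$ (i) reduces, as in the paper, to exhibiting a codimension-one subspace of a nonzero vector space over $\Z/p\Z$ and pulling it back. The only difference is that the induction in your lemma is avoidable: if a commutative group $G$ is not divisible, then already $pG \neq G$ for some \emph{prime} $p$ (since $pG = G$ for all primes $p$ would give $nG = G$ for all $n \in \N$ by factoring $n$), so one may take $A = G/pG$ directly and your second case, where multiplication by the chosen prime is surjective, never arises --- this is exactly the shortcut the paper takes.
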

\begin{proof}
	The proofs of the equivalences (ii) $\Leftrightarrow$ (iii) $\Leftrightarrow$ (iv) are straightforward, so we omit them. If~(i) holds and $H \subseteq G$ is a normal subgroup of index $n \in \N$, then we can find for each $t \in G$ an element $s \in G$ such that $s^n = t$; this yields $tH = s^nH = H$, so $H = G$.
	
	It remains to show (iii) $\Rightarrow$ (i) in case $G=(G,+)$ is commutative. If $G$ is not divisible, 
	we can find a prime number $p \in \N$ and an element of $G$ which cannot be divided by $p$. 
	Therefore, the subgroup $H \coloneqq \{p t:  t \in G\}$ of $G$ is proper.
	
	Since the order of every element of $G/H$ is a divisor of $p$,
	every non-trivial element of $G/H$ has order $p$.  Hence, the mapping
	\begin{align*}
		\Z/p\Z \times G/H \to G/H, \quad (n + p\Z, t+H) \mapsto nt+H.
	\end{align*}
	is well-defined and, with respect to this scalar multiplication and the addition on $G/H$, $G/H$ is a vector space of dimension at least $1$ over the field $\Z/ p \Z$.
	
	We choose a subspace $V$ of $G/H$ of co-dimension $1$. Then the factor group $(G/H)/V$ is isomorphic to $\Z/p \Z$, so there exists a surjective group homomorphism $G \to G/H \to \Z / p \Z$, meaning that (iii) fails.
\end{proof}

For the purpose of reference in the main text we also state the following proposition explicitly, although we omit its simple proof.

\begin{proposition} \label{prop:generating-semigroup}
	Let $G, H$ be groups and assume that $G$ is commutative and generated by a subsemigroup $S \subseteq G$. Then every semigroup homomorphism $\varphi: S \to H$ can be extended in a unique way to a group homomorphism $\psi\colon G \to H$.
\end{proposition}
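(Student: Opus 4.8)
The plan is to realise every element of $G$ as a ``difference'' of two elements of $S$ and to define the extension by that recipe. We may assume $S \neq \emptyset$, since otherwise $G = \langle S \rangle$ is trivial and the claim is immediate. Fix $s_0 \in S$ and put $T \coloneqq \{ a - b : a,b \in S \} \subseteq G$ (writing the operation on the commutative group $G$ additively, and that on $H$ multiplicatively). First I would check that $T$ is a subgroup of $G$ containing $S$: one has $s = (s + s_0) - s_0 \in T$ for each $s \in S$ because $s + s_0 \in S$; moreover $(a-b) + (c-d) = (a+c) - (b+d) \in T$ and $-(a-b) = b - a \in T$, where commutativity of $G$ is used to collect the ``positive'' and ``negative'' parts. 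Hence $T$ is a subgroup containing $S$, so $T \supseteq \langle S \rangle = G$, i.e.\ every $g \in G$ has the form $g = a - b$ with $a,b \in S$.

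Next I would define $\psi \colon G \to H$ by $\psi(a - b) \coloneqq \varphi(a)\varphi(b)^{-1}$ and verify that this is well defined. If $a - b = c - d$ in $G$ with $a,b,c,d \in S$, then $a + d = c + b$ in $G$; since $a+d$ and $c+b$ both lie in $S$, this is an equality in $S$, and applying $\varphi$ gives $\varphi(a)\varphi(d) = \varphi(c)\varphi(b)$ in $H$. The key observation is that $S$, being a subsemigroup of the commutative group $G$, is itself commutative, so $\varphi(S)$ is a commutative subset of $H$; thus $\varphi(a), \varphi(b), \varphi(c), \varphi(d)$ pairwise commute, and one may rearrange $\varphi(a)\varphi(d) = \varphi(c)\varphi(b)$ into $\varphi(a)\varphi(b)^{-1} = \varphi(c)\varphi(d)^{-1}$, as required. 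This use of the commutativity of $\varphi(S)$ is the only point requiring care; the rest is routine bookkeeping.

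Finally I would check that $\psi$ is a group homomorphism, that it extends $\varphi$, and that it is unique. For the homomorphism property, $\psi\bigl((a-b) + (c-d)\bigr) = \varphi(a+c)\varphi(b+d)^{-1} = \varphi(a)\varphi(c)\varphi(b)^{-1}\varphi(d)^{-1}$, and commuting the two middle factors inside $\varphi(S)$ turns this into $\psi(a-b)\,\psi(c-d)$. For the extension property, $\psi(s) = \psi\bigl((s+s_0) - s_0\bigr) = \varphi(s+s_0)\varphi(s_0)^{-1} = \varphi(s)\varphi(s_0)\varphi(s_0)^{-1} = \varphi(s)$ for all $s \in S$. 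For uniqueness, any group homomorphism $\psi'$ with $\psi'|_S = \varphi$ must satisfy $\psi'(a-b) = \psi'(a)\psi'(b)^{-1} = \varphi(a)\varphi(b)^{-1} = \psi(a-b)$ for all $a,b \in S$, so $\psi' = \psi$ on $G = T$. The main obstacle is thus nothing more than keeping track of the commutativity of $\varphi(S)$ while manipulating products in the possibly non-abelian group $H$.
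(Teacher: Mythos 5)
Your proof is correct: the paper deliberately omits the proof of this proposition as ``simple'', and your argument is exactly the standard one (write each $g \in G$ as $a-b$ with $a,b \in S$, set $\psi(a-b)=\varphi(a)\varphi(b)^{-1}$, and use that $\varphi(S)$ is a commutative subset of $H$ to justify well-definedness and the homomorphism property). The care you take with commutativity inside the possibly non-abelian $H$ is exactly the one point that needs checking, and you handle it correctly.
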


\bibliographystyle{abbrv}
\bibliography{literature}

\end{document}